\numberwithin{equation}{section}
\newcommand{\tabcaption}{\def\@captype{table}\caption}
\newtheorem{lem}{Lemma}[section]
\newtheorem{thm}{Theorem}[section]
\newtheorem{rem}{Remark}[section]
\newtheorem{exmp}{Example}[section]
\title{EXtended HDG methods for  second order elliptic interface problems\thanks
	{
		This work was supported by  National Natural Science Foundation of China (11771312).
}}
\author{
Yihui Han\thanks{School of Mathematics, Sichuan University, Chengdu, 610064, China, (Email: yihui$\_$han@qq.com)},
Huangxin Chen\thanks{School of Mathematical Sciences and Fujian Provincial Key Laboratory on Mathematical Modeling and High Performance Scientific Computing, Xiamen University, Fujian, 361005, China, (Email: chx@xmu.edu.cn)},
Xiao-Ping Wang\thanks{Department of Mathematics, The Hong Kong University of Science and Technology, Clear Water Bay, Kowloon, Hong Kong, China, (Email: mawang@ust.hk)},
  Xiaoping Xie \thanks{School of Mathematics, Sichuan University, Chengdu, 610064, China, (Email: xpxie@scu.edu.cn)}}
\date{}
\begin{document}
	\maketitle	
\begin{abstract}
	In this paper, we propose  two    arbitrary order  eXtended hybridizable Discontinuous Galerkin (X-HDG) methods for   second order elliptic interface problems in two and three dimensions. The first X-HDG method applies to any piecewise $C^2$ smooth interface. It  uses piecewise polynomials of degrees $k$ $(k\geq 1)$ and $k-1$  respectively  for the potential and flux approximations in the interior  of elements   inside the subdomains,   and  piecewise polynomials of degree $  k$  for the numerical traces of potential on the inter-element boundaries  inside the subdomains.    Double value numerical traces on the  parts of  interface   inside elements are adopted to deal with the jump condition. The second X-HDG method is a modified version of the first one and applies to any fold line/plane interface, which uses piecewise polynomials of degree $  k-1$ for the numerical traces of   potential.  The X-HDG methods are of the  local elimination property, then  lead to reduced systems  which only involve the  unknowns of   numerical traces of potential on the inter-element boundaries and the interface. Optimal error estimates are derived for the flux approximation in $L^2$ norm and for the potential approximation in piecewise $H^1$ seminorm without requiring   ``sufficiently large” stabilization parameters in the schemes. In addition, error estimation for  the potential approximation  in $L^2$ norm is performed using dual arguments.   Finally, we provide several numerical examples   to verify the theoretical results.
	
	\bigskip
	
\noindent{\it Key words}: eXtended, HDG, elliptic interface problem, discontinuous coefficients, high order, optimal error estimates.
\end{abstract}
\section{Introduction} 
Elliptic interface problems are widely used in many multi-physics problems and multiphase applications in science computing and engineering \cite{chen2011mibpb,hadley2002high,hesthaven2003high,layton2009using,hou1997hybrid,zhao2010high}. In this paper, we consider the following second order elliptic interface problem: find $u$ satisfying
\begin{align}
\label{pb1}
\left \{
\begin{array}{rl}
- \nabla\cdot(\alpha\nabla u) = f &   {\rm in} \ \Omega_1\cup\Omega_2,\\
u =  g&  {\rm on} \  \partial\Omega,\\
\llbracket u \rrbracket=  g_D, \ \llbracket \alpha\frac{\partial u}{\partial \bm{n}}\rrbracket =  g_N  & {\rm on} \  \Gamma.
\end{array}
\right.
\end{align}
Here $\Omega \subset \mathbb{R}^d$ $(d = 2, 3) $ is a   polygonal/polyhedral domain, which is divided into two subdomains, $\Omega_i$ $(i = 1,2)$,  by a piecewise $C^2$ smooth interface $\Gamma$ (see Figure \ref{domain}).
The coefficient  $\alpha$ is piecewise constant with $\alpha|_{\Omega_i}=\alpha_i>0$ for $i=1,2$. The jump of a function $w$ across the interface $\Gamma$ is defined by $ \llbracket  w  \rrbracket = (w|_{\Omega_{1}})|_\Gamma-(w|_{\Omega_{2}})|_\Gamma$, and $\bm{n}$ denotes the unit normal vector along $\Gamma$ pointing to $\Omega_{2}$.

 \begin{figure}[htp]	
	\centering
	\includegraphics[height = 5.5 cm,width= 6.5 cm]{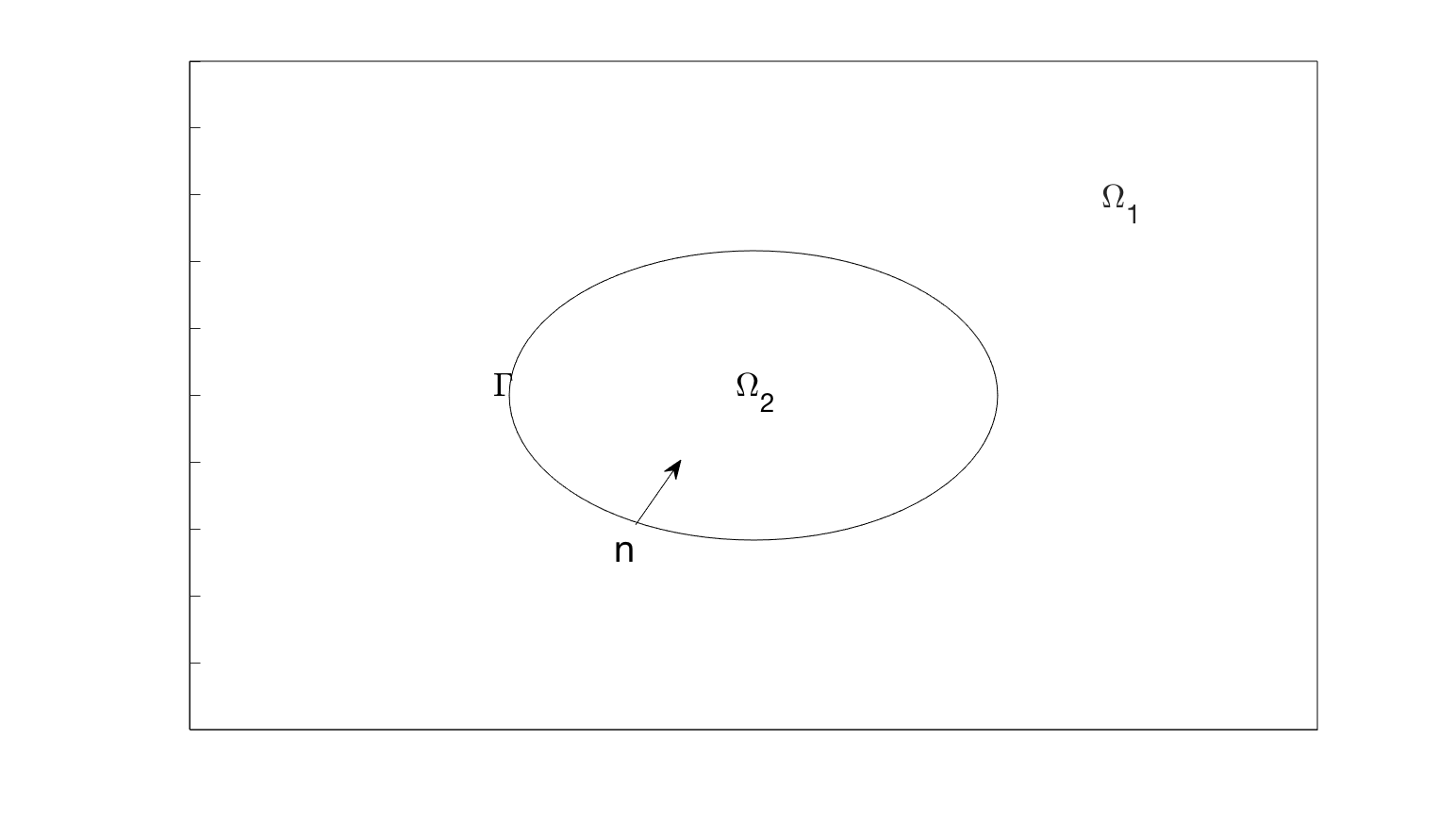} 
	\includegraphics[height = 5.5 cm,width= 6.5 cm]{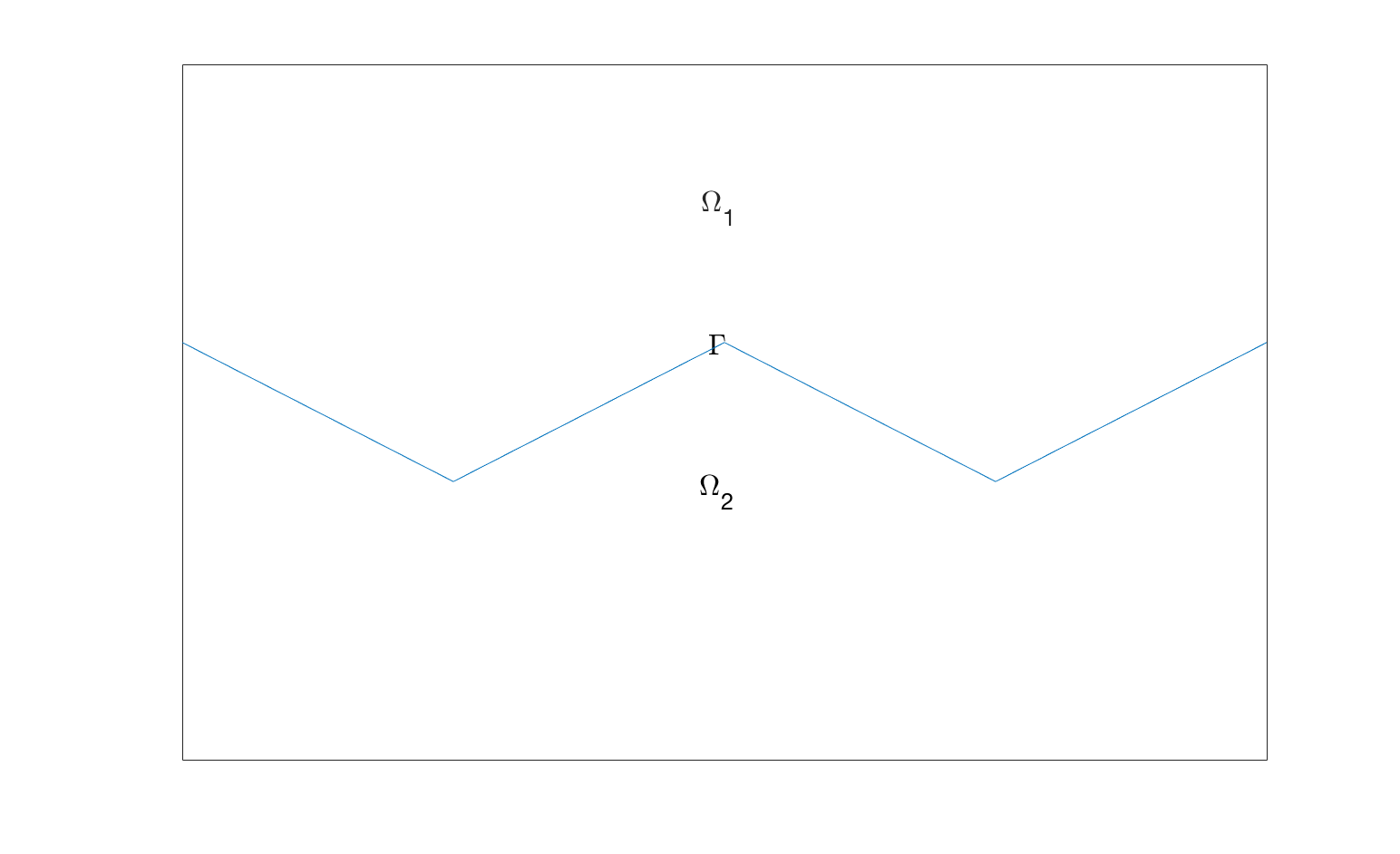} 
	\caption{The geometry of domain with circle interface or fold line interface}\label{domain}
\end{figure}

Due to the  discontinuity of coefficient, the global regularity of the solution to the elliptic interface problem  is generally very low.  This low regularity may result in reduced accuracy of finite element discretization  \cite{babuvska1970finite,xu2013estimate}.  One strategy  for this situation is to use interface(or body)-fitted meshes   to dominate the approximation error caused by the non-smoothness of solution  \cite{barrett1987fittedandunfitted, Brambles96fin, Chen1998Finite, Huang2002Some,   Plum2003Optimal, Li2010Optimal, Cai2011Discontinuous,Cai2017Discontinuous}; see Figure \ref{fitted mesh} for an example. However, the generation of interface-fitted meshes is usually expensive, especially when the interface is of complicated geometry   or   moving with time or iteration. 

Another strategy  avoiding the  loss of numerical accuracy   is to use  certain types of  modification in the finite element discretization for approximating functions around the interface. The resultant finite element methods do not need interface-fitted meshes. One representative of such interface-unfitted methods is the  eXtended Finite Element Method (XFEM) or Generalized Finite Element Method (GFEM) ,  where additional basis functions characterizing the singularity of solution around the interface are enriched into the corresponding approximation space. We  refer to \cite{babuvska1994special,moes1999finite,strouboulis2000design,strouboulis2006generalized,belytschko2009review,babuska2010Optimal,nicaise2011optimal,babuska2011Stable,Burman2012Fictitious,Gupta2015Stable} for some development of   XFEM/GFEM.  It should be pointed out that  a special XFEM   based on the Nitsche's method, called Nitsche-XFEM   was proposed in \cite{hansbo2002unfitted} for the elliptic interface problems. This method
	 enriches    the standard linear finite element space with  additional cut basis functions and   generalizes the results in \cite{babuvska1970finite,barrett1987fittedandunfitted}.  
 Recently, the Nitsche-XFEM was extended to the discretization of  optimal control problems of elliptic interface equations (\cite{wangtao2018nitsche,yangcc2018interface}).   We note that the technique of using cut basis functions as the enrichment  was also applied   in \cite{wu2010unfitted,Massjung2012An,wangchen2014unfitted, wang2016high} to develop  interface-unfitted discontinuous Galerkin methods  for the elliptic interface problems.

The immersed finite element method (IFEM) is another type of interface-unfitted methods,  where   special finite element basis  functions  are constructed to satisfy the interface jump conditions; see, e.g. \cite{leveque1994immersed,lizhilin1998immersed,ewing1999immersed, zhang2004immersed,lizhilin2006immersed, lin2007error,lin2015partially,cao2015iterative} for some development of IFEM.  

\begin{figure}[H]
	\begin{minipage}[t]{0.4\linewidth}
		\centering
		\includegraphics[height = 5.1 cm,width= 7.5 cm, clip, trim = 3.5cm 2cm 3cm 1cm]{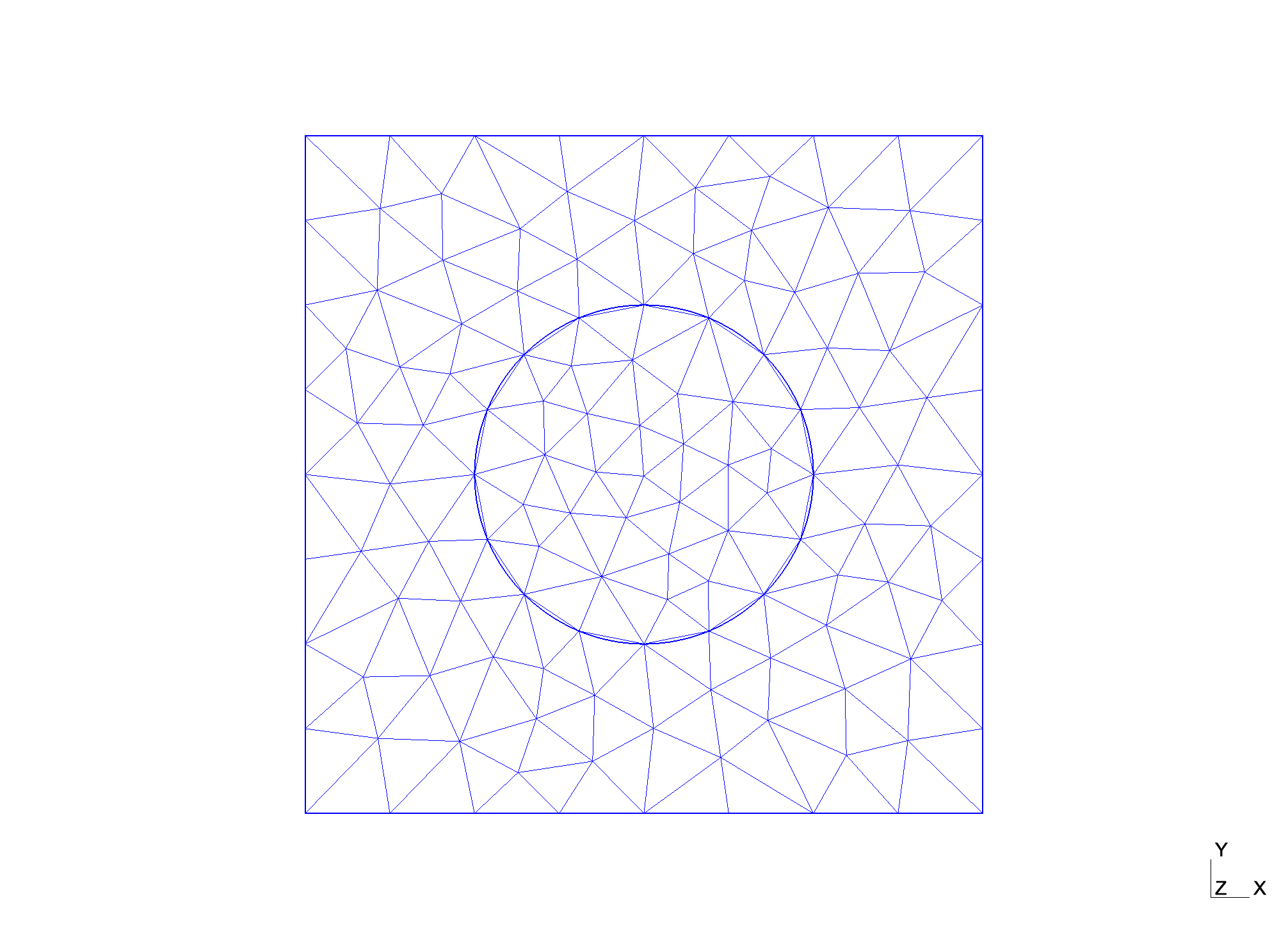}   
		\caption{Fitted mesh}
		\label{fitted mesh}
	\end{minipage}%
	\begin{minipage}[t]{0.5\linewidth}
		\centering
		\includegraphics[width=2.5in]{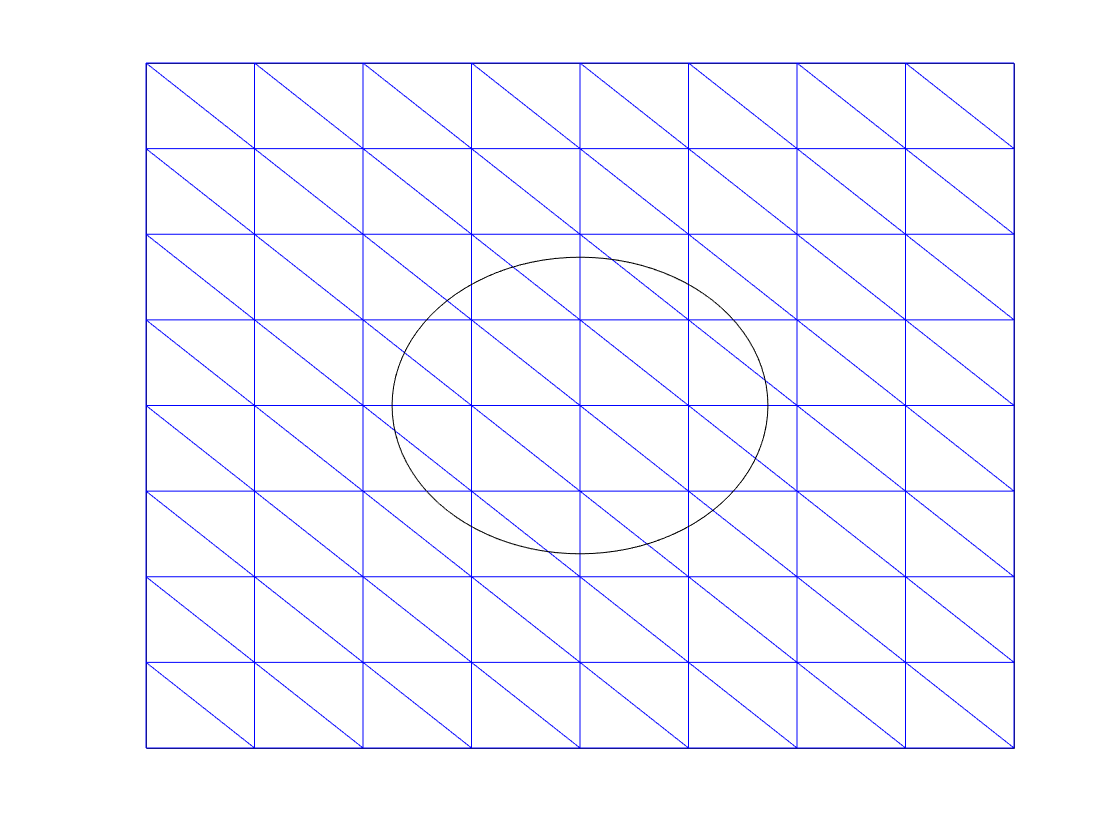}  
		\caption{Unfitted mesh}
		\label{unfitted mesh}
	\end{minipage}
\end{figure}

The hybridizable discontinuous Galerkin (HDG) framework, proposed in \cite{Cockburn2009Unified} for second order elliptic problems, provides a unifying strategy for hybridization of finite element methods. 
It is designed to relax the constraint of function continuity on the inter-element boundaries by introducing Lagrange multipliers defined on the the inter-element boundaries. Thus, it allows piecewise-independent approximation to the potential or flux solution. By the local elimination of the unknowns defined in the interior of elements, the HDG method finally leads to a system where the unknowns are only the globally coupled degrees of freedom describing the Lagrange multipliers. In \cite{Huynh2013A,Wang2013Hybridizable}   high-order interface-fitted HDG methods for solving elliptic and stokes interface problems were proposed. An unfitted HDG method for the Poisson interface problem was presented  in \cite{Dong2018An} by constructing a novel ansatz function in the vicinity of the interface.    Based on the eXtended Finite Element philosophy and a level set description of interfaces, an equal order HDG method was applied in  \cite{G2016eXtendedvoid,G2016eXtended1} to discretize heat bimaterial problems with homogenous interface coditions $g_D = g_N = 0$, where the Heaviside enrichment on cut elements and cut faces is used to represent discontinuities across the interface.  We refer to \cite{cockburn2010comparison,Cockburn2010HDGstokes,Cockburn2011HDGstokes,cockburn2014divergence,chen2014robust,chen2015robust, Li-X-Z2016analysis,Li-X2016analysis,Li-X2016SIAM, chen2017superconvergent} 
for some other developments and applications of the HDG method. 

In this paper we aim to propose two  arbitrary order eXtended HDG (X-HDG) methods for the elliptic interface problem \eqref{pb1}. Compared with \cite{Dong2018An, G2016eXtended1,G2016eXtendedvoid},  our new X-HDG methods are of  the following features:

\begin{itemize}
\item  The first X-HDG method applies to any piecewise $C^2$ smooth interface. It uses piecewise polynomials of degrees $k$ $(k\geq 1)$ and $k-1$  respectively  for the potential and flux approximations in the interior  of elements inside each subdomain,  and piecewise  polynomials of degree $k$ for the numerical traces of potential on the inter-element boundaries  inside each subdomain. 
This means that  the   cut basis functions of   corresponding degrees are applied to enrich the approximation spaces of potential, flux and potential traces around the interface. 

\item The second X-HDG method is a modified version of the first one and applies to any fold line/plane interface. Different from the first one, this modified  method use piecewise polynomials of degree $  k-1$, instead of $k$, for the numerical traces of the potential. 

\item  To deal with the non-homogeneous interface jump condition $\llbracket u \rrbracket=  g_D \neq 0$, we introduce    double value numerical traces with piecewise  polynomials of degree $k$ and $k-1$  on the parts of interface inside elements for the first and second methods,  respectively.

\item For both methods,  optimal error estimates are derived for the flux approximation in $L^2$ norm and for the potential approximation in piecewise $H^1$ seminorm without requiring   ``sufficiently large” stabilization parameters in the schemes. In addition, error estimates for  the potential approximation  in $L^2$ norm are obtained by using dual arguments. 

\item By    local elimination, the proposed X-HDG schemes leads to positive definite systems 
 only involving the unknowns of   numerical traces of potential on the  inter-element boundaries  and  the  interface $\Gamma$. 

\end{itemize}
%}
The rest of the paper is organized as follows.  Section 2  introduces   eXtended finite element (XFE) spaces and eXtended HDG schemes for the elliptic interface problem, and shows the wellposedness of the schemes. Section 3 is devoted to  a priori error estimates for the methods. Numerical examples are provided in Section 4 to verify the theoretical results.

\section{ X-HDG schemes for interface problems}
\subsection{Notations}
For any bounded domain $D\subset \mathbb{R}^s$ $(s= d, d-1)$ and nonnegative integer $m$, let $H^m(D)$ and $H_0^m(D)$ be the usual $m$-th order Sobolev spaces on $D$, with norm $\lVert \cdot\rVert_{m,D}$ and semi-norm   $\lvert\cdot\rvert_{m,D}$.  In particular, $L^2(D):=H^0(D)$ is the space of square integrable functions, with the inner product   $(\cdot,\cdot)_{D}$.%   the inner product of $H^m(D)$, and set $(\cdot,\cdot)_D := (\cdot,\cdot)_{0,D}$ when $m=0$. 
When $D \subset R^{d-1}$, we use $\langle\cdot,\cdot\rangle_D$ to replace $(\cdot,\cdot)_D$.
 We set %When $D = \Omega$, we use the notation
\begin{align*}
H^m(\Omega_{1}\cup\Omega_{2}):= \{v\in L^2(\Omega),v|_{\Omega_{1}} \in H^m(\Omega_{1}),\  and\  v|_{\Omega_{2}} \in H^m(\Omega_{2})\}, \\
\lVert\cdot\rVert_m := \lVert\cdot\rVert_{m,\Omega_1\cup\Omega_2} = \sum\limits_{i=1}^{2} \lVert\cdot\rVert_{m,\Omega_i}, \quad
\lvert\cdot\rvert_m :=\lvert\cdot\rvert_{m,\Omega_1\cup\Omega_2}= \sum\limits_{i=1}^{2} \lvert\cdot\rvert_{m,\Omega_i}. 
%(\cdot,\cdot) :=& (\cdot,\cdot)_{\Omega_1\cup\Omega_2}=\sum\limits_{i=1}^{2} (\cdot,\cdot)_{m,\Omega_i}.
\end{align*} 
  For integer $k\geqslant0$,    $P_k(D)$   denotes the set of all polynomials on D with degree no more than $k$.

Let $\mathcal{T}_h=\cup\{K\}$ be a shape-regular triangulation of the domain $\Omega$ consisting of open triangles/tetrahedrons. % any element $K\in \partial \mathcal{T}_h$ is open. 
We define the set of all elements intersected by the interface $\Gamma$ as 
\begin{align*}
\mathcal{T}_h^{\Gamma} :=\{K\in \mathcal{T}_h: K\cap\Gamma \neq \emptyset\}.
\end{align*} 
For any $K\in \mathcal{T}_h^{\Gamma}$, called an interface element, let  $\Gamma_K := K\cap \Gamma$ be the part of $ \Gamma$ in $K$, $K_i = K\cap \Omega_{i}$ be the part of $K$ in $\Omega_{i}(i = 1,2)$, and   $\Gamma_{K,h}$ be the straight line/plane segment connecting the intersection between $\Gamma_K$ and $\partial K$. 
To ensure that $\Gamma$ is reasonably resolved by $\mathcal{T}_h$, we make the following standard assumptions on $\mathcal{T}_h$ and interface $\Gamma$: % (cf.\cite{hansbo2002unfitted,Burman2012Fictitious}):

%\begin{itemize}
	%\item 
\noindent  {\bf(A1)}.  For $K\in \mathcal{T}_h^{\Gamma}$ and any edge/face $F\subset \partial K$ which intersects $\Gamma$,  $F_\Gamma:=\Gamma\cap F$ is simply connected with either   $F_\Gamma =F$ or $meas(F_\Gamma)=0$. 

\noindent 	{\bf(A2)}. For $K\in \mathcal{T}_h^{\Gamma}$, there is a smooth function $\psi$ which maps $\Gamma_{K,h}$ onto $\Gamma_K$.
%\end{itemize}

\noindent 	{\bf(A3)}. For any two different points $\bm{x},\bm{y}\in \Gamma_{K}$, the unit normal vectors $\bm{n}(\bm{x})$ and $\bm{n}(\bm{y})$, pointing to $\Omega_{2}$, at $\bm{x}$ and $\bm{y}$ satisfy
\begin{align}\label{gamma}
\lvert \bm{n}(\bm{x})-\bm{n}(\bm{y})\rvert \leq \gamma h_K
\end{align}
with $\gamma\geq 0$(cf.\cite{Chen1998Finite,xu2013estimate}). Note that   $\gamma = 0$ when $\Gamma_{K}$ is a straight line/plane segment.

  Let  $\mathcal{\varepsilon}_h$ be the set of all edges (faces) of all elements in $\mathcal{T}_h$ and   $\mathcal{\varepsilon}_h^{\Gamma}$ be  the partition of $\Gamma$ with respect to $\mathcal{T}_h$, i.e.
 $$\mathcal{\varepsilon}_h^{\Gamma} := \{ F:\ F=\Gamma_{K}, \text{ or } F= \Gamma\cap \partial K \text{ if $\Gamma\cap \partial K$ is an edge/face of $K$},\forall   K\in \mathcal{T}_h\},$$
 and  set 
 $\mathcal{\varepsilon}_h^* :=\mathcal{\varepsilon}_h\setminus\mathcal{\varepsilon}_h^{\Gamma} $. For any $K\in \mathcal{T}_h$ and   $F\in \mathcal{\varepsilon}_h^*\cup \mathcal{\varepsilon}_h^{\Gamma},$    $h_K$  and $h_F$ denote respectively   the diameters of $K$ and   $F$, and $\bm{n}_K$ denotes the unit outward  normal vector along   $\partial K$. We denote by 
$h: = \max\limits_{K\in \mathcal{T}_h}h_K$  the mesh size of $\mathcal{T}_h$, and by   $\nabla_h$ and $\nabla_h\cdot$ the piecewise-defined gradient and divergence operators with respect to $\mathcal{T}_h$, respectively.

Throughout the paper, we use $a\apprle b$ $(a\apprge b)$ to denote $a\leq Cb$ $(a\geq Cb)$,
where   $C$ is a generic positive constant   independent of mesh parameters $h, h_K, h_e$,   the coefficients $\alpha_i$ $(i =1,2)$ and the  location of the interface relative to the mesh.

\subsection{Two X-HDG schemes}

The X-HDG method is based on a   first-order system of  the elliptic interface problem (\ref{pb1}): 
\begin{subequations}\label{firstorderscheme}
\begin{align}
\bm{q} = \alpha \nabla u &\quad   {\rm in} \ \Omega_1\cup\Omega_2,\\ 
 - \nabla\cdot \bm{q} = f & \quad  {\rm in} \ \Omega_1\cup\Omega_2,\\
 u=g& \quad {\rm on} \  \partial\Omega,\label{2.2c}\\
 \llbracket u \rrbracket= g_D, \ \llbracket \bm{q}\cdot\bm{n}\rrbracket = g_N&\quad {\rm on} \  \Gamma. \label{2.2d}
\end{align}
\end{subequations}

For  $i=1,2$, let $\chi_i$ be the characteristic function on $\Omega_{i}$, and for any  $K\in \mathcal{T}_h$, $F\in \mathcal{\varepsilon}_h^*\cup \mathcal{\varepsilon}_h^{\Gamma}$ and   integer $r\geq 0$,  
let $Q_{r}^b: L^2(D)\rightarrow P_r(D)$ be the standard $L^2$ orthogonal projection operator with 
$D=F\cap\bar{\Omega}_i$. 
Set $$ \oplus\chi_iP_{r}(K) := \chi_1P_{r}(K)+\chi_2P_{r}(K).$$
\subsubsection{X-HDG scheme for a generic piecewise $C^2$ interface}
We  introduce  the following X-HDG finite element spaces:
\begin{align*}
\bm{W}_h :=& \{\bm{w}\in {L^2}(\Omega)^d: \bm{w}|_K \in {P}_{k-1}(K)^d \ {\rm if} \  K\in   \mathcal{T}_h\setminus \mathcal{T}_h^{\Gamma}; \bm{w}|_K \in \left(\oplus\chi_i {P}_{k-1}(K)\right)^d \ {\rm if} \  K\in  \mathcal{T}_h^{\Gamma}\}, \\
V_h :=& \{v\in L^2(\Omega): v|_K \in P_{k}(K) \ {\rm if} \  K\in   \mathcal{T}_h\setminus  \mathcal{T}_h^{\Gamma}; v|_K \in \oplus\chi_i P_{k}(K) \ {\rm if} \  K\in  \mathcal{T}_h^{\Gamma}\}, \\
M_h :=& \{\mu\in L^2(\varepsilon_h^*): \forall F\in \varepsilon_h^*, \mu|_F \in P_{k}(F) \  {\rm if} \  F\cap \Gamma = \emptyset; \mu|_F \in \oplus\chi_i P_{k}(F) \  {\rm if} \   F\cap \Gamma \neq \emptyset\},\\
M_h(g): =& \{\mu\in M_h : \ \mu|_F=Q_{k}^b(g|_F),\  \forall F\in \varepsilon_h^* \ {\it with}\ F  \subset {\partial \Omega}\}, \\
\tilde{M}_h := &\{\tilde{\mu} = \{\tilde{\mu}_1,\tilde{\mu}_2\}: % \tilde{\mu}_i \in L^2(e), 
\tilde{\mu}_i:=\tilde{\mu}|_{F\cap\bar{\Omega}_i}\in P_{k}(K)|_F,\forall F\in\mathcal{\varepsilon}_h^{\Gamma}, F\subset \bar{K} \text{ for some } K\in   \mathcal{T}_h, i=1,2\},\\
	\tilde{M}_{h}(g_D): = &\{\tilde{\mu} \in \tilde{M}_h: \  \langle \llbracket \tilde{\mu}\rrbracket, {\mu}^*\rangle_{F}=\langle g_D, {\mu}^*\rangle_{F} ,\ \forall  F\in \mathcal{\varepsilon}_h^{\Gamma},\mu^* \in P_{k}(K)|_F \text{ with }  F\subset \bar{K}   \text{ for some } K\in   \mathcal{T}_h\}.
\end{align*}
 It is easy to see that 
$$\tilde{M}_{h}(0)=\{\tilde{\mu} 
\in \tilde{M}_h: \   \llbracket \tilde{\mu}\rrbracket_F = 0,\   \forall F\in \mathcal{\varepsilon}_h^{\Gamma} \}.$$
To describe the X-HDG scheme,   we also define 
\begin{align*}
(\cdot,\cdot)_{\mathcal{T}_h}:=\sum\limits_{K\in  \mathcal{T}_h}(\cdot,\cdot)_K,\quad  \langle\cdot,\cdot\rangle_{\partial\mathcal{T}_h}:=\sum\limits_{K\in  \mathcal{T}_h}\langle\cdot,\cdot\rangle_{\partial K},
\end{align*}
and, for  scalars $w,v$ and vector $\bm{w}$ with $w_i=w|_{F\cap\bar{\Omega}_i}, v_i=v|_{F\cap\bar{\Omega}_i}$ and $\bm{w}_i=\bm{w}|_{F\cap\bar{\Omega}_i}$, 
\begin{align}
\langle w,v\rangle_{*,\Gamma} :&=\sum\limits_{F\in \varepsilon_h^{\Gamma}}\int_{F} (w_1v_1+ w_2v_2)ds,\label{<>_G}\\
%\sum\limits_{e\in \varepsilon_h^{\Gamma}}2\int_{e} \{wv \}ds, \quad
\langle \bm{w},v\bm{n}\rangle_{*,\Gamma} :&=\sum\limits_{F\in \varepsilon_h^{\Gamma}}\int_{F} (v_1\bm{w}_1\cdot\bm{n}_1+v_2\bm{w}_2\cdot\bm{n}_2)ds,\label{<n>_G}
\end{align}
where $\bm{n}_i$ denotes  the unit normal vector along $\Gamma$ pointing from   $\Omega_{i}$ to $\Omega_{j}$ with $i,j=1,2$ and $i\neq j$. % $w_i = w|_{\Omega_i}(i=1,2)$.
Then the X-HDG method is given as follows:  seek $(\bm{q}_h,u_h,\hat{u}_h,\tilde{u}_h)\in \bm{W}_h\times V_h\times M_h(g)\times \tilde{M}_h(g_D)$ such that
\begin{subequations}\label{X-HDGscheme}
	\begin{align}
	(\alpha^{-1}\bm{q}_h,\bm{w})_{\mathcal{T}_h} + (u_h,\nabla_h\cdot \bm{w})_{\mathcal{T}_h} - \langle \hat{u}_h,\bm{w}\cdot \bm{n}\rangle_{\partial\mathcal{T}_h\setminus \mathcal{\varepsilon}_h^{\Gamma}} -\langle\bm{w}, \tilde{u}_h\bm{n}\rangle_{*,\Gamma} &= 0, \label{X-HDG1}\\
	-(\nabla_h\cdot\bm{q}_h,v)_{\mathcal{T}_h}+ \langle \tau(u_h-\hat{u}_h),v\rangle_{\partial\mathcal{T}_h\setminus \mathcal{\varepsilon}_h^{\Gamma}} +  \langle \eta(u_h-\tilde{u}_h),v\rangle_{*,\Gamma}  &= (f,v)_{\mathcal{T}_h} ,\label{X-HDG2}\\
	\langle \bm{q}_h\cdot\bm{n},\mu\rangle_{\partial \mathcal{T}_h\setminus  \mathcal{\varepsilon}_h^{\Gamma}}-\langle \tau(u_h-\hat{u}_h),\mu\rangle_{\partial \mathcal{T}_h\setminus  \mathcal{\varepsilon}_h^{\Gamma}} &= 0,\label{X-HDG4}\\
	\langle \bm{q}_h,\tilde{\mu}\bm{n}\rangle_{*,\Gamma}-\langle \eta( u_h-\tilde{u}_h),\tilde{\mu}\rangle_{*,\Gamma}&= \langle g_N,\tilde{\mu}\rangle_{*,\Gamma} \label{X-HDG5}
	\end{align}
\end{subequations}
hold for any $(\bm{w},v,\mu,\tilde{\mu})\in \bm{W}_h\times V_h\times M_h(0)\times \tilde{M}_{h}(0)$, 
and the stabilization functions $\tau$, $\eta$ are defined as following:  for any  $K\in \mathcal{T}_h, \ F \in  \partial \mathcal{T}_h$ 
and $i=1,2$,
\begin{align}
\tau|_{F\cap \bar\Omega_{i}} &= \alpha_i h_K^{-1} ,\quad {\rm for} \ F\in \partial \mathcal{T}_h\setminus  \mathcal{\varepsilon}_h^{\Gamma} \ {\rm with} \ F\cap \bar\Omega_{i}\neq\emptyset ,  \label{stablizationpara1}\\ 
 \eta|_{F\cap \bar{ \Omega}_i}&=\alpha_i h_{K}^{-1}  \quad   \text{ for } F\in \varepsilon_h^{\Gamma} \ {\rm with }\ F=\Gamma_K \text{ or } F\subset \partial (K \cap \Omega_i). \label{stablizationpara2}
\end{align}

%{\color{red}
	\begin{rem}
		We note that this X-HDG scheme is  ``parameter-friendly"  in the sense that there is no need   to choose any ``sufficiently large"  factors in the stabilization functions $\tau$, $\eta$.
	\end{rem}
%}
\begin{rem}
Note that   $\bm{q}_h$ and $u_h$ can be eliminated locally from 
 the X-HDG system (\ref{X-HDGscheme}), which leads to a discrete   system only involving the parameters of numerical traces $\hat{u}_h$ and $\tilde{u}_h$ as unknowns. 
\end{rem}

\begin{thm}\label{thm2.1}
	For $k\geq 1$, the X-HDG system (\ref{X-HDGscheme}) admits a unique solution $(\bm{q}_h,u_h,\hat{u}_h,\tilde{u}_h)$.
\end{thm}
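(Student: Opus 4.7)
The plan is to prove uniqueness of the square linear system \eqref{X-HDGscheme}, which automatically yields existence. I set $f=g=g_D=g_N=0$ and aim to show the only solution is $(\bm{q}_h,u_h,\hat{u}_h,\tilde{u}_h)=(0,0,0,0)$. Since under homogeneous data the solution itself lies in the test spaces $M_h(0)\times\tilde{M}_h(0)$, I substitute $(\bm{w},v,\mu,\tilde{\mu})=(\bm{q}_h,u_h,\hat{u}_h,\tilde{u}_h)$ into \eqref{X-HDG1}--\eqref{X-HDG5} and add the four equations. The volume pairings $(u_h,\nabla_h\cdot\bm{q}_h)_{\mathcal{T}_h}$ cancel between \eqref{X-HDG1} and \eqref{X-HDG2}, and the skew-symmetric boundary pairings of $\bm{q}_h\cdot\bm{n}$ against $\hat{u}_h$ (resp.\ $\tilde{u}_h$) cancel between \eqref{X-HDG1} and \eqref{X-HDG4} (resp.\ \eqref{X-HDG5}), leaving the energy identity
\begin{equation*}
(\alpha^{-1}\bm{q}_h,\bm{q}_h)_{\mathcal{T}_h}+\langle\tau(u_h-\hat{u}_h),u_h-\hat{u}_h\rangle_{\partial\mathcal{T}_h\setminus\varepsilon_h^\Gamma}+\langle\eta(u_h-\tilde{u}_h),u_h-\tilde{u}_h\rangle_{*,\Gamma}=0.
\end{equation*}
Strict positivity of $\alpha^{-1},\tau,\eta$ from \eqref{stablizationpara1}--\eqref{stablizationpara2} forces $\bm{q}_h\equiv 0$, $u_h=\hat{u}_h$ on $\partial\mathcal{T}_h\setminus\varepsilon_h^\Gamma$, and $u_h|_{K_i}=\tilde{u}_{h,i}$ on every $\Gamma_K$ for $i=1,2$.

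Next I substitute $\bm{q}_h=0$ into \eqref{X-HDG1} and integrate by parts element by element. On each interface element $K$ I apply the divergence theorem separately on the sub-pieces $K_1$ and $K_2$; the interior boundary terms on $\Gamma_K$ then carry outward normals $\bm{n}_1,\bm{n}_2$ and assemble (cf.\ \eqref{<n>_G}) precisely into $\langle\bm{w},u_h\bm{n}\rangle_{*,\Gamma}$. Thus \eqref{X-HDG1} becomes
\begin{equation*}
-(\nabla_h u_h,\bm{w})_{\mathcal{T}_h}+\langle u_h-\hat{u}_h,\bm{w}\cdot\bm{n}\rangle_{\partial\mathcal{T}_h\setminus\varepsilon_h^\Gamma}+\langle\bm{w},(u_h-\tilde{u}_h)\bm{n}\rangle_{*,\Gamma}=0,\quad\forall\bm{w}\in\bm{W}_h.
\end{equation*}
The two surface terms vanish by the matching conclusions of the first step, so $(\nabla_h u_h,\bm{w})_{\mathcal{T}_h}=0$. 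Since $\nabla_h u_h$ is a polynomial of degree $k-1$ on each non-interface element and sub-domain-wise of degree $k-1$ on each interface element, it lies in $\bm{W}_h$; choosing $\bm{w}=\nabla_h u_h$ gives $\nabla_h u_h=0$. Hence $u_h$ is constant on every non-interface element and on every sub-piece $K_i$ of every interface element.

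Finally I globalize. Testing $\llbracket\tilde{u}_h\rrbracket\in P_k(K)|_F$ against itself in the defining relation of $\tilde{M}_h(0)$ yields $\llbracket\tilde{u}_h\rrbracket=0$ on every $F\in\varepsilon_h^\Gamma$, so combined with $u_h|_{K_i}=\tilde{u}_{h,i}$ on $\Gamma_K$ the two sub-piece constants of each interface element coincide. Across every inter-element face (cut by $\Gamma$ or not) the identity $u_h=\hat{u}_h$ equalizes the constants of the two adjacent (sub-)pieces, and by connectedness of $\mathcal{T}_h$ the function $u_h$ reduces to a single global constant, which is pinned to $0$ by $\hat{u}_h|_{F\subset\partial\Omega}=Q_k^b(g)|_F=0$. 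Then $\hat{u}_h=\tilde{u}_h=0$ follow as well. The main technical obstacle is the sub-domain integration by parts: one must verify carefully that the interior contribution produced on $\Gamma_K$ matches exactly the $\langle\bm{w},\tilde{u}_h\bm{n}\rangle_{*,\Gamma}$ term appearing in the scheme, with the correct orientation of $\bm{n}_1,\bm{n}_2$, which is precisely the reason the X-HDG design introduces the double-valued trace $\tilde{u}_h$ and the pairing \eqref{<n>_G}.
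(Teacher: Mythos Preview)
Your proof is correct and follows essentially the same approach as the paper: derive the energy identity by testing with the solution itself, conclude $\bm{q}_h=0$ together with the trace-matching relations, then integrate \eqref{X-HDG1} by parts and test with $\bm{w}=\nabla_h u_h\in\bm{W}_h$ to obtain $\nabla_h u_h=0$, and finally propagate the piecewise constant to zero via $\hat{u}_h|_{\partial\Omega}=0$ and $\llbracket\tilde{u}_h\rrbracket=0$. Your write-up is in fact slightly more explicit than the paper's about why $\nabla_h u_h\in\bm{W}_h$ and about the connectivity argument that forces the single global constant.
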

\begin{proof}
	Since (\ref{X-HDGscheme}) is a linear square system, it suffices to show that if all of the given data vanish, i.e. $f = g =  g_D = g_N = 0$, then we get the zero solution. In fact, taking  $(\bm{w},v,\mu,\tilde{\mu}) = (\bm{q}_h,u_h,\hat{u}_h,\tilde{u}_h)$ in \eqref{X-HDG1}-\eqref{X-HDG5} and adding these equations together, we have
	\begin{align*}
	(\alpha^{-1}\bm{q}_h,\bm{q}_h)_{\mathcal{T}_h} + \langle \tau(u_h-\hat{u}_h),(u_h-\hat{u}_h)\rangle_{\partial\mathcal{T}_h\setminus \mathcal{\varepsilon}_h^{\Gamma}} 
	+ \langle \eta(u_h-\tilde{u}_h),(u_h-\tilde{u}_h)\rangle_{*,\Gamma} = 0 .
	\end{align*}
This implies
	\begin{align}
	\bm{q}_h &= \bm{0} \quad {\rm on} \  \mathcal{T}_h  \label{qh27} \\
	 u_h-\hat{u}_h &= 0 \quad {\rm on} \  \partial \mathcal{T}_h\setminus \mathcal{\varepsilon}_h^{\Gamma},  \label{Qh28}\\
	 \{\{(u_h-\tilde{u}_h)^2\}\} &= 0  \quad {\rm on} \   \Gamma. \label{Qlh29}
	\end{align}
where $\{\{\cdot\}\} $ is defined by  
\begin{align}\label{average}
\{\{w\}\} = \frac{1}{2}(w_1+w_2) \quad \text{ with } w_i=w|_{\Gamma\cap\bar{\Omega}_i} , \ i=1,2.
\end{align}
  In light of the above three  relations and  integration by parts the equation \eqref{X-HDG1}   yields
\begin{eqnarray*}
	0=-(\nabla_h u_h, \bm{w})_{\mathcal{T}_h} + \langle u_h-\hat{u}_h,\bm{w}\cdot \bm{n}\rangle_{\partial\mathcal{T}_h\setminus \mathcal{\varepsilon}_h^{\Gamma}} +\langle (u_h-\tilde{u}_h)\bm{n},\bm{w}\rangle_{*,\Gamma}, \ \forall \bm{w} \in \bm{W}_h,
\end{eqnarray*}
which indicates $(\nabla_h u_h, \nabla_h u_h)_{\mathcal{T}_h}=0.$ Thus, $\nabla_h u_h = 0$ and $u_h$ is piecewise constant.  On the  other hand,  the fact $g= g_D  =0$ implies $\hat{u}_h|_{\partial \Omega} = 0$ and $\llbracket \tilde{u}_h\rrbracket _\Gamma= 0$, respectively.      As a result, from \eqref{Qh28} and \eqref{Qlh29}   it follows 
 $u_h = 0, \hat{u}_h=0$ and $ \tilde{u}_h|_{\Gamma\cap\bar\Omega_i}=0$. This completes the proof.
\end{proof}
\subsubsection{Modified X-HDG scheme  for a fold line/plane interface}\label{rem-modified}
	We note that the X-HDG scheme \eqref{X-HDGscheme} applies to any   piecewise $C^2$ interface,  and that $\hat{u}_h\in   M_h(g)$ and $\tilde{u}_h\in  \tilde{M}_h$  are both piecewise polynomials of degree no more than $k$.    In fact, when the interface $\Gamma$ is a fold line/plane, we can use  lower order polynomial approximations for  $\hat{u}_h$ and $  \tilde{u}_h$ to get  a modified X-HDG scheme:  seek $(\bm{q}_h,u_h,\hat{u}_h,\tilde{u}_h)\in \bm{W}_h\times V_h\times M_h^*(g)\times \tilde{M}_h^*(g_D)$ such that
	\begin{subequations}\label{X-HDGscheme1}
		\begin{align}
		(\alpha^{-1}\bm{q}_h,\bm{w})_{\mathcal{T}_h} + (u_h,\nabla_h\cdot \bm{w})_{\mathcal{T}_h} - \langle \hat{u}_h,\bm{w}\cdot \bm{n}\rangle_{\partial\mathcal{T}_h\setminus \mathcal{\varepsilon}_h^{\Gamma}} -\langle\bm{w}, \tilde{u}_h\bm{n}\rangle_{*,\Gamma} &= 0, \label{X-HDG11}\\
		-(\nabla_h\cdot\bm{q}_h,v)_{\mathcal{T}_h}+ \langle \tau(Q_{k-1}^bu_h-\hat{u}_h),v\rangle_{\partial\mathcal{T}_h\setminus \mathcal{\varepsilon}_h^{\Gamma}} +  \langle \eta(Q_{k-1}^bu_h-\tilde{u}_h),v\rangle_{*,\Gamma}  &= (f,v)_{\mathcal{T}_h} ,\label{X-HDG21}\\
		\langle \bm{q}_h\cdot\bm{n},\mu\rangle_{\partial \mathcal{T}_h\setminus  \mathcal{\varepsilon}_h^{\Gamma}}-\langle \tau(Q_{k-1}^bu_h-\hat{u}_h),\mu\rangle_{\partial \mathcal{T}_h\setminus  \mathcal{\varepsilon}_h^{\Gamma}} &= 0,\label{X-HDG41}\\
		\langle \bm{q}_h,\tilde{\mu}\bm{n}\rangle_{*,\Gamma}-\langle \eta( Q_{k-1}^bu_h-\tilde{u}_h),\tilde{\mu}\rangle_{*,\Gamma}&= \langle g_N,\tilde{\mu}\rangle_{*,\Gamma} \label{X-HDG51}
		\end{align}
	\end{subequations}
hold for any $(\bm{w},v,\mu,\tilde{\mu})\in \bm{W}_h\times V_h\times M_h^*(0)\times \tilde{M}_{h}^*(0)$,	
where the modified spaces $M_h^*(g), \tilde{M}_h^*(g_D)$ are defined by
\begin{align*}
M_h^*(g): =& \{\mu\in M_h^* : \ \mu|_F=Q_{k-1}^b(g|_F),\  \forall F\in \varepsilon_h^* \ {\it with}\ F  \subset {\partial \Omega}\}, \\
M_h^* :=& \{\mu\in L^2(\varepsilon_h^*): \forall F\in \varepsilon_h^*, \mu|_F \in P_{k-1}(F) \  {\rm if} \  F\cap \Gamma = \emptyset; \mu|_F \in \oplus\chi_i P_{k-1}(F) \  {\rm if} \   F\cap \Gamma \neq \emptyset\},\\
\tilde{M}_{h}^*(g_D): = &\{\tilde{\mu} \in \tilde{M}_h^*: \  \langle \llbracket \tilde{\mu}\rrbracket, {\mu}^*\rangle_{F}=\langle g_D, {\mu}^*\rangle_{F} ,\ \forall  F\in \mathcal{\varepsilon}_h^{\Gamma},\mu^* \in P_{k}(K)|_F \text{ with }  F\subset \bar{K}   \text{ for some } K\in   \mathcal{T}_h\},\\
\tilde{M}_h^* := &\{\tilde{\mu} = \{\tilde{\mu}_1,\tilde{\mu}_2\}: % \tilde{\mu}_i \in L^2(e), 
\tilde{\mu}_i:=\tilde{\mu}|_{F\cap\bar{\Omega}_i}\in P_{k-1}(K)|_F,\forall F\in\mathcal{\varepsilon}_h^{\Gamma}, F\subset \bar{K} \text{ for some } K\in   \mathcal{T}_h, i=1,2\}.
\end{align*}
\begin{rem}
	The existence and uniqueness of the solution to \eqref{X-HDGscheme1} can be obtained by following the same routine as in the proof of  Theorem \ref{thm2.1}.  It is easy to see that the size of this modified system is smaller than that of the original system \eqref{X-HDGscheme}.
\end{rem}

\section{A priori error estimates}

This section is devoted to the error estimation for the   X-HDG scheme (\ref{X-HDGscheme}) and the modified scheme (\ref{X-HDGscheme1}).	
Let $Q_{r}: L^2(D)\rightarrow P_r(D)$ be the standard $L^2$ orthogonal projection operator with $D=K\cap\Omega_i$ for any $K\in \mathcal{T}_h$ and $i=1,2$. 
Recall that  $Q_{r}^b$ is the standard $L^2$ orthogonal projection operator from $L^2(F\cap\bar{\Omega}_i) $ onto $P_r(F\cap\bar{\Omega}_i)$ for any $F\in \mathcal{\varepsilon}_h^*\cup \mathcal{\varepsilon}_h^{\Gamma}$.

The following lemma from \cite{wu2010unfitted}  will be used to derive an error estimate of the projection $Q_r$ on the interface $\Gamma$ (cf. Lemma \ref{ineq}).

\begin{lem}\label{trace}
	There exists a positive constant $h_0$ depending only on the interface $\Gamma$, the shape regularity of the mesh $\mathcal{T}_h$, and $\gamma$ in (\ref{gamma}), such that for any $h \in (0, h_0]$ and   $ K\in \mathcal{T}_h^{\Gamma}$, the following estimates hold: 
	\begin{align}
	\lVert v\rVert_{0,\Gamma_K} &\apprle h_K^{-1/2}\lVert v\rVert_{0,K\cap\Omega_{i}} +\lVert v\rVert_{0, K\cap\Omega_{i}}^{1/2} \lVert \nabla v\rVert_{0,K\cap\Omega_{i}}^{1/2}, \quad \forall  v\in H^1(K\cap\Omega_{i}),  \ i=1,2,\\
	%\end{align}
	%	Moreover, for any $v_h\in P_k(K)$, it holds
	%	\begin{align}
	\lVert v_h\rVert_{0,\Gamma_K} &\apprle h_K^{-1/2}\lVert v_h\rVert_{0,K\cap\Omega_{i}}, \quad \forall v_h\in P_r(K) .
	\end{align}
\end{lem}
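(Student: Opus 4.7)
Both inequalities are variants of the classical scaled trace inequality, but adapted to the situation where the integration surface is the curved piece $\Gamma_K$ sitting inside $K\cap\Omega_i$ rather than a face of the simplex $K$. The key geometric input is assumption (A3): the unit normal to $\Gamma_K$ varies by at most $\gamma h_K$, so at scale $h_K$ the interface is an $O(h_K)$ perturbation of a flat cut, and (A2) furnishes the smooth straightening map $\psi:\Gamma_{K,h}\to\Gamma_K$ with bounded bi-Lipschitz constants. My plan is to use these to reduce both estimates to standard inequalities on a nearly-flat slab.

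\textbf{Plan for (3.2).} I would argue by a divergence-theorem identity. Choose a smooth vector field $\bm\phi:\overline{K\cap\Omega_i}\to\mathbb{R}^d$ with
\begin{equation*}
\bm\phi\cdot\bm n_{\Gamma_K}\ge 1\ \text{on}\ \Gamma_K,\quad \bm\phi\cdot\bm n_K\le 0\ \text{on}\ \partial K\cap\overline{\Omega_i},\quad \|\bm\phi\|_{L^\infty}\lesssim 1,\quad \|\nabla\cdot\bm\phi\|_{L^\infty}\lesssim h_K^{-1}.
\end{equation*}
Such a $\bm\phi$ exists by (A3): start from a constant unit vector $\bm n_0$ close to the average of $\bm n$ on $\Gamma_K$, then smoothly modify it inside an $h_K$-collar of $\partial K\cap\overline{\Omega_i}$ so that it points inward there. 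With $\bm\phi$ in hand, I apply Gauss's theorem on $K\cap\Omega_i$:
\begin{equation*}
\|v\|_{0,\Gamma_K}^2 \le \int_{\Gamma_K} v^2\,\bm\phi\cdot\bm n_{\Gamma_K}\,ds \le \int_{\partial(K\cap\Omega_i)} v^2\,\bm\phi\cdot\bm n\,ds = \int_{K\cap\Omega_i}\nabla\cdot(v^2\bm\phi)\,dx,
\end{equation*}
and Cauchy--Schwarz together with the bounds on $\bm\phi$ and $\nabla\cdot\bm\phi$ delivers
\begin{equation*}
\|v\|_{0,\Gamma_K}^2\lesssim h_K^{-1}\|v\|_{0,K\cap\Omega_i}^2+\|v\|_{0,K\cap\Omega_i}\|\nabla v\|_{0,K\cap\Omega_i},
\end{equation*}
which is exactly (3.2) after taking square roots.

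\textbf{Plan for (3.3).} For the polynomial inequality I would pass to a straightened slab. Using (A2)--(A3), parametrise a neighborhood of $\Gamma_K$ inside $K\cap\Omega_i$ as
\begin{equation*}
S_K=\bigl\{\psi(x)-s\bm\nu(x):\, x\in\Gamma_{K,h},\ 0<s<c_0 h_K\bigr\}\subset K\cap\Omega_i,
\end{equation*}
where $\bm\nu$ is a smooth inward-pointing direction and the constant $c_0$, which depends only on shape-regularity and $\gamma$, is chosen so that $S_K$ actually fits inside $K\cap\Omega_i$ for all $h\le h_0$. Under the change of variables to a reference planar slab $\hat S=\hat\Gamma\times(0,1)$, the Jacobian and its inverse are bounded uniformly, and any polynomial $v_h\in P_r(K)$ pulls back to a function whose restriction to every line $\{x\}\times(0,1)$ is a univariate polynomial of degree $\le r$. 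The standard one-dimensional inverse trace inequality $|p(0)|^2\lesssim\int_0^1 p^2\,ds$ applied fibrewise, followed by integration over $\hat\Gamma$ and undoing the change of variables, yields $\|v_h\|_{0,\Gamma_K}^2\lesssim h_K^{-1}\|v_h\|_{0,S_K}^2\le h_K^{-1}\|v_h\|_{0,K\cap\Omega_i}^2$.

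\textbf{Main obstacle.} Both proofs hinge on the same geometric fact, and this is the genuine difficulty: one must construct either $\bm\phi$ in (3.2) or the slab $S_K$ in (3.3) with constants that are robust regardless of how $\Gamma$ slices $K$, even when $K\cap\Omega_i$ is an arbitrarily thin sliver. This is why the existence of the threshold $h_0$ is built into the statement: below $h_0$, assumption (A3) forces $\Gamma_K$ to stay within an $O(\gamma h_K)$-tube of its chord $\Gamma_{K,h}$, which guarantees that a slab of height comparable to $h_K$ fits inside $K\cap\Omega_i$ along the entire interface piece. Once this geometric setup is available, the analytic portion of both inequalities is routine.
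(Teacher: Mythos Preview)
The paper does not actually prove this lemma; the sentence immediately preceding it reads ``The following lemma from \cite{wu2010unfitted} will be used \ldots'', and no argument is given. So there is no in-paper proof to compare your approach against. Your proposal, however, contains a genuine gap.

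First, a minor slip in the divergence-theorem argument for the first inequality: you require $\bm\phi\cdot\bm n_K\le 0$ on $\partial K\cap\overline{\Omega_i}$, but for the step
\[
\int_{\Gamma_K} v^2\,\bm\phi\cdot\bm n_{\Gamma_K}\,ds \;\le\; \int_{\partial(K\cap\Omega_i)} v^2\,\bm\phi\cdot\bm n\,ds
\]
to hold you need the added boundary contribution to be nonnegative, i.e.\ $\bm\phi\cdot\bm n_K\ge 0$ there. This sign should be flipped.

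The substantive problem is in your ``Main obstacle'' paragraph. You claim that, once $h\le h_0$, assumption~(A3) guarantees that a slab of height $c_0 h_K$ fits inside $K\cap\Omega_i$. This is not what (A3) gives you. Assumption~(A3) bounds only the \emph{curvature} of $\Gamma_K$, forcing it to stay in an $O(\gamma h_K^2)$ tube around the chord $\Gamma_{K,h}$; it places no restriction on \emph{where} $\Gamma$ meets $\partial K$. When $\Gamma$ shaves off a small corner of $K$, the piece $K\cap\Omega_i$ can be a sliver of thickness $\epsilon$ with $\epsilon/h_K$ arbitrarily small, independently of $h$, and no slab of height $c_0 h_K$ can sit inside it. The same obstruction kills the vector-field construction even with the corrected sign: in a sliver with nearly parallel boundary pieces you cannot have $\bm\phi\cdot\bm n_{\Gamma_K}\ge 1$ on one side and $\bm\phi\cdot\bm n_K\ge 0$ on the other while keeping $\|\nabla\cdot\bm\phi\|_{L^\infty}\lesssim h_K^{-1}$; the divergence is forced to scale like $\epsilon^{-1}$. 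Thus both arguments break precisely in the regime that makes the lemma nontrivial, and invoking $h_0$ does not help, since slivers occur at every mesh size. The arguments in the cut-FEM literature (including the cited reference) typically sidestep this either by taking norms over the full element $K$ --- note that $v_h\in P_r(K)$ is globally defined --- or by using a stable extension from $K\cap\Omega_i$ to $K$; you should look there for the missing ingredient.
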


\begin{rem}\label{rem-h0}
	We note that the condition $h\in(0,h_0]$ for some $h_0$ in this lemma  is not required when $\Gamma_{K}$ is a straight line/plane segment, and this condition  is easy to satisfy when $\Gamma_{K}$ is a curved line/surface segment.
\end{rem}

Based on standard  properties of the projection operator and  Lemma \ref{trace}, we have the following estimates.
\begin{lem}\label{ineq} 
	Let $s$ be an integer with $1\leq s\leq r+1$. For any $K\in \mathcal{T}_h$, $h\in (0, h_0]$ and $ v\in H^s\left((K\cap\Omega_{1})\cup (K\cap\Omega_{2}) \right)$, we have
	\begin{align*}
	\lVert v-Q_{r}v\rVert_{0,K}+h\lVert v-Q_{r}v\rVert_{1,K}&\apprle h_K^s\lVert v\rVert_{s,K}, \\
	\lVert v-Q_{r}v\rVert_{0,\partial K}+\lVert v-Q_{r}v\rVert_{0,\Gamma_{K}}&\apprle h_K^{s-1/2}\lVert v\rVert_{s,K}, \\ 
		\lVert v-Q_{r}^bv\rVert_{0,\partial K} 
		&\apprle h_K^{s-1/2}\lVert v\rVert_{s,K},
	\end{align*}
	where the notations $\lVert \cdot\rVert_{s,K} $ and $\lVert\cdot\rVert_{0,\partial K}$ are understood respectively  as   $\lVert \cdot\rVert_{s,K} = \sum\limits_{i=1}^2\lVert \cdot\rVert_{s,K\cap\Omega_{i}}$ and $\lVert \cdot\rVert_{s,\partial K} = \sum\limits_{i=1}^{2}\lVert \cdot\rVert_{s,\partial K\cap \bar \Omega_{i}}$ when  $K\in  \mathcal{T}_h^\Gamma$. 
\end{lem}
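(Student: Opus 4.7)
The plan is to reduce everything to standard Bramble--Hilbert and trace estimates on full elements by means of a Sobolev extension. For a non--interface element $K\in\mathcal{T}_h\setminus\mathcal{T}_h^{\Gamma}$ all three estimates are the classical approximation properties of the $L^2$--projection together with the scaled trace inequality, so the only work concerns interface elements $K\in\mathcal{T}_h^{\Gamma}$ and will be carried out piece by piece on $K\cap\Omega_i$, $i=1,2$.

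First I would fix $i$ and introduce a Stein--type extension $E_i:H^s(K\cap\Omega_i)\to H^s(K)$ satisfying $\lVert E_iv\rVert_{s,K}\apprle\lVert v\rVert_{s,K\cap\Omega_i}$ with a constant independent of $h_K$ and of the position of $\Gamma_K$; such an extension exists under assumption (A2), which provides a uniform smooth parametrisation of $\Gamma_K$. Let $Q_r^K$ denote the $L^2(K)$--projection onto $P_r(K)$ and set $q_i:=Q_r^K(E_iv)$. The usual Bramble--Hilbert estimate on the shape--regular element $K$ then yields
\begin{equation*}
\lVert E_iv-q_i\rVert_{0,K}+h_K|E_iv-q_i|_{1,K}+h_K^{1/2}\lVert E_iv-q_i\rVert_{0,\partial K}\apprle h_K^s\lVert v\rVert_{s,K\cap\Omega_i}.
\end{equation*}

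For the volume estimates I would use the quasi--optimality of $Q_r$ in $L^2(K\cap\Omega_i)$: since $q_i|_{K\cap\Omega_i}\in P_r(K\cap\Omega_i)$ is a competitor, $\lVert v-Q_rv\rVert_{0,K\cap\Omega_i}\leq\lVert E_iv-q_i\rVert_{0,K}\apprle h_K^s\lVert v\rVert_{s,K\cap\Omega_i}$. For the $H^1$ contribution I would write $v-Q_rv=(v-q_i)+(q_i-Q_rv)$, bound the first part directly by the Bramble--Hilbert estimate, and control the second (polynomial) term via an inverse inequality on $K\cap\Omega_i$ combined with the $L^2$--bound just obtained. For the interface trace on $\Gamma_K$ I would plug $w=v-Q_rv$ into the first inequality of Lemma~\ref{trace} and insert the two volume bounds; the two resulting terms both collapse to $h_K^{s-1/2}\lVert v\rVert_{s,K\cap\Omega_i}$. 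For $\lVert v-Q_rv\rVert_{0,\partial K\cap\bar\Omega_i}$ I would again split as $(v-q_i)+(q_i-Q_rv)$: the first part is controlled through the scaled trace of $E_iv-q_i$ on the full element boundary $\partial K$ above, while the second (polynomial) part is handled by an analogue of the second inequality of Lemma~\ref{trace} on the cut boundary piece, followed by the already--proved $L^2$ volume estimate. The final estimate involving $Q_r^b$ is handled face by face: assumption (A1) ensures that each face of $K$ is either disjoint from $\Gamma$ or split into two pieces, on each of which $Q_r^b$ is the standard $L^2$--projection onto $P_r$, so a face--wise Bramble--Hilbert argument combined with the trace estimate of the previous step suffices.

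The main obstacle will be making sure that all hidden constants are independent of $h_K$ and, crucially, of the position of $\Gamma$ relative to the mesh. The uniform boundedness of the Stein extension and of the inverse/trace inequalities on the possibly very thin cut pieces $K\cap\Omega_i$ is what really requires assumptions (A1)--(A3) together with the smallness condition $h\leq h_0$ (cf.\ Remark~\ref{rem-h0}); without these, the norm--equivalences between polynomials on $K$ and on $K\cap\Omega_i$ that are implicitly used in the polynomial--part argument can blow up. Once these uniform constants are secured, all pieces combine in a straightforward way.
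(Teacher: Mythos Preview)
The paper does not give a detailed proof: it simply asserts that the estimates follow from ``standard properties of the projection operator and Lemma~\ref{trace}'', implicitly deferring the cut-element technicalities to \cite{wu2010unfitted}. Your sketch---Stein extension plus Bramble--Hilbert on the full element for the volume and $\partial K$ bounds, Lemma~\ref{trace} for the $\Gamma_K$ trace, and the correctly flagged obstacle of uniform extension/inverse/trace constants on thin cut pieces---is exactly a fleshed-out version of that one-line justification and is fully aligned with the paper's intent.
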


In what follows, we shall derive the error estimation for the X-HDG scheme (\ref{X-HDGscheme}) in two cases:

\begin{itemize}
\item[Case 1.]	    the interface $\Gamma$ is a  fold line/plane such that $\Gamma_K$ is a straight  line/plane segment, i.e.  $\Gamma_K=\Gamma_{K,h} $,  for any $K\in  \mathcal{T}_h^{\Gamma}$; 

\item[Case 2.]  $g_D = 0$ when $\Gamma$ is not a fold line/plane. 

\end{itemize}

We set 
\begin{align}\label{erroroperator}
\bm{e}_h^q: = \bm{q}_h - \bm{Q}_{k-1}\bm{q}, \quad  e_h^u: = u_h - Q_{k}u, \quad  e_h^{\hat{u}}: = \hat{u}_h - Q_{k}^bu, \quad  e_h^{\tilde{u}}: = \tilde{u}_h - Q_{k}^{\Gamma}u.
\end{align}
Here 
\begin{align}
(\bm{Q}_{k-1}\bm{q})|_{K\cap\Omega_{i}}:=& \bm{Q}_{k-1}(\bm{q}|_{K\cap\Omega_{i}}), \quad 
(Q_{k}u)|_{K\cap\Omega_{i}}:= Q_{k}(u|_{K\cap\Omega_{i}}), \ \  \forall K\in\mathcal{T}_h , \ i=1,2,\nonumber\\
(Q_{k}^bu)|_{F\cap\Omega_{i}}:=& Q_{k}^b(u|_{F\cap\Omega_{i}}), \ \  \forall F\in\varepsilon_h^{*}, \ i=1,2,\nonumber\\
(Q_{k}^{\Gamma}u)|_F:=&\left\{ \begin{array}{ll}
\{Q_{k}^b(u|_{\bar{\Omega}_{1}\cap F}),Q_{k}^b(u|_{\bar{\Omega}_{2}\cap F})\}, \ \forall F\in\varepsilon_h^{\Gamma} \ \text{and} \ F \text{ is a straight segment},\\
\{u^*_F, u^*_F\}, \ \forall F\in\varepsilon_h^{\Gamma} \ \text{and}\ F \text{ is not a straight segment},%F\in K \  \text{for some} \ K\in \mathcal{T}_h,
\end{array}\right. \label{Q^Ga}
\end{align} 
where  $\bm{Q}_r$  denotes the 
vector  analogue of $Q_{r}$,    and $u^*_F:=\frac{1}{2}(Q_{k}(u|_{K\cap \Omega_{1}} )|_{F}+Q_{k}(u|_{K\cap \Omega_{2}})|_{F})$.
%}

We also define, for any $\psi \in H^1(\Omega_1\cup\Omega_2)\cup \bm{W}_h\cup V_h\cup  M_h\cup  \tilde{M}_{h}(0)$, 
\begin{align*}
L_1(\psi) &:= \langle ((Q_{k}^{\Gamma}u-u)\bm{n},\psi \rangle_{ *,\Gamma} , \\
L_2(\psi) &:= \langle (\bm{Q}_{k-1}\bm{q}-\bm{q})\cdot\bm{n},\psi \rangle_{\partial\mathcal{T}_h\setminus \mathcal{\varepsilon}_h^{\Gamma}}+\langle \tau (Q_{k}^bu-Q_{k}u),\psi \rangle_{\partial\mathcal{T}_h\setminus \mathcal{\varepsilon}_h^{\Gamma}} , \\
L_3(\psi) &:= \langle (\bm{Q}_{k-1}\bm{q}-\bm{q}),\psi\bm{n} \rangle_{*,\Gamma}+ \langle \eta (Q_{k}^{\Gamma}u-Q_{k}u),\psi \rangle_{*,\Gamma}.
\end{align*}
Then we  have the following results.
\begin{lem}\label{errorequation}
	For any $(\bm{w},v,\mu,\tilde{\mu})\in \bm{W}_h\times V_h\times M_h(0)\times \tilde{M}_{h}(0)$,  it holds
	\begin{subequations}\label{3.4abcd}
		\begin{align}
		(\alpha^{-1}\bm{e}_h^q,\bm{w})_{\mathcal{T}_h}+(e_h^u,\nabla_h\cdot \bm{w})_{\mathcal{T}_h}-\langle e_h^{\hat{u}},\bm{w}\cdot\bm{n} \rangle_{\partial \mathcal{T}_h\setminus\mathcal{\varepsilon}_h^{\Gamma}} -\langle e_h^{\tilde{u}}\bm{n},\bm{w} \rangle_{*,\Gamma} &= L_1(\bm{w}), \label{errorequation1}\\
		-(\nabla_h\cdot\bm{e}_h^q,v)_{\mathcal{T}_h}+ \langle \tau(e_h^u-e_h^{\hat{u}}),v\rangle_{\partial\mathcal{T}_h\setminus \mathcal{\varepsilon}_h^{\Gamma}} +  \langle \eta(e_h^u-e_h^{\tilde{u}}),v\rangle_{*,\Gamma}  &= L_2(v) +L_3(v),\label{errorequation2}\\
		\langle \bm{e}_h^q\cdot\bm{n},\mu\rangle_{\partial\mathcal{T}_h\setminus \mathcal{\varepsilon}_h^{\Gamma}}-\langle \tau(e_h^u-e_h^{\hat{u}}),\mu\rangle_{\partial\mathcal{T}_h\setminus  \mathcal{\varepsilon}_h^{\Gamma}}&= -L_2(\mu),\label{errorequation3}\\
		\langle \bm{e}_h^q,\tilde{\mu}\bm{n}\rangle_{*,\Gamma}-\langle \eta(e_h^u-e_h^{\tilde{u}}),\tilde{\mu}\rangle_{*,\Gamma}&= -L_3(\tilde{\mu}) \label{errorequation4}.
		\end{align}
	\end{subequations}

\end{lem}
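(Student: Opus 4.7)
The plan is to derive each of the four identities by the standard HDG consistency-subtract-project argument. Since $u$ is single-valued across each inter-element face in $\varepsilon_h^*$, $\bm{q}\cdot\bm{n}$ is single-valued there, $u=g$ on $\partial\Omega$, and $\llbracket\bm{q}\cdot\bm{n}\rrbracket=g_N$ on $\Gamma$, the exact solution $(\bm{q},u)$ satisfies the exact analogues of \eqref{X-HDG1}-\eqref{X-HDG5} with $(\bm{q}_h,u_h,\hat u_h,\tilde u_h)$ replaced by $(\bm{q},u,u|_{\varepsilon_h^*},u|_\Gamma)$ and with all stabilization terms identically zero. I would subtract those consistency identities from \eqref{X-HDGscheme}, insert the splittings $\bm{q}_h=\bm{e}_h^q+\bm{Q}_{k-1}\bm{q}$, $u_h=e_h^u+Q_k u$, $\hat u_h=e_h^{\hat u}+Q_k^b u$, $\tilde u_h=e_h^{\tilde u}+Q_k^\Gamma u$, and move the resulting projection discrepancies to the right-hand side, where they assemble into $L_1,L_2,L_3$.

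For \eqref{errorequation1}, the key ingredients are: (i) the piecewise-constant character of $\alpha^{-1}$ combined with the piecewise-defined orthogonality of $\bm{Q}_{k-1}$ against $\bm{w}\in\bm{W}_h$ gives $(\alpha^{-1}\bm{Q}_{k-1}\bm{q},\bm{w})_{\mathcal{T}_h}=(\alpha^{-1}\bm{q},\bm{w})_{\mathcal{T}_h}=(\nabla u,\bm{w})_{\mathcal{T}_h}$ via $\bm{q}=\alpha\nabla u$; (ii) piecewise integration by parts over each $K$, and over each $K\cap\Omega_i$ for cut elements, produces boundary contributions on $\partial\mathcal{T}_h\setminus\varepsilon_h^\Gamma$ and on $\Gamma$; (iii) the orthogonality of $Q_k$ against $\nabla_h\cdot\bm{w}$ (piecewise in $P_{k-2}$) and of $Q_k^b$ against $\bm{w}\cdot\bm{n}$ (piecewise in $P_{k-1}$ on each face piece) annihilates the interior and inter-element discrepancies, leaving only the interface residual $\langle(Q_k^\Gamma u-u)\bm{n},\bm{w}\rangle_{*,\Gamma}=L_1(\bm{w})$.

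The remaining three equations use $-\nabla\cdot\bm{q}=f$ piecewise. For \eqref{errorequation2} and \eqref{errorequation3}, I would integrate $(\nabla_h\cdot(\bm{Q}_{k-1}\bm{q}-\bm{q}),v)_{\mathcal{T}_h}$ by parts piecewise; the volume term $((\bm{Q}_{k-1}\bm{q}-\bm{q}),\nabla_h v)_{\mathcal{T}_h}$ vanishes by orthogonality of $\bm{Q}_{k-1}$ against $\nabla_h v\in\oplus\chi_iP_{k-1}$, and the surviving boundary terms combine with the stabilization discrepancies $\langle\tau(Q_k^b u-Q_k u),\cdot\rangle$ and $\langle\eta(Q_k^\Gamma u-Q_k u),\cdot\rangle$ to produce $L_2(v)+L_3(v)$ on the right of \eqref{errorequation2} and $-L_2(\mu)$ on the right of \eqref{errorequation3} (the sign reflecting the opposite sign of the stabilization in \eqref{X-HDG4}). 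For \eqref{errorequation4} I would use that $\langle\bm{q},\tilde\mu\bm{n}\rangle_{*,\Gamma}=\langle g_N,\tilde\mu\rangle_{*,\Gamma}$ for every $\tilde\mu\in\tilde M_h(0)$; this consistency identity combines $\llbracket\bm{q}\cdot\bm{n}\rrbracket=g_N$ with $\tilde\mu_1=\tilde\mu_2$ pointwise on each $F\in\varepsilon_h^\Gamma$, which is forced by the defining orthogonality $\langle\llbracket\tilde\mu\rrbracket,\mu^*\rangle_F=0$ against all $\mu^*\in P_k(K)|_F=P_k(F)$. A final subtraction then yields $-L_3(\tilde\mu)$.

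I expect the interface bookkeeping to be the most delicate step. Before the computation one must check that $Q_k^\Gamma u\in\tilde M_h(g_D)$, so that $e_h^{\tilde u}\in\tilde M_h(0)$ is an admissible test function: in Case~1 this is immediate since $\llbracket Q_k^\Gamma u\rrbracket$ equals $Q_k^b\llbracket u\rrbracket$ and matches $g_D$ tested against $P_k(F)$, while in Case~2 the averaging construction of $u^*_F$ gives $\llbracket Q_k^\Gamma u\rrbracket=0$ as required by $g_D=0$. Beyond this, the fact that $Q_k^\Gamma u$ is only an average (not a genuine projection) in Case~2 causes no difficulty, because $L_1$ and $L_3$ are defined precisely to collect the resulting interface consistency residuals without demanding any orthogonality from $Q_k^\Gamma u$.
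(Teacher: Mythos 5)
Your proposal is correct and follows essentially the same route as the paper: write the projected exact solution into the left-hand sides of \eqref{X-HDGscheme} using the first-order system, integration by parts and the $L^2$-orthogonality of $\bm{Q}_{k-1}$, $Q_k$, $Q_k^b$, then subtract the discrete equations so that the surviving projection residuals assemble into $L_1,L_2,L_3$. You supply more detail than the paper's terse proof (which writes out only the first two consistency identities and handles the rest by "similarly"), and your closing remark on the admissibility of $e_h^{\tilde u}$ as a test function, while not needed for this lemma itself, correctly anticipates the step where it is used in Lemma \ref{errorterm}.
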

\begin{proof}
	From \eqref{firstorderscheme},  the  definitions  of the projections, and integration by parts, % in (\ref{erroroperator}), 
	we obtain 
	\begin{eqnarray*}
		(\alpha^{-1}\bm{Q}_{k-1}\bm{q},\bm{w})_{\mathcal{T}_h}+(Q_{k}u,\nabla_h\cdot \bm{w})_{\mathcal{T}_h}-\langle Q_{k}^bu,\bm{w}\cdot\bm{n} \rangle_{\partial \mathcal{T}_h\setminus\mathcal{\varepsilon}_h^{\Gamma}} -\langle Q_{k}^{\Gamma}u\bm{n},\bm{w}\rangle_{*,\Gamma}=\langle (u-Q_{k}^{\Gamma}u)\bm{n},\bm{w} \rangle_{ *,\Gamma},  \forall \bm{w}\in \bm{W},
		\\
		-(\nabla_h\cdot\bm{Q}_{k-1}\bm{q},v)_{\mathcal{T}_h} +\langle (\bm{Q}_{k-1}\bm{q}-\bm{q})\cdot\bm{n},v \rangle_{\partial \mathcal{T}_h\setminus\mathcal{\varepsilon}_h^{\Gamma}}+\langle (\bm{Q}_{k-1}\bm{q}-\bm{q}),v\bm{n} \rangle_{*,\Gamma}= (f,v)_{\mathcal{T}_h}, \quad \forall  v\in  V_h.
	\end{eqnarray*}
	Subtracting (\ref{X-HDG1}) and (\ref{X-HDG2}) from the above two equations respectively yields (\ref{errorequation1}) and (\ref{errorequation2}). Similarly, the relations (\ref{errorequation3}) and  (\ref{errorequation4}) follow from  \eqref{firstorderscheme},(\ref{X-HDG4}) and  (\ref{X-HDG5}).
\end{proof}

Define a semi-norm $\interleave \cdot\interleave$ on $ \bm{W}_h\times V_h\times M_h\times \tilde{M}_h$ by 
\begin{align}\label{|||-norm}
\interleave (\bm{w},v,\mu,\tilde{\mu})\interleave^2 : = \lVert \alpha^{-1/2}\bm{w}\rVert_{0,\mathcal{T}_h}^2 +\lVert \tau^{1/2}(v-\mu)\rVert_{0,\partial\mathcal{T}_h\setminus \mathcal{\varepsilon}_h^{\Gamma}}^2+\lVert \eta^{1/2}(v-\tilde{\mu})\rVert_{*,\Gamma}^2   
\end{align}
for any $ (\bm{w},v,\mu,\tilde{\mu})\in \bm{W}_h\times V_h\times M_h\times \tilde{M}_h,$ where
	\begin{align*}
	\lVert w\rVert_{0,\mathcal{T}_h}^2 := \sum_{K\in\mathcal{T}_h}\sum_{i = 1}^{2}\lVert w\rVert_{0,K\cap\Omega_i}^2,  \quad 
	\lVert w\rVert_{0,\partial\mathcal{T}_h\setminus \mathcal{\varepsilon}_h^{\Gamma}}^2 := \sum_{F\in\partial\mathcal{T}_h\setminus\varepsilon_h^{\Gamma}}\sum_{i = 1}^{2}\lVert w\rVert_{0,F\cap\Omega_i}^2,  \quad 
	\lVert w\rVert_{*,\Gamma}^2 := \langle w,w\rangle_{*,\Gamma}.  
	\end{align*}

\begin{lem}\label{errorterm} %\label{estimategrade}
	For any $h \in (0, h_0]$, it hold
	\begin{align} \label{id3.5-estimategrade_1}
	 \interleave (\bm{e}_h^q,e_h^u,e_h^{\hat{u}},e_h^{\tilde{\mu}})\interleave  &=\left( \sum_{i=1}^{3}E_i\right)^{1/2}, \\
		\lVert  \alpha^{1/2}\nabla_h e_h^u\rVert_{0,\mathcal{T}_h} &\apprle 
		\left\{ \begin{array}{ll}
		\interleave (\bm{e}_h^q,e_h^u,e_h^{\hat{u}},e_h^{\tilde{\mu}})\interleave +h^{-1/2}\lVert \nu^{1/2}(u-Q_k^{\Gamma}u) \rVert_{*,\Gamma}, \quad $ if interface is not a fold line$,\\
		\interleave (\bm{e}_h^q,e_h^u,e_h^{\hat{u}},e_h^{\tilde{\mu}})\interleave , \quad $ if interface is a fold line$
		\end{array}\right.
	\end{align}
	where 
	\begin{align*}
	E_1&= \langle (Q_{k}^{\Gamma}u-u)\bm{n},\bm{e}_h^q \rangle_{*,\Gamma} ,\\ 
	E_2&= \langle (\bm{Q}_{k-1}\bm{q}-\bm{q})\cdot\bm{n}, e_h^u-e_h^{\hat{u}}\rangle_{\partial \mathcal{T}_h\setminus\mathcal{\varepsilon}_h^{\Gamma}}+\langle (\bm{Q}_{k-1}\bm{q}-\bm{q}),(e_h^u-e_h^{\tilde{u}})\bm{n} \rangle_{*,\Gamma} , \\
	E_3 &= \langle \tau (Q_{k}^bu-Q_{k}u),e_h^u-e_h^{\hat{u}} \rangle_{\partial \mathcal{T}_h\setminus\mathcal{\varepsilon}_h^{\Gamma}}+ \langle \eta (Q_{k}^{\Gamma}u-Q_{k}u),e_h^u-e_h^{\tilde{u}} \rangle_{*,\Gamma}.
	\end{align*} 
\end{lem}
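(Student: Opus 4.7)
The plan is to prove the two assertions by two different choices of test functions in the error system (3.4) of Lemma \ref{errorequation}.

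\textbf{Identity for the semi-norm.} First I would take $(\bm{w},v,\mu,\tilde{\mu}) = (\bm{e}_h^q, e_h^u, e_h^{\hat{u}}, e_h^{\tilde{u}})$ in \eqref{errorequation1}--\eqref{errorequation4} and add the four equations. The volume term $(e_h^u,\nabla_h\cdot\bm{e}_h^q)_{\mathcal{T}_h}$ from \eqref{errorequation1} cancels $-(\nabla_h\cdot\bm{e}_h^q,e_h^u)_{\mathcal{T}_h}$ from \eqref{errorequation2}; the inter-element trace terms $-\langle e_h^{\hat u},\bm{e}_h^q\cdot\bm n\rangle$ from \eqref{errorequation1} cancel $\langle\bm{e}_h^q\cdot\bm n,e_h^{\hat u}\rangle$ from \eqref{errorequation3}; and the interface terms involving $e_h^{\tilde u}$ from \eqref{errorequation1} and \eqref{errorequation4} cancel against each other. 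The surviving left-hand side collapses to
\begin{equation*}
(\alpha^{-1}\bm{e}_h^q,\bm{e}_h^q)_{\mathcal{T}_h} + \langle \tau(e_h^u-e_h^{\hat u}), e_h^u-e_h^{\hat u}\rangle_{\partial\mathcal{T}_h\setminus\mathcal{\varepsilon}_h^\Gamma} + \langle \eta(e_h^u-e_h^{\tilde u}), e_h^u-e_h^{\tilde u}\rangle_{*,\Gamma},
\end{equation*}
which is exactly $\interleave(\bm{e}_h^q,e_h^u,e_h^{\hat u},e_h^{\tilde u})\interleave^2$, while the right-hand side assembles into $L_1(\bm{e}_h^q) + L_2(e_h^u-e_h^{\hat u}) + L_3(e_h^u-e_h^{\tilde u}) = E_1 + E_2 + E_3$ after unfolding the definitions of $L_2,L_3$.

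\textbf{Bound on $\|\alpha^{1/2}\nabla_h e_h^u\|_{0,\mathcal{T}_h}$.} Next I would choose $\bm{w}=\alpha\nabla_h e_h^u$ as the test function in \eqref{errorequation1}. A quick check of the XFE space confirms $\bm{w}\in\bm{W}_h$: on $K\in\mathcal{T}_h\setminus\mathcal{T}_h^{\Gamma}$, $\alpha$ is constant and $\nabla e_h^u\in P_{k-1}(K)^d$; on $K\in\mathcal{T}_h^\Gamma$, $\alpha\nabla_h e_h^u|_{K\cap\Omega_i}\in P_{k-1}(K\cap\Omega_i)^d$ so $\bm{w}\in(\oplus\chi_i P_{k-1}(K))^d$. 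Integration by parts separately on $K\cap\Omega_1$ and $K\cap\Omega_2$, with the outward normals $\bm{n}_i$ matching the convention \eqref{<n>_G}, yields
\begin{equation*}
(e_h^u,\nabla_h\cdot(\alpha\nabla_h e_h^u))_{\mathcal{T}_h} = -\|\alpha^{1/2}\nabla_h e_h^u\|_{0,\mathcal{T}_h}^2 + \langle e_h^u,\alpha\nabla_h e_h^u\cdot\bm n\rangle_{\partial\mathcal{T}_h\setminus\mathcal{\varepsilon}_h^\Gamma} + \langle \alpha\nabla_h e_h^u, e_h^u\bm n\rangle_{*,\Gamma}.
\end{equation*}
Substituting back into \eqref{errorequation1} and rearranging expresses $\|\alpha^{1/2}\nabla_h e_h^u\|_{0,\mathcal{T}_h}^2$ as the sum $(\alpha^{-1}\bm{e}_h^q,\alpha\nabla_h e_h^u)_{\mathcal{T}_h} + \langle e_h^u-e_h^{\hat u},\alpha\nabla_h e_h^u\cdot\bm n\rangle_{\partial\mathcal{T}_h\setminus\mathcal{\varepsilon}_h^\Gamma} + \langle \alpha\nabla_h e_h^u,(e_h^u-e_h^{\tilde u})\bm n\rangle_{*,\Gamma} - L_1(\alpha\nabla_h e_h^u)$. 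A Cauchy--Schwarz on the first term is immediate, and the two boundary pairings are controlled using the discrete trace inequality of Lemma \ref{trace} together with the definitions $\tau,\eta\sim\alpha_i h_K^{-1}$, which convert $\|h^{1/2}\alpha^{1/2}\nabla_h e_h^u\|$ on faces/interface into $\|\alpha^{1/2}\nabla_h e_h^u\|_{0,\mathcal{T}_h}$ and match the $\tau^{1/2}$, $\eta^{1/2}$ weights in the semi-norm.

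\textbf{The $L_1$ term and case split.} The decisive observation is the polynomial orthogonality hidden in $L_1$. When $\Gamma$ is a fold line/plane, $\bm n$ is constant on each $\Gamma_K$, so $\alpha\nabla_h e_h^u\cdot\bm n|_{\Gamma_K\cap\bar\Omega_i}\in P_{k-1}$; since $Q_k^\Gamma u|_{\Gamma_K\cap\bar\Omega_i} = Q_k^b u|_{\Gamma_K\cap\bar\Omega_i}$ by \eqref{Q^Ga}, the $L^2$ orthogonality of $Q_k^b$ kills $L_1(\alpha\nabla_h e_h^u)$ entirely. In the general piecewise $C^2$ case, $\bm n$ varies on $\Gamma_K$ and this orthogonality is lost; I would then estimate $|L_1(\alpha\nabla_h e_h^u)|\lesssim h^{-1/2}\|\nu^{1/2}(u-Q_k^\Gamma u)\|_{*,\Gamma}\|\alpha^{1/2}\nabla_h e_h^u\|_{0,\mathcal{T}_h}$ via Cauchy--Schwarz and Lemma \ref{trace}. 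Absorbing the factor $\|\alpha^{1/2}\nabla_h e_h^u\|_{0,\mathcal{T}_h}$ on the left by Young's inequality yields the two stated bounds.

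The main obstacle is exactly the $L_1$ term: it is only in the fold case that polynomial exactness forces it to vanish, while for a curved $\Gamma$ the varying $\bm n$ produces an unavoidable interface perturbation of order $h^{-1/2}\|\nu^{1/2}(u-Q_k^\Gamma u)\|_{*,\Gamma}$. Secondary care is needed in the integration by parts on interface elements, where one must split each $K$ into the two subdomain pieces and use \eqref{<>_G}--\eqref{<n>_G} so that the two-valued contributions on $\Gamma_K$ are correctly paired with the outward normals $\bm n_i$.
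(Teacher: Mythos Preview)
Your proposal is correct and follows essentially the same approach as the paper's own proof: testing the error system with the errors to obtain the identity, then testing \eqref{errorequation1} with $\bm w=\alpha\nabla_h e_h^u$, integrating by parts, and applying Cauchy--Schwarz together with Lemma~\ref{trace}. Your explicit orthogonality argument showing that $L_1(\alpha\nabla_h e_h^u)=0$ in the fold line/plane case (via $Q_k^\Gamma u=Q_k^b u$ on straight $\Gamma_K$ and $\alpha_i\nabla e_h^u\cdot\bm n_i\in P_{k-1}(F)\subset P_k(F)$) is a useful elaboration of what the paper leaves implicit with the remark ``the case of fold line interface is easier''.
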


\begin{proof}
	We  first show the identity
	$$\interleave (\bm{e}_h^q,e_h^u,e_h^{\hat{u}},e_h^{\tilde{\mu}})\interleave  =\left( \sum_{i=1}^{3}E_i\right)^{1/2}.$$ Take $(\bm{w},v,\mu ,\tilde{\mu}) = (\bm{e}_h^q,e_h^u,e_h^{\hat{u}},e_h^{\tilde{\mu}})$ in \eqref{3.4abcd} and sum the obtained four error equations, we then get   
	\begin{align*}
	\interleave (\bm{e}_h^q,e_h^u,e_h^{\hat{u}},e_h^{\tilde{\mu}})\interleave^2 = L_1(\bm{e}_h^q)+ L_2(e_h^u-e_h^{\hat{u}})+L_3(e_h^u-e_h^{\hat{u}}),%\sum_{i=1}^{3}E_i,
	\end{align*}
	which, together with  the definitions of $L_i(\cdot)$ ($i=1,2,3$), yields the desired identity.
	
  The case of fold line interface is easier, so we only prove the case when interface is not a fold line, i.e. 
	\begin{align}\label{left<=}
\lVert  \alpha^{1/2}\nabla_h e_h^u\rVert_{0,\mathcal{T}_h} &\apprle \interleave (\bm{e}_h^q,e_h^u,e_h^{\hat{u}},e_h^{\tilde{\mu}})\interleave +h^{-\frac{1}{2}}\lVert \nu^{1/2}(u-Q_k^{\Gamma}u) \rVert_{*,\Gamma}.
	\end{align}
	On one hand, taking $\bm{w} = \alpha\nabla e_h^u$ in (\ref{errorequation1}) and applying integration by parts, we obtain
	\begin{align*}
	(\bm{e}_h^q,\nabla e_h^u)_{\mathcal{T}_h}-(\alpha\nabla e_h^u, \nabla e_h^u)_{\mathcal{T}_h}+\langle \alpha (e_h^u-e_h^{\hat{u}}),\nabla e_h^u\cdot\bm{n} \rangle_{\partial\mathcal{T}_h\setminus \mathcal{\varepsilon}_h^{\Gamma}} +\langle \alpha (e_h^u-e_h^{\tilde{u}})\bm{n},\nabla e_h^u \rangle_{*,\Gamma}&= \langle (Q_k^{\Gamma}u-u)\bm{n},\nabla e_h^u\rangle_{*,\Gamma},
	\end{align*}
	Then, by the  Cauchy-Schwarz inequality and Lemma \ref{trace} we have
	\begin{align*}
	\lVert \alpha^{1/2}\nabla e_h^u\rVert_{0,\mathcal{T}_h} \leq \lVert \alpha^{-1/2}\bm{e}_h^q\rVert_{0,\mathcal{T}_h}+\lVert  \tau^{1/2}(e_h^u-e_h^{\hat{u}})\rVert_{0,\partial\mathcal{T}_h\setminus \mathcal{\varepsilon}_h^{\Gamma}} +\lVert  \eta^{1/2}(e_h^u-e_h^{\tilde{u}})\rVert_{*,\Gamma}+h^{-\frac{1}{2}} \lVert \nu^{1/2}(u-Q_k^{\Gamma}u) \rVert_{*,\Gamma}.
	\end{align*}
	Combining the definition of $\interleave \cdot\interleave$, which indicates  (\ref{left<=}).  This  completes the proof.
\end{proof}

In light of Lemma \ref{ineq}, Lemma \ref{errorterm} and the definition of $\interleave \cdot\interleave$, we can derive the following optimal error estimates.
\begin{thm}\label{estimateofenergynorm}
	Let $(u,\bm{q})\in H^{k+1}(\Omega_1\cup \Omega_2)\times H^k(\Omega_1\cup \Omega_2)^d $ and $(\bm{q}_h,u_h,\hat{u}_h,\tilde{u}_h)\in \bm{W}_h\times V_h\times M_h(g)\times \tilde{M}_h$ be the solutions of the  problem (\ref{firstorderscheme}) and the X-HDG scheme (\ref{X-HDGscheme}), respectively.
	Then  the following error estimate holds for any $h \in (0, h_0]$:
	\begin{align}
	\interleave (\bm{e}_h^q,e_h^u,e_h^{\hat{u}},e_h^{\tilde{\mu}})\interleave 
	\apprle h^k\lvert \alpha^{1/2}u\rvert_{k+1,\Omega_1\cup \Omega_2}. \label{est_add0}
	\end{align}
	Further more, it holds 
	\begin{align} 
	\lVert \alpha^{-1/2}(\bm{q}-\bm{q}_h)\rVert_{0,\mathcal{T}_h} +\lVert \alpha^{1/2}(\nabla_h u-\nabla_h u_h)\rVert_{0,\mathcal{T}_h} \apprle h^k\lvert \alpha^{1/2}u\rvert_{k+1,\Omega_1\cup \Omega_2}. \label{est_add}
	\end{align}
\end{thm}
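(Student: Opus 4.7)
The plan is to start from the identity $\interleave (\bm{e}_h^q,e_h^u,e_h^{\hat{u}},e_h^{\tilde{u}})\interleave^2 = E_1+E_2+E_3$ provided by \Cref{errorterm}, bound each $E_i$ by a product of a ``data'' factor of order $h^k|\alpha^{1/2}u|_{k+1}$ and a ``residual'' factor controlled by $\interleave\cdot\interleave$, and then absorb the residual factor via Young's inequality. Once \eqref{est_add0} is established, estimate \eqref{est_add} will follow from the triangle inequality, the approximation bounds of \Cref{ineq} applied to $\bm{q}-\bm{Q}_{k-1}\bm{q}$ and $u-Q_k u$, and the second inequality of \Cref{errorterm}.

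For $E_1=\langle (Q_k^\Gamma u-u)\bm{n},\bm{e}_h^q\rangle_{*,\Gamma}$, I will apply Cauchy--Schwarz with the weight $\alpha^{\pm 1/2}$, estimate $\|\alpha^{1/2}(u-Q_k^\Gamma u)\|_{*,\Gamma}\apprle h^{k+1/2}|\alpha^{1/2}u|_{k+1}$ by \Cref{ineq} (using, on curved pieces, the averaging in \eqref{Q^Ga} together with \Cref{trace}), and convert the boundary norm of the discrete quantity $\bm{e}_h^q$ into a volume norm via the polynomial trace inequality in \Cref{trace}, yielding $\|\alpha^{-1/2}\bm{e}_h^q\|_{*,\Gamma}\apprle h^{-1/2}\|\alpha^{-1/2}\bm{e}_h^q\|_{0,\mathcal{T}_h}$. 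This produces $|E_1|\apprle h^{k}|\alpha^{1/2}u|_{k+1}\interleave\cdot\interleave$. For $E_2$ I will rewrite the inner products so that the stabilization weight $\tau^{1/2}$ (resp.\ $\eta^{1/2}$) multiplies the discrete residual $e_h^u-e_h^{\hat u}$ (resp.\ $e_h^u-e_h^{\tilde u}$), while the complementary weight $\tau^{-1/2}=\alpha^{-1/2}h_K^{1/2}$ multiplies $\bm{Q}_{k-1}\bm{q}-\bm{q}$; combining $\|\bm{Q}_{k-1}\bm{q}-\bm{q}\|_{0,\partial K}\apprle h_K^{k-1/2}|\bm{q}|_{k,K}$ from \Cref{ineq} with the identity $\bm{q}=\alpha\nabla u$ (so that $|\bm{q}|_{k}=|\alpha^{1/2}\cdot\alpha^{1/2}\nabla u|_{k}$ yields $|\alpha^{1/2}u|_{k+1}$ after distributing $\alpha^{1/2}$) gives $|E_2|\apprle h^k|\alpha^{1/2}u|_{k+1}\interleave\cdot\interleave$. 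For $E_3$ the definitions \eqref{stablizationpara1}--\eqref{stablizationpara2} give $\tau^{1/2}=\alpha^{1/2}h_K^{-1/2}$ and $\eta^{1/2}=\alpha^{1/2}h_K^{-1/2}$, so $\|\tau^{1/2}(Q_k^bu-Q_ku)\|_{0,\partial K}\apprle \alpha^{1/2}h_K^{-1/2}\cdot h_K^{k+1/2}|u|_{k+1,K}$ and similarly on $\Gamma_K$, again producing the same bound.

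Combining the three bounds in the identity and applying Young's inequality $ab\leq \frac12 a^2+\frac12 b^2$ to absorb $\interleave\cdot\interleave$ into the left-hand side yields \eqref{est_add0}. Then \eqref{est_add} follows by the splittings
\begin{align*}
\|\alpha^{-1/2}(\bm{q}-\bm{q}_h)\|_{0,\mathcal{T}_h}&\leq \|\alpha^{-1/2}(\bm{q}-\bm{Q}_{k-1}\bm{q})\|_{0,\mathcal{T}_h}+\|\alpha^{-1/2}\bm{e}_h^q\|_{0,\mathcal{T}_h},\\
\|\alpha^{1/2}(\nabla u-\nabla_h u_h)\|_{0,\mathcal{T}_h}&\leq \|\alpha^{1/2}(\nabla u-\nabla_h Q_k u)\|_{0,\mathcal{T}_h}+\|\alpha^{1/2}\nabla_h e_h^u\|_{0,\mathcal{T}_h},
\end{align*}
where \Cref{ineq} controls the projection errors by $h^k|\alpha^{1/2}u|_{k+1}$, the $\bm{e}_h^q$ term is bounded by \eqref{est_add0}, and $\|\alpha^{1/2}\nabla_h e_h^u\|_{0,\mathcal{T}_h}$ is bounded by the second inequality of \Cref{errorterm}, whose extra term $h^{-1/2}\|\nu^{1/2}(u-Q_k^\Gamma u)\|_{*,\Gamma}$ is of the right order by \Cref{ineq} (and vanishes in the fold line case).

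The main obstacle I anticipate is the treatment of $E_1$ on non-straight interface pieces: the projection $Q_k^\Gamma u$ is defined in \eqref{Q^Ga} as an average of two element-wise $L^2$-projections rather than as an interface projection, so the boundary approximation estimate $\|u-Q_k^\Gamma u\|_{*,\Gamma}\apprle h^{k+1/2}|u|_{k+1}$ must be obtained by first applying \Cref{trace} on each $\Gamma_K$ to each of the two pieces and then combining, rather than by a direct trace-type estimate on a fitted face. This is the step where the condition $h\in(0,h_0]$ and the geometric hypotheses \textbf{(A1)}--\textbf{(A3)} play their essential role; every other step is a routine Cauchy--Schwarz/inverse-inequality manipulation on polynomial residuals against standard projection errors.
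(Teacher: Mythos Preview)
Your proposal is correct and follows essentially the same route as the paper: start from the identity in \Cref{errorterm}, bound each $E_i$ by Cauchy--Schwarz with the $\alpha^{\pm 1/2}$ (equivalently $\tau^{\pm 1/2}$, $\eta^{\pm 1/2}$) weights and the projection estimates of \Cref{ineq} combined with the discrete trace inequality of \Cref{trace}, then divide through (or use Young) to get \eqref{est_add0}; finally obtain \eqref{est_add} by the triangle inequality, \Cref{ineq}, and the second estimate of \Cref{errorterm}. Your explicit isolation of the curved-interface step for $E_1$ via \eqref{Q^Ga} and \Cref{trace} is exactly the point the paper absorbs into the phrase ``property of projection''.
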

\begin{proof}
	In view of  Lemma \ref{errorterm},  we need to estimate the terms $E_1, E_2$ and $E_3$.
with Cauchy-Schwarz inequality and th property of projection, we have
	\begin{align*}
	E_1  = \langle \alpha^{1/2}(Q_{k}^{\Gamma}u-u)\bm{n},\alpha^{-1/2}\bm{e}_h^q \rangle_{*,\Gamma} \apprle h^k\lvert \alpha^{1/2}u\rvert_{k+1,\Omega_1\cup \Omega_ 2} \lVert \alpha^{-1/2}\bm{e}_h^q\rVert_{0,\Omega}.
	\end{align*}
	Similarly, we can obtain
	\begin{align*}
	E_2  \apprle & h^k\lvert \alpha^{-1/2}q\rvert_{k,\Omega_1\cup \Omega_ 2} (\lVert \tau^{1/2}(e_h^u-e_h^{\hat{u}})\rVert_{0,\partial\mathcal{T}_h\setminus \mathcal{\varepsilon}_h^{\Gamma}}+\lVert \eta^{1/2}(e_h^u-e_h^{\tilde{u}})\rVert_{*,\Gamma}),\\
	E_3  = &\langle \tau(Q_{k}^bu-Q_{k}u),e_h^u-e_h^{\hat{u}} \rangle_{\partial \mathcal{T}_h\setminus\mathcal{\varepsilon}_h^{\Gamma}}+ \langle \eta (Q_{k}^{\Gamma}u-Q_ku),e_h^u-e_h^{\tilde{u}} \rangle_{*,\Gamma} \\
	\leq &\left(\lVert \tau^{1/2}(Q_{k}^bu-Q_{k}u)\rVert_{0,\partial\mathcal{T}_h\setminus \mathcal{\varepsilon}_h^{\Gamma}}+ \lVert \eta^{1/2}(Q_{k}^{\Gamma}u-Q_{k}u)\rVert _{*,\Gamma}\right)\left(\lVert \tau^{1/2}(e_h^u-e_h^{\hat{u}})\rVert_{0,\partial\mathcal{T}_h\setminus \mathcal{\varepsilon}_h^{\Gamma}}+\lVert \eta^{1/2}(e_h^u-e_h^{\tilde{u}})\rVert_{*,\Gamma}\right) \\
	\apprle & h^{k} \lvert \alpha^{1/2}u\rvert_{k+1,\Omega_1\cup \Omega_ 2} \left(\lVert \tau^{1/2}(e_h^u-e_h^{\hat{u}})\rVert_{0,\partial\mathcal{T}_h\setminus \mathcal{\varepsilon}_h^{\Gamma}}+\lVert \eta^{1/2}(e_h^u-e_h^{\tilde{u}})\rVert_{*,\Gamma}\right).
	\end{align*}
	The above three inequalities and Lemma \ref{errorterm} imply the estimate \eqref{est_add0}. And the estimate \eqref{est_add} follows from  \eqref{est_add0}, the triangle inequality and Lemma \ref{ineq}.
\end{proof}

\begin{rem}\label{est_1norm}
	From (\ref{est_add}) we easily get 
	\begin{align}
	\lVert \bm{q}-\bm{q}_h\rVert_{0,\mathcal{T}_h}\apprle & \alpha_{max}h^k\lvert u\rvert_{k+1,\Omega_1\cup \Omega_2},\label{est_add3}\\
	\lVert \nabla_h u- \nabla_h u_h\rVert_{0,\mathcal{T}_h} \apprle &(\frac{\alpha_{max}}{\alpha_{min}})^{1/2}h^k\lvert u\rvert_{k+1,\Omega_1\cup \Omega_2}\label{est_add2}.
	\end{align}
	Here   $\alpha_{max} = \max\limits_{i=1,2}{\alpha_i}$ and $\alpha_{min} = \min\limits_{i=1,2}{\alpha_i}$.  We recall that  $a\apprle b$   denotes $a\leq Cb$  with   $C$  being  a generic positive constant   independent of mesh parameters $h, h_K, h_e$,   the coefficients $\alpha_i$ $(i =1,2)$ and the  location of the interface relative to the mesh.
\end{rem}

\begin{rem}
We note that the condition $h\in(0,h_0]$ for some $h_0$ in Theorem \ref{estimateofenergynorm}  is not required when $\Gamma$ is a fold line/plane; see Remark \ref{rem-h0}.
\end{rem}
   To analyze  the modified X-HDG scheme \eqref{X-HDGscheme1}, we need to modify the semi-norm $\interleave (\cdot,\cdot,\cdot,\cdot)\interleave$ in \eqref{|||-norm}  and the errors  $\bm{e}_h^q,e_h^u,e_h^{\hat{u}},e_h^{\tilde{\mu}}$ in \eqref{erroroperator} respectively as
	\begin{align}
	\interleave (\bm{w},v,\mu,\tilde{\mu})\interleave^2 : = \lVert \alpha^{-1/2}\bm{w}\rVert_{0,\mathcal{T}_h}^2 +\lVert \tau^{1/2}(Q_{k-1}^bv-\mu)\rVert_{0,\partial\mathcal{T}_h\setminus \mathcal{\varepsilon}_h^{\Gamma}}^2+\lVert \eta^{1/2}(Q_{k-1}^bv-\tilde{\mu})\rVert_{*,\Gamma}^2,\\
		\bm{e}_h^q: = \bm{q}_h - \bm{Q}_{k-1}\bm{q}, \quad  e_h^u: = u_h - Q_{k}u, \quad  e_h^{\hat{u}}: = \hat{u}_h - Q_{k-1}^bu, \quad  e_h^{\tilde{u}}: = \tilde{u}_h - Q_{k-1}^bu. \label{e_h^q}
		\end{align}
Then, by following the same line as in the proof of  Theorem \ref{estimateofenergynorm},    we can obtain the following conclusion.

\begin{thm}\label{energy-modified}%\label{estimateofenergynorm1}
	Let $(u,\bm{q})\in H^{k+1}(\Omega_1\cup \Omega_2)\times H^k(\Omega_1\cup \Omega_2)^d $ and $(\bm{q}_h,u_h,\hat{u}_h,\tilde{u}_h)\in \bm{W}_h\times V_h\times M_h(g)\times \tilde{M}_h$ be the solutions of the  problem (\ref{firstorderscheme}) and the modified X-HDG scheme (\ref{X-HDGscheme1}), respectively. Then the estimates  \eqref{est_add0} -\eqref{est_add2}  still hold. 
\end{thm}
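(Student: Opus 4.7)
The plan is to follow the same routine as in the proof of Theorem \ref{estimateofenergynorm}, with modifications accounting for the reduced polynomial degree ($P_{k-1}$ rather than $P_k$) used for the numerical traces $\hat{u}_h, \tilde{u}_h$, and for the appearance of the $Q_{k-1}^b$ projection in the stabilization terms of \eqref{X-HDGscheme1}.

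First, I would establish an analogue of the error equations in Lemma \ref{errorequation}. Using the redefined errors from \eqref{e_h^q} together with integration by parts, this yields a system of four error equations with modified linear residuals $L_i(\cdot)$. The crucial observation is that, since $u_h\in P_k$ inside elements while $\hat{u}_h,\tilde{u}_h\in P_{k-1}$ on the faces/interface, the identity
\begin{align*}
Q_{k-1}^b u_h - \hat{u}_h = Q_{k-1}^b e_h^u + Q_{k-1}^b(Q_k u - u) - e_h^{\hat{u}}
\end{align*}
shows that the projection-induced residual in the stabilization involves $Q_{k-1}^b(Q_k u-u)$ rather than $u-Q_{k-1}^b u$; by the $L^2$-stability of $Q_{k-1}^b$ on faces, this residual is bounded by $\lVert Q_k u-u\rVert_{0,\partial K}\apprle h_K^{k+1/2}\lvert u\rvert_{k+1,K}$ thanks to Lemma \ref{ineq}, which is one order better than a direct bound on $u-Q_{k-1}^b u$.

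Next, taking test functions $(\bm{w},v,\mu,\tilde{\mu})=(\bm{e}_h^q,e_h^u,e_h^{\hat{u}},e_h^{\tilde{u}})$ and summing the four error equations produces an energy identity
\begin{align*}
\interleave(\bm{e}_h^q,e_h^u,e_h^{\hat{u}},e_h^{\tilde{u}})\interleave^2 = E_1+E_2+E_3,
\end{align*}
where the $E_i$ are analogues of those in Lemma \ref{errorterm} with the appropriate order-$(k-1)$ projections in place of the order-$k$ ones. Each $E_i$ can be bounded via Cauchy--Schwarz together with Lemma \ref{ineq}, using $\lVert\bm{Q}_{k-1}\bm{q}-\bm{q}\rVert_{0,\partial K}\apprle h_K^{k-1/2}\lvert\bm{q}\rvert_{k,K}$, $\lVert Q_k u-u\rVert_{0,\partial K}\apprle h_K^{k+1/2}\lvert u\rvert_{k+1,K}$, and $\lvert\bm{q}\rvert_{k,\Omega_1\cup\Omega_2}\apprle \alpha_{max}\lvert u\rvert_{k+1,\Omega_1\cup\Omega_2}$. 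Summing contributions and absorbing the resulting factors of $\interleave\cdot\interleave$ yields $\interleave(\bm{e}_h^q,e_h^u,e_h^{\hat{u}},e_h^{\tilde{u}})\interleave \apprle h^k\lvert\alpha^{1/2}u\rvert_{k+1,\Omega_1\cup\Omega_2}$, which is \eqref{est_add0}. The estimates \eqref{est_add}--\eqref{est_add2} then follow by the triangle inequality and by testing with $\bm{w}=\alpha\nabla_h e_h^u\in\bm{W}_h$ in the first error equation, exactly as at the end of Lemma \ref{errorterm}.

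The main obstacle is to verify that the order-reduction from $P_k$ to $P_{k-1}$ for the numerical traces does not spoil the optimal convergence rate. This is delicate because a direct estimate of $u-Q_{k-1}^b u$ on element boundaries only yields $O(h^{k-1/2})$, which after multiplication by $\tau^{1/2}=O(h^{-1/2})$ would give only the suboptimal rate $O(h^{k-1})$. The remedy is the factorization displayed above, which reroutes the projection error through the higher-order residual $Q_k u - u$ and exploits the $L^2$-stability of $Q_{k-1}^b$; this is precisely the reason for the specific form $\tau(Q_{k-1}^b u_h-\hat{u}_h)$ chosen for the stabilization in the modified scheme.
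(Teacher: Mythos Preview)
Your proposal is correct and follows precisely the approach the paper indicates (the paper itself simply states ``by following the same line as in the proof of Theorem~\ref{estimateofenergynorm}'' without further detail). Your identification of the key point---that the stabilization residual becomes $Q_{k-1}^b(Q_k u-u)$ rather than $u-Q_{k-1}^b u$, thereby preserving the optimal rate---is exactly the mechanism that makes the modified scheme work, and is confirmed by the form of $I_2$ in the paper's proof of Theorem~\ref{L2-modified}.
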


\section{$L^2$ error  estimation for the numerical potential}

In this section, we shall   perform the Aubin-Nitsche duality argument  to derive the $L^2$ error estimation for the potential approximation $u_h$ in the schemes (\ref{X-HDGscheme}) and   (\ref{X-HDGscheme1}). 

For the scheme (\ref{X-HDGscheme}), let us introduce the auxiliary problem
\begin{align}
\label{dual problem}
\left \{
\begin{array}{rl}
\bm{\Phi} = \alpha \nabla \phi &\quad   {\rm in} \ \Omega_1\cup\Omega_2,\\
- \nabla\cdot \bm{\Phi} = u-u_h & \quad  {\rm in} \ \Omega_1\cup\Omega_2,\\
\phi=0& \quad {\rm on} \ \partial\Omega,\\
\llbracket \phi\rrbracket= 0, \  \llbracket\bm{\Phi}\cdot \bm{n}\rrbracket = 0&\quad {\rm on} \  \Gamma,
\end{array}
\right.
\end{align} 
and assume the   regularity estimate
\begin{align}\label{regularestimate}
\lVert \bm{\Phi}\rVert_{1,\Omega_1\cup\Omega_2} +\lVert \alpha\phi\rVert_{2,\Omega_1\cup\Omega_2} \apprle \lVert u-u_h\rVert_{0,\mathcal{T}_h}.
\end{align}
We note that this regularity result holds when $\Omega$ is convex and   $\Gamma$ is $C^2$ (cf.\cite{Huang2012Uniform},Theorem 4.5), and it is   sharp in terms of the coefficient $\alpha$.

In light of the auxiliary problem \eqref{dual problem}, we can obtain the following conclusion:
\begin{thm}\label{L2}
	Let $(u,\bm{q})\in H^{k+1}(\Omega_1\cup \Omega_2)\times H^k(\Omega_1\cup \Omega_2)^d $ and $(\bm{q}_h,u_h,\hat{u}_h,\tilde{u}_h)\in \bm{W}_h\times V_h\times M_h(g)\times \tilde{M}_h$ be the solutions of the  problem (\ref{firstorderscheme}) and the X-HDG scheme (\ref{X-HDGscheme}), respectively. Under the regularity assumption \eqref{regularestimate},   for any $h \in (0, h_0]$ it holds
	\begin{align}\label{estimategeneralinterface}
	\lVert u-u_h\rVert_{0,\mathcal{T}_h} \apprle (\frac{\alpha_{max}}{\alpha_{min}})^{1/2}h^{k+1} \lVert u\rVert_{k+1,\Omega_1\cup\Omega_2}
	\end{align}
for either of the  following two cases:  (1)  the interface $\Gamma$ is a  fold line/plane such that $\Gamma_K$ is a straight  line/plane segment, i.e.  $\Gamma_K=\Gamma_{K,h} $,  for any $K\in  \mathcal{T}_h^{\Gamma}$; (2) $g_D = 0$ when $\Gamma$ is not a fold line/plane.
\end{thm}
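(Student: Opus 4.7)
The plan is an Aubin--Nitsche duality argument adapted to the double-valued interface traces. I would begin by testing the dual equation $-\nabla\cdot\bm{\Phi}=u-u_h$ with $u-u_h$, then perform elementwise integration by parts, exploiting $\phi=0$ on $\partial\Omega$, $\llbracket\phi\rrbracket=0$ and $\llbracket\bm{\Phi}\cdot\bm{n}\rrbracket=0$ on $\Gamma$ to rewrite every boundary sum in terms of the numerical traces of $u_h$. After using $\bm{\Phi}=\alpha\nabla\phi$, this yields a duality identity that expresses $\lVert u-u_h\rVert_{0,\mathcal{T}_h}^2$ as a combination of $(\alpha^{-1}\bm{e}_h^q,\bm{\Phi})_{\mathcal{T}_h}$, of interior and inter-element pairings of $e_h^u-e_h^{\hat u}$ with the continuous trace of $\bm{\Phi}$, and of interface pairings of $e_h^u-e_h^{\tilde u}$ with $\bm{\Phi}\cdot\bm{n}$, plus residuals coming from replacing $u-u_h$ by $(Q_ku-u)+e_h^u$.

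Next, the key move is to plug a discrete test tuple built from the dual into Lemma~\ref{errorequation}. Since $\phi$ vanishes on $\partial\Omega$ and is continuous across $\Gamma$, the projections
\[
(\bm{w},v,\mu,\tilde{\mu})=\bigl(\bm{Q}_{k-1}\bm{\Phi},\ Q_{k}\phi,\ Q_{k}^{b}\phi,\ Q_{k}^{\Gamma}\phi\bigr)
\]
lie in $\bm{W}_h\times V_h\times M_h(0)\times \tilde{M}_h(0)$ and are admissible. Summing the four error equations \eqref{3.4abcd} evaluated on this tuple and subtracting the result from the duality identity cancels all the ``unprojected'' contributions: what survives is a combination of the projection errors $\bm{\Phi}-\bm{Q}_{k-1}\bm{\Phi}$, $\phi-Q_k\phi$, $\phi-Q_k^b\phi$, $\phi-Q_k^{\Gamma}\phi$ paired either with projection errors of $u$ or with the discrete error $(\bm{e}_h^q,\,e_h^u-e_h^{\hat u},\,e_h^u-e_h^{\tilde u})$.

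Applying Cauchy--Schwarz together with the projection bounds of Lemma~\ref{ineq} and the trace inequality Lemma~\ref{trace}, each remaining term is estimated by
\[
h\bigl(\lVert\bm{\Phi}\rVert_{1,\Omega_1\cup\Omega_2}+\lVert\alpha\phi\rVert_{2,\Omega_1\cup\Omega_2}\bigr)\Bigl(\interleave(\bm{e}_h^q,e_h^u,e_h^{\hat{u}},e_h^{\tilde{u}})\interleave+h^{k}\lvert\alpha^{1/2}u\rvert_{k+1,\Omega_1\cup\Omega_2}\Bigr).
\]
The energy estimate of Theorem~\ref{estimateofenergynorm} bounds the $\interleave\cdot\interleave$ factor by $h^k\lvert\alpha^{1/2}u\rvert_{k+1}$, and the regularity assumption \eqref{regularestimate} replaces $\lVert\bm{\Phi}\rVert_{1}+\lVert\alpha\phi\rVert_{2}$ by $\lVert u-u_h\rVert_{0,\mathcal{T}_h}$; dividing by $\lVert u-u_h\rVert_{0,\mathcal{T}_h}$ then yields the claimed $h^{k+1}$ rate. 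The factor $(\alpha_{\max}/\alpha_{\min})^{1/2}$ appears when one converts between the $\alpha$-weighted stability built into $\tau,\eta$ and the unweighted norms on the source and on the dual side. The modified scheme \eqref{X-HDGscheme1} is treated by the same recipe after updating \eqref{e_h^q} to the degree-$(k-1)$ projections of the trace variables.

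The main obstacle lies in the interface terms attached to $Q_k^{\Gamma}\phi$. On curved pieces of $\Gamma$ this projection is defined in \eqref{Q^Ga} as the \emph{average} $u^*_F$ of the two one-sided projections, so it only retains the full approximation order against functions with no jump across $\Gamma$; for the dual $\phi$ this is automatic because $\llbracket\phi\rrbracket=0$, but for the primal $u$ the pairing $\phi-Q_k^{\Gamma}\phi$ against the residual of $u-Q_k^{\Gamma}u$ only keeps the optimal rate when $\llbracket u\rrbracket=g_D=0$, which is precisely the restriction imposed in Case~(2). In Case~(1) every $\Gamma_K$ is a flat segment, so $Q_k^{\Gamma}$ reduces to a genuine componentwise $L^2$ projection and the difficulty disappears. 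Quantifying the interface integrals of $\bm{\Phi}-\bm{Q}_{k-1}\bm{\Phi}$ and of $\phi-Q_k^{\Gamma}\phi$ uniformly in the geometric parameter $\gamma$ of \eqref{gamma}, using both halves of Lemma~\ref{trace}, is the delicate technical step where the hypotheses of the theorem are genuinely needed.
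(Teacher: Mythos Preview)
Your overall plan is the paper's Aubin--Nitsche argument, and routing it through the error equations of Lemma~\ref{errorequation} rather than the bilinear form $B_h$ the paper introduces is an equivalent bookkeeping choice; the ingredients you list (Lemma~\ref{ineq}, Lemma~\ref{trace}, Theorem~\ref{estimateofenergynorm}, assumption~\eqref{regularestimate}) are exactly the ones used.

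Where your sketch goes astray is in naming the term that forces the case distinction. You point to a pairing ``$\phi-Q_k^{\Gamma}\phi$ against $u-Q_k^{\Gamma}u$'', but no such term is the bottleneck: since $\llbracket\phi\rrbracket=0$, the quantity $\phi-Q_k^{\Gamma}\phi$ already carries a full power of $h$ regardless of the geometry, and it is paired with \emph{discrete} trace errors, not with $u-Q_k^{\Gamma}u$. In the paper's decomposition the delicate contribution is
\[
I_5=-\langle\bm{\Phi},(u-\tilde{u}_h)\bm{n}\rangle_{*,\Gamma}-\langle\bm{\Phi}\cdot\bm{n},u-\hat{u}_h\rangle_{\partial\Omega},
\]
a residual that survives because the dual flux $\bm{\Phi}$ does not vanish on $\partial\Omega$ or on $\Gamma$ when \eqref{4}--\eqref{5} are tested with the full trace errors. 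Its interface part is handled by inserting the averaged projection $\{\{\bm{Q}_{k-1}^{\Gamma}\bm{\Phi}\}\}$ together with the \emph{flat} normal $\bm{n}_c$ of $\Gamma_{K,h}$: one shows $\langle\{\{\bm{Q}_{k-1}^{\Gamma}\bm{\Phi}\}\},(u-\tilde{u}_h)\bm{n}_c\rangle_{*,\Gamma}=0$, in Case~(1) because $\bm{n}=\bm{n}_c$ and the constraint in the definition of $\tilde{M}_h(g_D)$ makes $g_D-\llbracket\tilde{u}_h\rrbracket$ orthogonal to $P_k(K)|_F$, and in Case~(2) because $\llbracket u\rrbracket=\llbracket\tilde{u}_h\rrbracket=0$. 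The leftover $(\bm{n}-\bm{n}_c)$ piece is then $O(h)$ by assumption~(A3). So the hypothesis is used to annihilate a term through the \emph{space constraint on $\tilde{u}_h$}, not merely to rescue an approximation estimate for $Q_k^{\Gamma}u$. If you carry out your subtraction in detail you will see the same obstruction appear (in your organization it surfaces when $L_1$ is evaluated at $\bm{Q}_{k-1}\bm{\Phi}$ and then $\bm{\Phi}$ is split off), and its resolution needs exactly this $\bm{n}_c$/$\tilde{M}_h(g_D)$ device rather than the mechanism you describe.
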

\begin{proof}
 For simplicity, we define, for any $(\bm{\sigma},\xi,\hat{\xi},\tilde{\xi}),(\bm{w},v,\mu,\tilde{\mu})\in L^2(\Omega)^d\times L^2(\Omega)\times L^2(\varepsilon_h^*)\times L^2(\mathcal{\varepsilon}_h^{\Gamma})$, 
	\begin{align*}
	B_h(\bm{\sigma},\xi,\hat{\xi},\tilde{\xi};\bm{w},v,\mu,\tilde{\mu}) :
	=& 	(\alpha^{-1}\bm{\sigma},\bm{w})_{\mathcal{T}_h} + (\xi,\nabla_h\cdot \bm{w})_{\mathcal{T}_h} - \langle \hat{\xi},\bm{w}\cdot \bm{n}\rangle_{\partial\mathcal{T}_h\setminus \mathcal{\varepsilon}_h^{\Gamma}} -\langle\bm{w}, \tilde{\xi}\bm{n}\rangle_{*,\Gamma}  \\
	&-(\nabla_h\cdot\bm{\sigma},v)_{\mathcal{T}_h}+ \langle \tau(\xi-\hat{\xi}),v-\mu\rangle_{\partial\mathcal{T}_h\setminus \mathcal{\varepsilon}_h^{\Gamma}} +  \langle \eta(\xi-\tilde{\xi}),v-\tilde{\mu}\rangle_{*,\Gamma}  \\
&+	\langle \bm{\sigma}\cdot\bm{n},\mu\rangle_{\partial \mathcal{T}_h\setminus (\partial \Omega\cup \mathcal{\varepsilon}_h^{\Gamma})} +\langle \bm{\sigma},\tilde{\mu}\bm{n}\rangle_{*,\Gamma} .
%+ \langle \llbracket \tilde{\xi}\rrbracket,\tilde{\mu}\rangle_{*,\Gamma}
	\end{align*}
From (\ref{firstorderscheme}) and  (\ref{X-HDGscheme}) it follows
	\begin{align*} B_h(\bm{q}-\bm{q}_h,u-u_h,u-\hat{u}_h,u-\tilde{u}_h;\bm{w}_h,v_h,\mu_h,\tilde{\mu}_h) = 0,\  \forall (\bm{w}_h,v_h,\mu_h,\tilde{\mu}_h)\in \bm{W}_h\times V_h\times M_h(0)\times \tilde{M}_{h}(0).
	\end{align*} 
	By \eqref{dual problem}  we have, for  $(\bm{w},v,\mu,\tilde{\mu})\in L^2(\Omega)^d\times L^2(\Omega)\times L^2(\varepsilon_h^*)\times L^2(\mathcal{\varepsilon}_h^{\Gamma})$, 
	\begin{subequations}\label{X-HDGscheme-a}
		\begin{align}
		(\alpha^{-1}\bm{\Phi},\bm{w})_{\mathcal{T}_h} + (\phi,\nabla_h\cdot \bm{w})_{\mathcal{T}_h} - \langle \phi,\bm{w}\cdot \bm{n}\rangle_{\partial\mathcal{T}_h\setminus \mathcal{\varepsilon}_h^{\Gamma}} -\langle\bm{w}, \phi\bm{n}\rangle_{*,\Gamma} &= 0, \label{1}\\
		-(\nabla_h\cdot\bm{\Phi},v)_{\mathcal{T}_h}+ \langle \tau(\phi-\phi),v\rangle_{\partial\mathcal{T}_h\setminus \mathcal{\varepsilon}_h^{\Gamma}} +  \langle \eta(\phi-\phi),v\rangle_{*,\Gamma}  &= (u-u_h,v)_{\mathcal{T}_h} ,\label{2}\\
		\langle \bm{\Phi}\cdot\bm{n},\mu\rangle_{\partial \mathcal{T}_h\setminus  \mathcal{\varepsilon}_h^{\Gamma}}-\langle \tau(\phi-\phi),\mu\rangle_{\partial \mathcal{T}_h\setminus  \mathcal{\varepsilon}_h^{\Gamma}} &= \langle \bm{\Phi}\cdot\bm{n},\mu\rangle_{\partial\Omega},\label{4}\\
		\langle \bm{\Phi},\tilde{\mu}\bm{n}\rangle_{*,\Gamma}-\langle \eta( \phi-\phi),\tilde{\mu}\rangle_{*,\Gamma}&= \langle \bm{\Phi},\tilde{\mu}\bm{n}\rangle_{*,\Gamma} \label{5}.
		\end{align}
	\end{subequations}
Take   $\bm{w}=\bm{q}-\bm{q}_h,v=u-u_h,\mu=u-\hat{u}_h,\tilde{\mu}=u-\tilde{u}_h:= \{u|_{{\Omega_{1}}\cap\Gamma}-u_{h1},u|_{{\Omega_{2}}\cap\Gamma}-u_{h2}\}$ in the above four equations, and add the equations all together, then we have 
	\begin{align*}
	&\lVert u-u_h\rVert_{0,\mathcal{T}_h}^2 \\
	=& B_h(\bm{\Phi},\phi,\phi,\phi;\bm{q}-\bm{q}_h,u-u_h,u-\hat{u}_h,u-\tilde{u}_h)- \langle \bm{\Phi},(u-\tilde{u}_h)\bm{n}\rangle_{*,\Gamma} -\langle \bm{\Phi}\cdot\bm{n},u-\hat{u}_h\rangle_{\partial\Omega}   \\
	=&  B_h(-(\bm{q}-\bm{q}_h),u-u_h,u-\hat{u}_h,u-\tilde{u}_h;-\bm{\Phi},\phi,\phi,\phi)  -\langle \bm{\Phi},(u-\tilde{u}_h)\bm{n}\rangle_{*,\Gamma}-\langle \bm{\Phi}\cdot\bm{n},u-\hat{u}_h\rangle_{\partial\Omega}\\
	=& B_h(-(\bm{q}-\bm{q}_h),u-u_h,u-\hat{u}_h,u-\tilde{u}_h;-(\bm{\Phi}-\bm{Q}_{k-1}\bm{\Phi}),\phi-Q_{k}\phi,\phi-Q_{k}^b\phi,\phi-\{\{Q_{k}^{\Gamma}\phi\}\})  \\
	& - \langle \bm{\Phi},(u-\tilde{u}_h)\bm{n}\rangle_{*,\Gamma} -\langle \bm{\Phi}\cdot\bm{n},(u-\hat{u}_h)\rangle_{\partial\Omega}\\
=& \sum_{i=1}^{5}I_i,
	\end{align*}
	where 
	\begin{align*}
	I_1 &= (\alpha^{-1}(\bm{q}-\bm{q}_h),(\bm{\Phi}-\bm{Q}_{k-1}\bm{\Phi}))_{\mathcal{T}_h}, \\
	I_2 &=  -(u-u_h,\nabla_h\cdot (\bm{\Phi}-\bm{Q}_{k-1}\bm{\Phi}))_{\mathcal{T}_h} + \langle u-\hat{u}_h,(\bm{\Phi}-\bm{Q}_{k-1}\bm{\Phi})\cdot \bm{n}\rangle_{\partial\mathcal{T}_h\setminus \mathcal{\varepsilon}_h^{\Gamma}} 
	+\langle (\bm{\Phi}-\bm{Q}_{k-1}\bm{\Phi}), (u-\tilde{u}_h)\bm{n}\rangle_{*,\Gamma},  \\
	I_3 &=  (\nabla_h\cdot(\bm{q}-\bm{q}_h),\phi-Q_{k}\phi)_{\mathcal{T}_h}  + \langle (\bm{q}-\bm{q}_h)\cdot\bm{n},\phi-Q_{k}^b\phi\rangle_{\partial \mathcal{T}_h\setminus (\partial \Omega\cup \mathcal{\varepsilon}_h^{\Gamma})} +\langle \bm{q}-\bm{q}_h,(\phi-\{\{Q_{k}^{\Gamma}\phi\}\})\bm{n}\rangle_{*,\Gamma},  \\
	I_4 &= \langle \tau(\hat{u}_h-u_h),Q_{k}^b\phi-Q_{k}\phi
	\rangle_{\partial\mathcal{T}_h\setminus \mathcal{\varepsilon}_h^{\Gamma}} + \langle
	\eta(\hat{u}_h-u_h),\{\{Q_{k}^{\Gamma}\phi\}\}-Q_{k}\phi\rangle_{*,\Gamma} ,\\
	I_5 &= -\langle \bm{\Phi},(u-\tilde{u}_h)\bm{n}\rangle_{*,\Gamma}-\langle \bm{\Phi}\cdot\bm{n},u-\hat{u}_h\rangle_{\partial\Omega},
	\end{align*}
and	we recall that  $\{\{\cdot\}\}$ and  $Q_{k}^{\Gamma}$ are defined in \eqref{average} and \eqref{Q^Ga}, respectively. 

The thing left is to  estimate the terms $I_i$ one by one. 
The Cauchy-Schwarz inequality and the projection property (cf. Lemma \ref{ineq}) indicate
	\begin{align*}
	I_1 \leq \lVert \alpha^{-1}(\bm{q}-\bm{q}_h)\rVert_{0,\mathcal{T}_h}  \lVert \bm{\Phi}-\bm{Q}_{k-1}\bm{\Phi}\rVert_{0,\mathcal{T}_h} \apprle h\lVert \alpha^{-1}(\bm{q}-\bm{q}_h)\rVert_{0,\mathcal{T}_h}  \lVert \bm{\Phi}\rVert_{1,\Omega_1\cup\Omega_2}.
	\end{align*}
 Similarly,    from integration by parts it follows
   \begin{align*}
   I_2
   =& (\nabla_h(u-u_h), (\bm{\Phi}-\bm{Q}_{k-1}\bm{\Phi}))_{\mathcal{T}_h} + \langle u_h-\hat{u}_h,(\bm{\Phi}-\bm{Q}_{k-1}\bm{\Phi})\cdot \bm{n}\rangle_{\partial\mathcal{T}_h\setminus \mathcal{\varepsilon}_h^{\Gamma}} 
   +\langle (\bm{\Phi}-\bm{Q}_{k-1}\bm{\Phi}), (u_h-\tilde{u}_h)\bm{n}\rangle_{*,\Gamma}  \\
   \apprle& h\lVert \nabla_h(u-u_h)\rVert_{0,\mathcal{T}_h} \lVert \bm{\Phi}\rVert_{1,\Omega_1\cup\Omega_2} +  h\interleave  \alpha^{-1/2} (\bm{e}_h^q,e_h^u,e_h^{\hat{u}},e_h^{\tilde{\mu}})\interleave \lVert \bm{\Phi}\rVert_{1,\Omega_1\cup\Omega_2},\\
%    \end{align*}
%    Similarly, 
%    \begin{align*}
    I_3 
    =& -(\bm{q}-\bm{q}_h,\nabla_h(\phi-Q_{k}\phi))_{\mathcal{T}_h} +\langle (\bm{q}-\bm{q}_h)\cdot\bm{n},Q_{k}\phi-Q_{k}^b\phi\rangle_{\partial \mathcal{T}_h\setminus (\partial \Omega\cup \mathcal{\varepsilon}_h^{\Gamma})} +\langle \bm{q}-\bm{q}_h,(Q_{k}\phi-\{\{Q_{k}^{\Gamma}\phi\}\})\bm{n}\rangle_{*,\Gamma}  \\
   \apprle & h\lVert \alpha^{-1}(\bm{q}-\bm{q}_h)\rVert_{0,\mathcal{T}_h}  \lVert \alpha \phi\rVert_{2,\Omega_1\cup\Omega_2}.
    \end{align*}
And, by the definitions of $e_h^u$ and $ e_h^{\hat{u}}$ in \eqref{e_h^q}, we obtain
    \begin{align*}
    I_4 
    =&\langle \tau(e_h^u-e_h^{\hat{u}}),Q_{k}^b\phi-Q_{k}\phi\rangle_{\partial\mathcal{T}_h\setminus \mathcal{\varepsilon}_h^{\Gamma}} + \langle \tau((u-Q_ku)-(u-Q_{k}^bu)),Q_{k}^b\phi-Q_{k}\phi \rangle_{\partial\mathcal{T}_h\setminus \mathcal{\varepsilon}_h^{\Gamma}} \\
    &+\langle \eta(e_h^u-e_h^{\hat{u}}),\{\{Q_{k}^{\Gamma}\phi\}\}-Q_{k}\phi \rangle_{*,\Gamma} + \langle \eta((u-Q_ku)-(u-Q_{k}^{\Gamma}u)),\{\{Q_{k}^{\Gamma}\phi\}\}-Q_{k}\phi\rangle_{*,\Gamma} \\
    \apprle & h^{-1/2}\lVert \alpha(Q_{k}^b\phi-Q_{k}\phi)\rVert_{\partial\mathcal{T}_h\setminus \mathcal{\varepsilon}_h^{\Gamma}} \left(h^{-1/2}\lVert u-Q_ku\rVert_{\partial\mathcal{T}_h\setminus \mathcal{\varepsilon}_h^{\Gamma}} +  \lVert \alpha^{-1/2}\tau^{1/2}(e_h^u-e_h^{\hat{u}})\rVert_{\partial\mathcal{T}_h\setminus \mathcal{\varepsilon}_h^{\Gamma}}  \right) \\
    &+ h^{-1/2}\lVert \alpha(\{\{Q_{k}^{\Gamma}\phi\}\}-Q_{k}\phi)\rVert_{*,\Gamma} \left( h^{-1/2}\lVert u-Q_ku\rVert_{*,\Gamma} +  \lVert \alpha^{-1/2}\eta^{1/2}(e_h^u-e_h^{\hat{u}})\rVert_{*,\Gamma} \right) \\
    \apprle & h\lVert \alpha \phi\rVert_{2,\Omega_1\cup\Omega_2} \left(\interleave  \alpha^{-1/2} (\bm{e}_h^q,e_h^u,e_h^{\hat{u}},e_h^{\tilde{\mu}})\interleave +h^k\lVert u\rVert_{k+1,\Omega_1\cup\Omega_2}\right).
    \end{align*}
 Recall that $\Gamma_{K,h}$ is the straight line/plane segment connecting the intersection between $\Gamma_K$ and $\partial K$.  To estimate $I_5$, we   assume  $\bm{n}_c $ to be the unit normal vector along  $\Gamma_{K,h}$.  Note that for case (1) it holds that 
 \begin{align}\label{ncn}
 \Gamma_{K,h}= \Gamma_{K}, \quad \bm{n}_c=\bm{n}   .
 \end{align}
  Thus, by \eqref{<n>_G}  we get
    \begin{align*}
    I_5 =& -\langle \bm{\Phi}-\{\{\bm{Q}_{k-1}^{\Gamma}\bm{\Phi}\}\},(u-\tilde{u}_h)\bm{n}\rangle_{*,\Gamma} -\langle (\bm{\Phi}-\bm{Q}_{k-1}\bm{\Phi})\cdot\bm{n},(u-\hat{u}_h)\rangle_{\partial\Omega}\\
    &\quad -\langle \{\{\bm{Q}_{k-1}^{\Gamma}\bm{\Phi}\}\},(u-\tilde{u}_h)\bm{n}\rangle_{*,\Gamma} -\langle \bm{Q}_{k-1}\bm{\Phi}\cdot\bm{n},(u-\hat{u}_h)\rangle_{\partial\Omega} \\
    =&-\langle \bm{\Phi}-\{\{\bm{Q}_{k-1}^{\Gamma}\bm{\Phi}\}\},(u-\tilde{u}_h)\bm{n}\rangle_{*,\Gamma} -\langle \{\{\bm{Q}_{k-1}^{\Gamma}\bm{\Phi}\}\},(u-\tilde{u}_h)(\bm{n}-\bm{n}_c)\rangle_{*,\Gamma} -\langle (\bm{\Phi}-\bm{Q}_{k-1}\bm{\Phi})\cdot\bm{n},(u-\hat{u}_h)\rangle_{\partial\Omega}\\
    &\quad -\langle \{\{\bm{Q}_{k-1}^{\Gamma}\bm{\Phi}\}\},(u-\tilde{u}_h)\bm{n}_c\rangle_{*,\Gamma} -\langle \bm{Q}_{k-1}\bm{\Phi}\cdot\bm{n},(u-\hat{u}_h)\rangle_{\partial\Omega} \\
      =&-\langle \bm{\Phi}-\{\{\bm{Q}_{k-1}^{\Gamma}\bm{\Phi}\}\},(u-\tilde{u}_h)\bm{n}\rangle_{*,\Gamma} -\langle \{\{\bm{Q}_{k-1}^{\Gamma}\bm{\Phi}\}\},(u-\tilde{u}_h)(\bm{n}-\bm{n}_c)\rangle_{*,\Gamma} -\langle (\bm{\Phi}-\bm{Q}_{k-1}\bm{\Phi})\cdot\bm{n},(u-\hat{u}_h)\rangle_{\partial\Omega}\\
    &\quad -\langle \{\{\bm{Q}_{k-1}^{\Gamma}\bm{\Phi}\}\},(u-\tilde{u}_h)\bm{n}_c\rangle_{*,\Gamma},   
    \end{align*}
where we have used the fact that $ \langle \bm{Q}_{k-1}\bm{\Phi}\cdot\bm{n},(u-\hat{u}_h)\rangle_{\partial\Omega} =0$ due to the boundary condition \eqref{2.2c}  and the definition of $M_h(g)$.  On one hand,  from \eqref{ncn} and \eqref{gamma} it follows
 $$\langle \{\{\bm{Q}_{k-1}^{\Gamma}\bm{\Phi}\}\},(u-\tilde{u}_h)(\bm{n}-\bm{n}_c)\rangle_{*,\Gamma}\left\{
 \begin{array}{ll}
 =0 & \text{ for case (1)} ,\\
  \apprle   h\lVert \bm{\Phi}\rVert_{1,\Omega_{1}\cup\Omega_2} \interleave \alpha^{-1/2} (\bm{e}_h^q,e_h^u,e_h^{\hat{u}},e_h^{\tilde{\mu}})\interleave  & \text{otherwise.}
 \end{array}
  \right. $$
 On the other hand,  the relation
    $$ \langle \{\{\bm{Q}_{k-1}^{\Gamma}\bm{\Phi}\}\},(u-\tilde{u}_h)\bm{n}_c\rangle_{*,\Gamma}=0$$
    holds  for case (1) due to the definition of $\tilde M_h(g_D)$ and  the interface condition \eqref{2.2d}, and 
 for case (2)  due to    $g_D=0$.   As a result, by the projection property we obtain
   \begin{align*}
    I_5 
   \apprle & h\lVert \bm{\Phi}\rVert_{1,\Omega_{1}\cup\Omega_2}(h^k\lVert \alpha^{1/2}u\rVert_{k+1,\Omega_{1}\cup\Omega_2}+\interleave \alpha^{-1/2} (\bm{e}_h^q,e_h^u,e_h^{\hat{u}},e_h^{\tilde{\mu}})\interleave).
    \end{align*}

    The above estimates of $I_i$, together with the results \eqref{est_add0}-\eqref{est_add2} and the regularity assumption (\ref{regularestimate}), yield the desired conclusion \eqref{estimategeneralinterface}.
\end{proof}

		For the modified scheme (\ref{X-HDGscheme1}) where the interface $\Gamma$ is a fold line/plane, we  replace the second equation in the auxiliary  problem (\ref{dual problem}) with 
		\begin{equation}\label{rme1}
		- \nabla\cdot \bm{\Phi} = e_h^u \quad  {\rm in} \ \Omega_1\cup\Omega_2,
		\end{equation}
	and assume the modified regularity assumption
		\begin{align}\label{regularestimate1}
		\lVert \bm{\Phi}\rVert_{1,\Omega_1\cup\Omega_2} +\lVert \alpha\phi\rVert_{2,\Omega_1\cup\Omega_2} \apprle \lVert e_h^u \rVert_{0,\mathcal{T}_h}.
		\end{align}
		 We can follow the same  line as in the proof of Theorem \ref{L2} to derive  the following $L^2$ error estimation for  the modified scheme \eqref{X-HDGscheme1}. 
		   \begin{thm}	\label{L2-modified}
		   	Let $(u,\bm{q})\in H^{k+1}(\Omega_1\cup \Omega_2)\times H^k(\Omega_1\cup \Omega_2)^d $ and $(\bm{q}_h,u_h,\hat{u}_h,\tilde{u}_h)\in \bm{W}_h\times V_h\times M_h(g)\times \tilde{M}_h$ be the solutions of the  problem (\ref{firstorderscheme}) and the modified X-HDG scheme (\ref{X-HDGscheme1}), respectively.  Under the regularity assumption \eqref{regularestimate1},  it holds
		   	\begin{align}\label{err_L2modified}
		   	\lVert u-u_h\rVert_{0,\mathcal{T}_h} \apprle \left\{ \begin{array}{ll}
		   	(\frac{\alpha_{max}}{\alpha_{min}})^{1/2}h^{k+1} \lVert u\rVert_{H^{k+1}(\Omega_1\cup\Omega_2)} & \text{ if } g_N=0,\\\\
		   	(\frac{\alpha_{max}}{\alpha_{min}})^{1/2}h^{k} \lVert u\rVert_{H^{k+1}(\Omega_1\cup\Omega_2)} &\text{ otherwise},
		   	\end{array}
		   	\right.
		   	\end{align}
		   	where the interface $\Gamma$ is a fold line/plane.
		   \end{thm}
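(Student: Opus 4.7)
The proof mirrors that of Theorem~\ref{L2}. The only structural differences are that the source of the auxiliary dual problem is replaced by $e_h^u$ (cf.~\eqref{rme1}) so that the modified regularity estimate \eqref{regularestimate1} applies, and every occurrence of $Q_k^b$ on inter-element faces and of $\{\{Q_k^{\Gamma}\phi\}\}$ on the interface is uniformly replaced by $Q_{k-1}^b$ and $\{\{Q_{k-1}^b\phi\}\}$, to match the lower trace polynomial degree in $M_h^*$ and $\tilde M_h^*$ used by the modified scheme.

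First, I introduce a bilinear form $B_h^*$ for the scheme \eqref{X-HDGscheme1}, identical to $B_h$ of Theorem~\ref{L2} except that within every $\tau$- and $\eta$-stabilization bracket the argument $\xi$ is replaced by $Q_{k-1}^b\xi$. Checking equation by equation (using $\bm{q}=\alpha\nabla u$, $-\nabla\cdot\bm{q}=f$, $\llbracket u\rrbracket=g_D$, $\llbracket\bm{q}\cdot\bm{n}\rrbracket=g_N$, and integration by parts on each element in the same manner as in \eqref{X-HDGscheme-a}), the exact tuple $(\bm{q},u,u,u)$ reproduces the right-hand sides of \eqref{X-HDGscheme1}, so subtraction yields the Galerkin orthogonality
\[
B_h^*(\bm{q}-\bm{q}_h,u-u_h,u-\hat u_h,u-\tilde u_h;\bm{w}_h,v_h,\mu_h,\tilde\mu_h)=0, \quad \forall (\bm{w}_h,v_h,\mu_h,\tilde\mu_h)\in \bm{W}_h\times V_h\times M_h^*(0)\times\tilde M_h^*(0).
\]

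Second, I test the dual system (with source $e_h^u$) against $(\bm{q}-\bm{q}_h,u-u_h,u-\hat u_h,u-\tilde u_h)$ and sum the resulting four equations. On the left I obtain $\|e_h^u\|_{0,\mathcal{T}_h}^2$. On the right, using the orthogonality to insert the projection tuple $(\bm{Q}_{k-1}\bm{\Phi},Q_k\phi,Q_{k-1}^b\phi,\{\{Q_{k-1}^b\phi\}\})$, I obtain a decomposition $\|e_h^u\|_{0,\mathcal{T}_h}^2=\sum_{i=1}^5 I_i$ in which $I_1,\dots,I_5$ are the analogues of those in Theorem~\ref{L2} with $Q_{k-1}^b$ throughout. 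The terms $I_1,I_2,I_3,I_4$ are bounded by the Cauchy--Schwarz inequality, the projection and trace estimates of Lemma~\ref{ineq} and Lemma~\ref{trace}, the energy bound of Theorem~\ref{energy-modified}, and the regularity \eqref{regularestimate1}; in each case the projection error of $\bm{\Phi}\in H^1$ or $\alpha\phi\in H^2$ contributes one power of $h$ and the modified energy error contributes $h^k$, yielding at most $O(h^{k+1})\|e_h^u\|_{0,\mathcal{T}_h}$.

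The principal obstacle and the source of the dichotomy in \eqref{err_L2modified} is the residual $I_5=-\langle\bm{\Phi},(u-\tilde u_h)\bm{n}\rangle_{*,\Gamma}-\langle\bm{\Phi}\cdot\bm{n},u-\hat u_h\rangle_{\partial\Omega}$. Since $\Gamma$ is a fold line/plane, $\bm{n}_c=\bm{n}$ by \eqref{ncn} and the angular-defect contribution of Theorem~\ref{L2} is absent. Following the splitting there I decompose
\begin{align*}
I_5 = &-\langle\bm{\Phi}-\{\{\bm{Q}_{k-1}^{\Gamma}\bm{\Phi}\}\},(u-\tilde u_h)\bm{n}\rangle_{*,\Gamma} -\langle(\bm{\Phi}-\bm{Q}_{k-1}\bm{\Phi})\cdot\bm{n},u-\hat u_h\rangle_{\partial\Omega}\\
&-\langle\{\{\bm{Q}_{k-1}^{\Gamma}\bm{\Phi}\}\},(u-\tilde u_h)\bm{n}\rangle_{*,\Gamma} -\langle\bm{Q}_{k-1}\bm{\Phi}\cdot\bm{n},u-\hat u_h\rangle_{\partial\Omega}.
\end{align*}
The last term vanishes from $\phi|_{\partial\Omega}=0$ together with the definition of $M_h^*(g)$; the first two are handled by standard projection and trace estimates, combined with Theorem~\ref{energy-modified}, to produce $O(h^{k+1})\|e_h^u\|_{0,\mathcal{T}_h}$. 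The third piece is the hard one: rewriting its integrand in terms of the averages and jumps on each face of $\varepsilon_h^{\Gamma}$, and invoking the dual jump identity $\llbracket\bm{\Phi}\cdot\bm{n}\rrbracket=0$ together with the primal interface condition $\llbracket\bm{q}\cdot\bm{n}\rrbracket=g_N$, the contribution collapses via the definition of $\tilde M_h^*(g_D)$ (which only controls the jump of $\tilde u_h$ against $P_k$-test traces) into an expression that is identically zero when $g_N=0$, but otherwise leaves a residual proportional to $g_N$ paired against $\{\{\bm{Q}_{k-1}^{\Gamma}\bm{\Phi}\}\}$. In the former case the extra power of $h$ from the projection of $\bm{\Phi}$ propagates and one reaches $O(h^{k+1})$; in the latter case the degree $k-1$ traces forbid recovering this extra power, giving only $O(h^k)$. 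Combining with the regularity \eqref{regularestimate1}, dividing by $\|e_h^u\|_{0,\mathcal{T}_h}$, and applying the triangle inequality against $u-Q_k u$ delivers \eqref{err_L2modified}.
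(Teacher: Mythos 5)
Your skeleton (dual problem with source $e_h^u$, regularity \eqref{regularestimate1}, systematic replacement of $Q_k^b$ by $Q_{k-1}^b$) is the right starting point, but the proposal locates the $g_N$-dichotomy in the wrong term, and the accounting for the remaining terms is then false. The piece of $I_5$ you single out, $\langle \{\{\bm{Q}_{k-1}^{\Gamma}\bm{\Phi}\}\},(u-\tilde u_h)\bm{n}\rangle_{*,\Gamma}$, contains no primal flux at all: since $\{\{\bm{Q}_{k-1}^{\Gamma}\bm{\Phi}\}\}$ is single-valued and $\bm{n}_1=-\bm{n}_2$, its integrand reduces to $\{\{\bm{Q}_{k-1}^{\Gamma}\bm{\Phi}\}\}\cdot\bm{n}\,(g_D-\llbracket\tilde u_h\rrbracket)$, and the constraint defining $\tilde M_h^*(g_D)$ tests $\llbracket\tilde u_h\rrbracket-g_D$ against all of $P_k(K)|_F\supset P_{k-1}(K)|_F$, so this term vanishes identically for \emph{every} $g_N$. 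Invoking $\llbracket\bm{q}\cdot\bm{n}\rrbracket=g_N$ there is a non sequitur; the only interface condition relevant to that pairing is $\llbracket u\rrbracket=g_D$.

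The genuine obstruction — which your blanket claim that $I_1,\dots,I_4$ are all $O(h^{k+1})\lVert e_h^u\rVert_{0,\mathcal{T}_h}$ papers over — sits in the interface part of the consistency term, namely $\langle (\bm{Q}_{k-1}\bm{q}-\bm{q}),(\phi-Q_{k-1}^b\phi)\bm{n}\rangle_{*,\Gamma}$ (equivalently, the $\{\{Q_{k-1}^b\phi\}\}$ contribution inside your $I_3$). Because the trace projection now has degree $k-1$, Lemma \ref{ineq} only gives $\lVert \phi-Q_{k-1}^b\phi\rVert_{0,F}\apprle h^{\min(k,2)-1/2}\lVert\phi\rVert_{2}$, which for $k=1$ is merely $h^{1/2}$; paired with $\lVert\bm{Q}_{k-1}\bm{q}-\bm{q}\rVert_{*,\Gamma}\apprle h^{k-1/2}$ this yields only $O(h^{k})$, not $O(h^{k+1})$. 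The paper's proof rescues the extra power precisely when $g_N=0$: then $\bm{q}\cdot\bm{n}$ is single-valued across $\Gamma$ and $\llbracket\phi\rrbracket=0$, so the two one-sided integrals combine into $-\int_F(\phi-Q_{k-1}^b\phi)\llbracket\bm{q}\cdot\bm{n}\rrbracket=0$ after using the $L^2(F)$-orthogonality of $Q_{k-1}^b$ to discard $\bm{Q}_{k-1}\bm{q}\cdot\bm{n}\in P_{k-1}(F)$ (this uses that $F$ is straight). When $g_N\neq0$ the term survives and is the sole source of the $h^k$ rate in \eqref{err_L2modified}. Your proof must contain this cancellation mechanism; as written it asserts the strong bound exactly where it fails and compensates with a spurious $g_N$-residual elsewhere. (A secondary, non-fatal difference: the paper tests the dual problem against the discrete error tuple via the error equations, so it has only four terms and no $I_5$ at all.)
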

	   \begin{proof}
		   In fact, from the modified  auxiliary  problem with \eqref{rme1}, the definitions of $L_2(\cdot) $ and $ L_3(\cdot)$, and the projection properties we can similarly derive
			\begin{align*}
		\lVert e_h^u\rVert_{0,\mathcal{T}_h}^2
		=&(\alpha^{-1}\bm{e}_h^q,\bm{Q}_{k-1}\bm{\Phi}-\bm{\Phi})_{\mathcal{T}_h}+ L_2(Q_{k}\phi-Q_{k-1}^b\phi)+L_3(Q_{k}\phi-Q_{k-1}^b\phi)  \\
		&+\langle e_h^u-e_h^{\hat{u}},(\bm{Q}_{k-1}\bm{\Phi}-\bm{\Phi})\cdot \bm{n}\rangle_{\partial\mathcal{T}_h\setminus \mathcal{\varepsilon}_h^{\Gamma}} +\langle (e_h^u-e_h^{\tilde{u}})\bm{n},(\bm{Q}_{k-1}\bm{\Phi}-\bm{\Phi})\rangle_{*,\Gamma} \\
		&+\langle \tau(Q_{k-1}^be_h^u-e_h^{\hat{u}}),Q_{k-1}^b\phi-Q_{k}\phi\rangle_{\partial\mathcal{T}_h\setminus \mathcal{\varepsilon}_h^{\Gamma}}+\langle \eta(Q_{k-1}^be_h^u-e_h^{\hat{u}}),Q_{k-1}^b\phi-Q_{k}\phi\rangle_{*,\Gamma} \\
		=& \sum_{i=1}^{4}I_i,
		\end{align*}
		where
		\begin{align*}
		I_1 =&  (\alpha^{-1}\bm{e}_h^q,\bm{Q}_{k-1}\bm{\Phi}-\bm{\Phi})_{\mathcal{T}_h}, \\
		I_2 =& \langle (\bm{Q}_{k-1}\bm{q}-\bm{q})\cdot\bm{n},Q_{k}\phi - Q_{k-1}^b\phi \rangle_{\partial\mathcal{T}_h\setminus \mathcal{\varepsilon}_h^{\Gamma}}+\langle \tau Q_{k-1}^b(u-Q_{k}u),Q_{k}\phi - Q_{k-1}^b\phi \rangle_{\partial\mathcal{T}_h\setminus \mathcal{\varepsilon}_h^{\Gamma}} \\
		&+ \langle (\bm{Q}_{k-1}\bm{q}-\bm{q}),(Q_{k}\phi - Q_{k-1}^b\phi)\bm{n} \rangle_{*,\Gamma}+ \langle \eta Q_{k-1}^b(u-Q_{k}u),Q_{k}\phi - Q_{k-1}^b\phi \rangle_{*,\Gamma}, \\
		%L_2(Q_{k}\phi-Q_{k-1}^b\phi)+L_3(Q_{k}\phi-Q_{k-1}^b\phi) ,\\
		I_3 =& \langle e_h^u-e_h^{\hat{u}},(\bm{Q}_{k-1}\bm{\Phi}-\bm{\Phi})\cdot \bm{n}\rangle_{\partial\mathcal{T}_h\setminus \mathcal{\varepsilon}_h^{\Gamma}} +\langle (e_h^u-e_h^{\tilde{u}})\bm{n},(\bm{Q}_{k-1}\bm{\Phi}-\bm{\Phi})\rangle_{*,\Gamma}, \\
		I_4 =& \langle \tau(Q_{k-1}^be_h^u-e_h^{\hat{u}}),Q_{k-1}^b\phi-Q_{k}\phi\rangle_{\partial\mathcal{T}_h\setminus \mathcal{\varepsilon}_h^{\Gamma}}+\langle \eta(Q_{k-1}^be_h^u-e_h^{\hat{u}}),Q_{k-1}^b\phi-Q_{k}\phi\rangle_{*,\Gamma}.
		\end{align*}
		By the definition of $\interleave \cdot\interleave$, it is easy to obtain 
		%$L^2$ projection, Cauchy-Schwarz inequality and Theorem \ref{estimateofenergynorm}, we can obtain 
		\begin{align*}
		I_j 
		%\leq & \lVert \alpha^{-1}\bm{e}_h^q \rVert_{L^2(\Omega)} \lVert \bm{Q}_{k-1}\bm{\Phi}-\bm{\Phi} \rVert_{L^2(\Omega)}   \\
		\apprle %&h\lVert \alpha^{-1}\bm{e}_h^q \rVert_{L^2(\Omega)} \lVert \bm{\Phi} \rVert_{1,\Omega_{1}\cup\Omega_{2}}  \\
		%\leq 
		&h  \lVert \bm{\Phi} \rVert_{1,\Omega_{1}\cup\Omega_{2}} \interleave \alpha^{-1/2} (\bm{e}_h^q,e_h^u,e_h^{\hat{u}},e_h^{\tilde{\mu}})\interleave , \ j=1,3,\\
%		\end{align*}
%		Similarly, we have
%		\begin{align*}
%		I_3 \apprle& h\lVert \bm{\Phi} \rVert_{1,\Omega_{1}\cup\Omega_{2}}  \interleave \alpha^{-1/2}(\bm{e}_h^q,e_h^u,e_h^{\hat{u}},e_h^{\tilde{\mu}})\interleave  \\
			I_4 \apprle& h\lVert \alpha\phi \rVert_{2,\Omega_{1}\cup\Omega_{2}}  \interleave  \alpha^{-1/2} (\bm{e}_h^q,e_h^u,e_h^{\hat{u}},e_h^{\tilde{\mu}})\interleave.
		\end{align*}
		In light of the orthogonal property of projection $Q_{k-1}^b$,  it holds
	\begin{align*}
		I_2  
		= & \langle (\bm{Q}_{k-1}\bm{q}-\bm{q})\cdot\bm{n},Q_{k}\phi - Q_{k-1}^b\phi \rangle_{\partial\mathcal{T}_h\setminus \mathcal{\varepsilon}_h^{\Gamma}}
		+ \langle (\bm{Q}_{k-1}\bm{q}-\bm{q}),(Q_{k}\phi - Q_{k-1}^b\phi)\bm{n} \rangle_{*,\Gamma} \\
		&+\langle \tau Q_{k-1}^b(u-Q_{k}u),Q_{k}\phi - \phi \rangle_{\partial\mathcal{T}_h\setminus \mathcal{\varepsilon}_h^{\Gamma}}+ \langle \eta Q_{k-1}^b(u-Q_{k}u),Q_{k}\phi - \phi \rangle_{*,\Gamma}. \\
		 = &\left(\langle (\bm{Q}_{k-1}\bm{q}-\bm{q})\cdot\bm{n},Q_{k}\phi - \phi \rangle_{\partial\mathcal{T}_h\setminus \mathcal{\varepsilon}_h^{\Gamma}}
		 + \langle (\bm{Q}_{k-1}\bm{q}-\bm{q}),(Q_{k}\phi - \phi)\bm{n} \rangle_{*,\Gamma}\right) \\
		  &+ \langle (\bm{Q}_{k-1}\bm{q}-\bm{q})\cdot\bm{n},\phi - Q_{k-1}^b\phi \rangle_{\partial\mathcal{T}_h\setminus \mathcal{\varepsilon}_h^{\Gamma}}\\
		 & + \langle (\bm{Q}_{k-1}\bm{q}-\bm{q}),(\phi - Q_{k-1}^b\phi)\bm{n} \rangle_{*,\Gamma} \\
		  &+\left(\langle \tau Q_{k-1}^b(u-Q_{k}u),Q_{k}\phi - \phi \rangle_{\partial\mathcal{T}_h\setminus \mathcal{\varepsilon}_h^{\Gamma}}+ \langle \eta Q_{k-1}^b(u-Q_{k}u),Q_{k}\phi - \phi \rangle_{*,\Gamma}\right). \\
		  =:& \tilde I_1 +\tilde I_2 +\tilde I_3+\tilde I_4.
		  \end{align*}
	From the projection properties it follows
		  \begin{align*}
		  \tilde I_1=&\langle (\bm{Q}_{k-1}\bm{q}-\bm{q})\cdot\bm{n},Q_{k}\phi - \phi \rangle_{\partial\mathcal{T}_h\setminus \mathcal{\varepsilon}_h^{\Gamma}}
		 + \langle (\bm{Q}_{k-1}\bm{q}-\bm{q}),(Q_{k}\phi - \phi)\bm{n} \rangle_{*,\Gamma}\\
		  \leq&  \left(\lVert \alpha^{-1}(\bm{Q}_{k-1}\bm{q}-\bm{q})\rVert_{0,\partial\mathcal{T}_h\setminus \mathcal{\varepsilon}_h^{\Gamma}} +\lVert \alpha^{-1}(\bm{Q}_{k-1}\bm{q}-\bm{q})\rVert_{*,\Gamma}\right)\left(\lVert \alpha(\phi-Q_k\phi)\rVert_{0,\partial\mathcal{T}_h\setminus \mathcal{\varepsilon}_h^{\Gamma}} +\lVert \alpha(\phi-Q_k\phi)\rVert_{*,\Gamma}\right) \\
		  \apprle &h^{k+1}\lVert \alpha^{-1}\bm{q} \rVert_{k,\Omega_{1}\cup\Omega_{2}}  \lVert \alpha\phi\rVert_{2,\Omega_{1}\cup\Omega_{2}}.
		  \end{align*}
By the continuity of $\bm{q}\cdot\bm{n}$ on $F\in {\partial\mathcal{T}_h\setminus \mathcal{\varepsilon}_h^{\Gamma}}$ and the  orthogonal  property of projection $Q_{k-1}^b$, we easily get  
$$\tilde I_2 =\langle (\bm{Q}_{k-1}\bm{q}-\bm{q})\cdot\bm{n},\phi - Q_{k-1}^b\phi \rangle_{\partial\mathcal{T}_h\setminus \mathcal{\varepsilon}_h^{\Gamma}}= 0.$$
		  Similarly, if $g_N = 0$, then we have  the continuity of $\bm{q}\cdot\bm{n}$ on $F\in \varepsilon_h^*$, which implies
		  $$\tilde I_3=\langle (\bm{Q}_{k-1}\bm{q}-\bm{q}),(\phi - Q_{k-1}^b\phi)\bm{n} \rangle_{*,\Gamma} =0;$$
	Otherwise, it holds 
		  \begin{align*}
		  \tilde I_3 \leq&  \lVert \alpha^{-1}(\bm{Q}_{k-1}\bm{q}-\bm{q})\rVert_{*,\Gamma}\lVert \alpha(\phi-Q_{k-1}^b\phi)\rVert_{*,\Gamma}) \\
		  \apprle &h^{k}\lVert \alpha^{-1}\bm{q} \rVert_{k,\Omega_{1}\cup\Omega_{2}}  \lVert \alpha\phi\rVert_{2,\Omega_{1}\cup\Omega_{2}}.
		  \end{align*}
		  For the term $\tilde I_4$,  we similarly have 
		  \begin{align*}
		  \tilde I_4 \leq&  h^{k+1}\lVert u \rVert_{k+1,\Omega_{1}\cup\Omega_{2}}  \lVert \alpha\phi\rVert_{2,\Omega_{1}\cup\Omega_{2}}.
		  \end{align*}
		Combining the above estimates of $\tilde I_j$  ($j=1,2,3,4$) yields 
		$$I_2= \tilde I_1 +\tilde I_2 +\tilde I_3+\tilde I_4\apprle \left\{ \begin{array}{ll}
		h^{k+1}\lVert \alpha^{-1}\bm{q} \rVert_{k,\Omega_{1}\cup\Omega_{2}}  \lVert \alpha\phi\rVert_{2,\Omega_{1}\cup\Omega_{2}} & \text{ if } g_N=0,\\
		h^{k}\lVert \alpha^{-1}\bm{q} \rVert_{k,\Omega_{1}\cup\Omega_{2}}  \lVert \alpha\phi\rVert_{2,\Omega_{1}\cup\Omega_{2}} &\text{ otherwise.} 
		\end{array}
		\right.$$
		As a result, the estimates of $ I_i$  ($i=1,2,3,4$), together with the modified regularity assumption \eqref{regularestimate1}
and Theorem \ref{estimateofenergynorm}, yields the results \eqref{err_L2modified}.
		\end{proof}

\section{Numerical experiments}
In this section, we shall provide several numerical examples to verify the performance of the proposed   X-HDG schemes (\ref{X-HDGscheme}) and (\ref{X-HDGscheme1}). We recall that the modified scheme (\ref{X-HDGscheme1}) is only for the case that the interface $\Gamma$ is a fold line/plane. 

\begin{exmp}\label{circle} \textbf{Circular interface with homogeneous jump conditions} (cf. \cite{Huynh2013A}).  

Set   $\Omega = [0,1]\times [0,1]$ in the model problem \eqref{pb1} with a circular interface (Figure \ref{circledomain}). 
	The exact solution is given by
	\begin{align*}
	u(x,y) = \left \{
	\begin{array}{rl}
	\frac{r^5}{\alpha_2},  \quad \quad \quad \quad \quad \,{\rm if} \  r < r_0,\\
	\frac{r^5}{\alpha_1}-\frac{r_0^5}{\alpha_1}+\frac{r_0^5}{\alpha_2},  \,{\rm if} \   r > r_0 ,
	\end{array}
	\right.
	\end{align*}
	where $r = \sqrt{(x-1/2)^2+(y-1/2)^2}$ and  $r_0 = \sqrt{3}/8$.  It is easy to know that the homogeneous jump conditions  $g_D = g_N = 0$ hold. Different the coefficient $\alpha$, we consider four cases: $\alpha_1=10, \alpha_2=1; $  $\alpha_1=1, \alpha_2=10; $ $\alpha_1=1000, \alpha_2=1; $ $\alpha_1=1, \alpha_2=1000 $.  %The convergence results are listed in 
\end{exmp}

We use $N\times N$ uniform  triangular meshes  for the computation (cf. Figure \ref{circledomain}). Tables \ref{numericalresultcircle11} and  \ref{numericalresultcircle21} list    the  results of the relative errors between $(u,\bm{q} = \alpha \nabla u)$ and $(u_h,\bm{q}_h)$ at different coefficients, and  Figures \ref{1000:1}-\ref{1:1000} show 
 the numerical solutions $u_h$  at $128\times 128$ mesh with $k=1$ and  $\alpha_1:\alpha_2 = 1000:1,1:1000$.
 We can see that  for $k=1,2$  the X-HDG method yields optimal convergence orders, i.e. $k$-th order  rates of convergence for  the error $\lVert \nabla u-\nabla_h u_h \rVert_0, \lVert \bm{q}-\bm{q}_h \rVert_0$, and $(k+1)$-th order rates of convergence for $\lVert u-u_{h}\rVert_0$. This is consistent with our theoretical results in Theorems \ref{estimateofenergynorm}-\ref{L2}. 

\begin{figure}[htbp]\label{circledomain}	
	\centering	
	\includegraphics[height = 4.3 cm,width=5.5 cm]{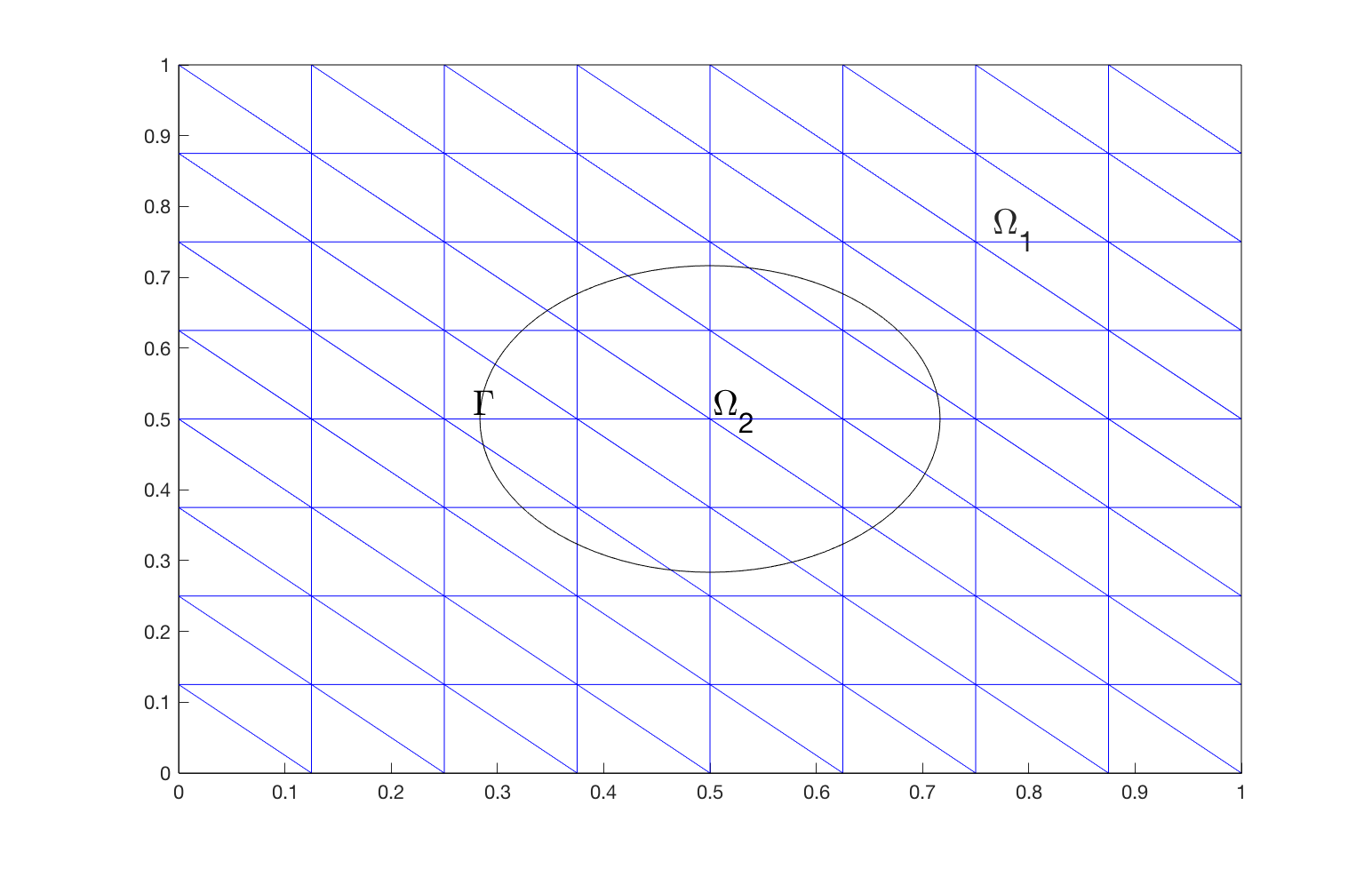} 				
	\caption{The domain with a circular interface: $8\times 8$ mesh}
\end{figure}

\begin{table}[H]
	\normalsize
	\caption{History of convergence for   Example \ref{circle}:   X-HDG scheme (\ref{X-HDGscheme})  with  $k = 1$}\label{numericalresultcircle11}
	\centering
	\footnotesize
	\subtable[ $\alpha_1:\alpha_2=10:1$]{
		\begin{tabular}{p{1.15cm}<{\centering}|p{1.2cm}<{\centering}|p{0.45cm}<{\centering}|p{1.2cm}<{\centering}|p{0.45cm}<{\centering}|p{1.2cm}<{\centering}|p{0.45cm}<{\centering}}
			\hline   
			\multirow{2}{*}{mesh}
			&\multicolumn{2}{c|}{$\frac{\lVert u-u_{h}\rVert_0}{\lVert u\rVert_0}$ }&\multicolumn{2}{c|}{$\frac{\lVert \bm{q}-\bm{q}_h \rVert_0}{\lVert \bm{q}\rVert_0}$}&\multicolumn{2}{c}{$\frac{\lVert \nabla u-\nabla_h u_h \rVert_0}{\lVert \nabla u\rVert_0}$}\cr\cline{2-7}  
			&error&order&error&order&error&order\cr  
			\cline{1-7}
			$8\times 8$	    &1.50E-01    &	--    &2.27E-01	  &  --   &2.93E-01	  &  --   \\
			\hline
			$16\times 16$	 &3.76E-02   &1.99	&1.16E-01	  &0.97  &1.37E-01	  &1.10	 \\
			\hline
			$32\times 32$	&9.44E-03   &2.00	   &5.84E-02	  &0.99	  &6.74E-02	  &1.03  \\
			\hline
			$64\times 64$	&2.36E-03    &2.00	&2.93E-02	  &1.00		 &3.36E-02	  &1.01 \\
			\hline
			$128\times 128$	&5.90E-04    &2.00	&1.46E-02	  &1.00		&1.68E-02	  &1.00  \\
			\hline
		\end{tabular}
	}
\subtable[ $\alpha_1:\alpha_2=1:10$ ]{
	\begin{tabular}{p{1.15cm}<{\centering}|p{1.2cm}<{\centering}|p{0.45cm}<{\centering}|p{1.2cm}<{\centering}|p{0.45cm}<{\centering}|p{1.2cm}<{\centering}|p{0.45cm}<{\centering}}
		\hline   
		\multirow{2}{*}{mesh}
		&\multicolumn{2}{c|}{$\frac{\lVert u-u_{h}\rVert_0}{\lVert u\rVert_0}$ }&\multicolumn{2}{c|}{$\frac{\lVert \bm{q}-\bm{q}_h \rVert_0}{\lVert \bm{q}\rVert_0}$}&\multicolumn{2}{c}{$\frac{\lVert \nabla u-\nabla_h u_h \rVert_0}{\lVert \nabla u\rVert_0}$}\cr\cline{2-7}  
		&error&order&error&order&error&order\cr  
		\cline{1-7}
		$8\times 8$	    &1.63E-01    &	--    &2.25E-01    &--   &2.87E-01	  &   --   \\
		\hline
		$16\times 16$	 &4.09E-02   &2.00	&1.15E-01  &0.97	 &1.35E-01  &1.08\\
		\hline
		$32\times 32$	&1.02E-02    &2.00	     &5.76E-02	   &0.99    &6.65E-02	  &1.02  \\
		\hline
		$64\times 64$	&2.56E-03    &2.00	&2.88E-02	  &1.00		 &3.31E-02	  &1.01  \\
		\hline
		$128\times 128$	&6.39E-04    &2.00	&1.44E-02	  &1.00  &1.65E-02	  &1.00 \\
		\hline
	\end{tabular}
}
	\subtable[ $\alpha_1:\alpha_2=1000:1$]{
		\begin{tabular}{p{1.15cm}<{\centering}|p{1.2cm}<{\centering}|p{0.45cm}<{\centering}|p{1.2cm}<{\centering}|p{0.45cm}<{\centering}|p{1.2cm}<{\centering}|p{0.45cm}<{\centering}}
			\hline   
			\multirow{2}{*}{mesh}
			&\multicolumn{2}{c|}{$\frac{\lVert u-u_{h}\rVert_0}{\lVert u\rVert_0}$ }&\multicolumn{2}{c|}{$\frac{\lVert \bm{q}-\bm{q}_h \rVert_0}{\lVert \bm{q}\rVert_0}$}&\multicolumn{2}{c}{$\frac{\lVert \nabla u-\nabla_h u_h \rVert_0}{\lVert \nabla u\rVert_0}$}\cr\cline{2-7}  
			&error&order&error&order&error&order\cr  
			\cline{1-7}
			$8\times 8$	    &1.99E-01    &	--    &4.45E-01	  &  --  	 &8.79E-01  & --  \\
			\hline
			$16\times 16$	 &5.27E-02   &1.93	&2.70E-01	  &0.72	  &3.60E-01	  &1.29	 \\
			\hline
			$32\times 32$	&1.42E-02   &1.90	   &1.51E-01	  &0.84	  &1.70E-01	  &1.08    \\
			\hline
			$64\times 64$	&3.57E-03    &1.99	&7.92E-02	  &0.93   &8.01E-02	  &1.09 \\
			\hline
			$128\times 128$	&8.98E-04    &1.99	&4.06E-02	  &0.97	  	&4.01E-02	  &1.00  \\
			\hline
		\end{tabular}
	}
	\subtable[$\alpha_1:\alpha_2=1:1000$]{
\begin{tabular}{p{1.15cm}<{\centering}|p{1.2cm}<{\centering}|p{0.45cm}<{\centering}|p{1.2cm}<{\centering}|p{0.45cm}<{\centering}|p{1.2cm}<{\centering}|p{0.45cm}<{\centering}}
	\hline   
	\multirow{2}{*}{mesh}
	&\multicolumn{2}{c|}{$\frac{\lVert u-u_{h}\rVert_0}{\lVert u\rVert_0}$ }&\multicolumn{2}{c|}{$\frac{\lVert \bm{q}-\bm{q}_h \rVert_0}{\lVert \bm{q}\rVert_0}$}&\multicolumn{2}{c}{$\frac{\lVert \nabla u-\nabla_h u_h \rVert_0}{\lVert \nabla u\rVert_0}$}\cr\cline{2-7}  
	&error&order&error&order&error&order\cr  
	\cline{1-7}
		$8\times 8$	    &1.63E-01    &	--    &2.25E-01	  &   --     &2.87E-01	  &   --   \\
		\hline
		$16\times 16$	 &4.09E-02   &2.00	&1.15E-01	  &0.97	   &1.35E-01  &1.08\\
		\hline
		$32\times 32$	&1.02E-02    &2.00	&5.76E-02	  &0.99  &6.65E-02	  &1.02  \\
		\hline
		$64\times 64$	&2.56E-03    &2.00	&2.88E-02	  &1.00	 &3.31E-02	  &1.01  \\
		\hline
		$128\times 128$	&6.40E-04    &2.00	&1.44E-02	  &1.00	 &1.65E-02	  &1.00 \\
		\hline
	\end{tabular}
}
\end{table}

\begin{table}[H]
	\normalsize
	\caption{History of convergence   for Example \ref{circle}:   X-HDG scheme (\ref{X-HDGscheme})  with  $k = 2$ }\label{numericalresultcircle21}
	\centering
	\footnotesize
	\subtable[ $\alpha_1:\alpha_2=10:1$]{
		\begin{tabular}{p{1.15cm}<{\centering}|p{1.2cm}<{\centering}|p{0.45cm}<{\centering}|p{1.2cm}<{\centering}|p{0.45cm}<{\centering}|p{1.2cm}<{\centering}|p{0.45cm}<{\centering}}
			\hline   
			\multirow{2}{*}{mesh}
			&\multicolumn{2}{c|}{$\frac{\lVert u-u_{h}\rVert_0}{\lVert u\rVert_0}$ }&\multicolumn{2}{c|}{$\frac{\lVert \bm{q}-\bm{q}_h \rVert_0}{\lVert \bm{q}\rVert_0}$}&\multicolumn{2}{c}{$\frac{\lVert \nabla u-\nabla_h u_h \rVert_0}{\lVert \nabla u\rVert_0}$}\cr\cline{2-7}  
			&error&order&error&order&error&order\cr  
			\cline{1-7}
			$8\times 8$	    &1.41E-02    &	--    &2.34E-02	  &  --   &8.92E-02	  &  --   \\
			\hline
			$16\times 16$	 &1.84E-03   &2.94	&6.20E-03	  &1.92  &2.28E-02	  &1.97	 \\
			\hline
			$32\times 32$	&2.35E-04   &2.97	 &1.59E-03	  &1.96	  &5.78E-03	  &1.98   \\
			\hline
			$64\times 64$	&2.96E-05    &2.99	&4.04E-04	  &1.98	   &1.46E-03	&1.99  \\
			\hline
			$128\times 128$	&3.72E-06    &2.99	&1.02E-04	  &1.99  	&3.66E-04	  &1.99  \\
			\hline
		\end{tabular}
	}
	\subtable[$\alpha_1:\alpha_2=1:10$]{
\begin{tabular}{p{1.15cm}<{\centering}|p{1.2cm}<{\centering}|p{0.45cm}<{\centering}|p{1.2cm}<{\centering}|p{0.45cm}<{\centering}|p{1.2cm}<{\centering}|p{0.45cm}<{\centering}}
	\hline   
	\multirow{2}{*}{mesh}
	&\multicolumn{2}{c|}{$\frac{\lVert u-u_{h}\rVert_0}{\lVert u\rVert_0}$ }&\multicolumn{2}{c|}{$\frac{\lVert \bm{q}-\bm{q}_h \rVert_0}{\lVert \bm{q}\rVert_0}$}&\multicolumn{2}{c}{$\frac{\lVert \nabla u-\nabla_h u_h \rVert_0}{\lVert \nabla u\rVert_0}$}\cr\cline{2-7}  
	&error&order&error&order&error&order\cr  
	\cline{1-7}
		$8\times 8$	    &1.46E-02    &	--    &2.26E-02	  &  --   &8.30E-02	  &   --   \\
		\hline
		$16\times 16$	 &1.84E-03   &2.99	&5.76E-03	  &1.97		 &2.08E-02  &2.00\\
		\hline
		$32\times 32$	&2.30E-04   &3.00	&1.45E-03	  &1.99    &5.21E-03	  &2.00  \\
		\hline
		$64\times 64$	&2.88E-05    &3.00	&3.63E-04	  &2.00	 &1.30E-03	  &2.00  \\
		\hline
		$128\times 128$	&3.60E-06    &3.00	&9.08E-05	  &2.00  &3.26E-04	  &2.00 \\
		\hline
	\end{tabular}
}
\subtable[ $\alpha_1:\alpha_2=1000:1$ ]{
	\begin{tabular}{p{1.15cm}<{\centering}|p{1.2cm}<{\centering}|p{0.45cm}<{\centering}|p{1.2cm}<{\centering}|p{0.45cm}<{\centering}|p{1.2cm}<{\centering}|p{0.45cm}<{\centering}}
		\hline   
		\multirow{2}{*}{mesh}
		&\multicolumn{2}{c|}{$\frac{\lVert u-u_{h}\rVert_0}{\lVert u\rVert_0}$ }&\multicolumn{2}{c|}{$\frac{\lVert \bm{q}-\bm{q}_h \rVert_0}{\lVert \bm{q}\rVert_0}$}&\multicolumn{2}{c}{$\frac{\lVert \nabla u-\nabla_h u_h \rVert_0}{\lVert \nabla u\rVert_0}$}\cr\cline{2-7}  
		&error&order&error&order&error&order\cr  
		\cline{1-7}
		$8\times 8$	    &2.84E-02    &	--    &9.13E-02	    &  --  	 &5.00E-01  & --  \\
		\hline
		$16\times 16$	 &5.52E-03   &2.36	&3.49E-02	  &1.39	  &1.41E-01	  &1.83 \\
		\hline
		$32\times 32$	&7.95E-04   &2.80	 &1.01E-02	  &1.79	  &3.83E-02	  &1.88    \\
		\hline
		$64\times 64$	&1.03E-04    &2.95	&2.69E-03	  &1.91   &9.89E-03	  &1.95  \\
		\hline
		$128\times 128$	&1.31E-05    &2.97	&6.97E-04	  &1.95	  	&2.53E-03	  &1.97   \\
		\hline
%		$256\times 256$	&1.66E-06    &2.98	&1.77E-04	  &1.97	  	&6.41E-04	  &1.98   \\
%		\hline
	\end{tabular}
}
	\subtable[  $\alpha_1:\alpha_2=1:1000$]{
		\begin{tabular}{p{1.15cm}<{\centering}|p{1.2cm}<{\centering}|p{0.45cm}<{\centering}|p{1.2cm}<{\centering}|p{0.45cm}<{\centering}|p{1.2cm}<{\centering}|p{0.45cm}<{\centering}}
			\hline   
			\multirow{2}{*}{mesh}
			&\multicolumn{2}{c|}{$\frac{\lVert u-u_{h}\rVert_0}{\lVert u\rVert_0}$ }&\multicolumn{2}{c|}{$\frac{\lVert \bm{q}-\bm{q}_h \rVert_0}{\lVert \bm{q}\rVert_0}$}&\multicolumn{2}{c}{$\frac{\lVert \nabla u-\nabla_h u_h \rVert_0}{\lVert \nabla u\rVert_0}$}\cr\cline{2-7}  
			&error&order&error&order&error&order\cr  
			\cline{1-7}
			$8\times 8$	    &1.46E-02    &	--    &2.26E-02	  &  --     &8.30E-02	  &   --   \\
			\hline
			$16\times 16$	 &1.84E-03   &2.99	&5.76E-03	  &1.97	   &2.08E-02     &2.00\\
			\hline
			$32\times 32$	&2.30E-04   &3.00	   &1.45E-03	  &1.99  &5.21E-03	  &2.00  \\
			\hline
			$64\times 64$	&2.88E-05    &3.00	&3.63E-04	  &2.00		 &1.30E-03	  &2.00  \\
			\hline
			$128\times 128$	&3.60E-06    &3.00	&9.08E-05	  &2.00	    &3.26E-04	  &2.00 \\
			\hline
%			$256\times 256$	&4.50E-07    &3.00	&2.27E-05	  &2.00	     &8.16E-05	  &2.00 \\
%			\hline
		\end{tabular}
	}
\end{table}

\begin{figure}[htbp]	
	\centering	
	\includegraphics[height = 5 cm,width=6 cm]{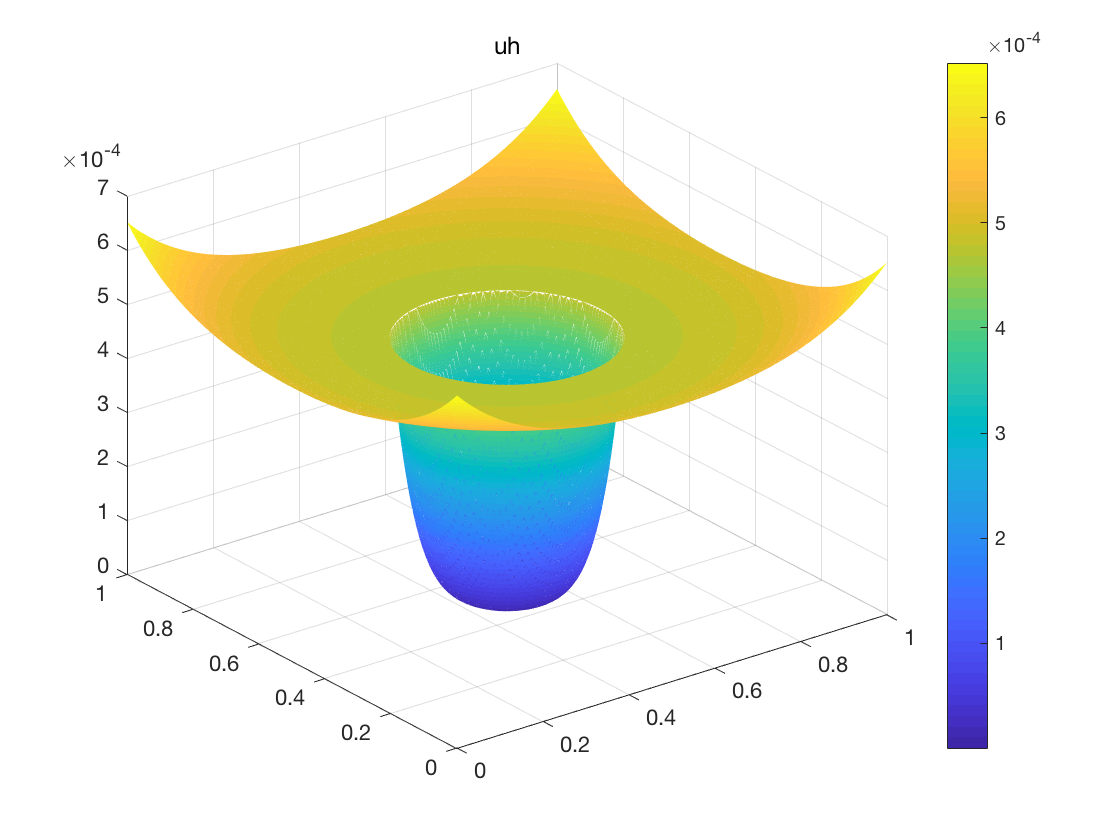} 			
	\includegraphics[height = 5 cm,width=6 cm]{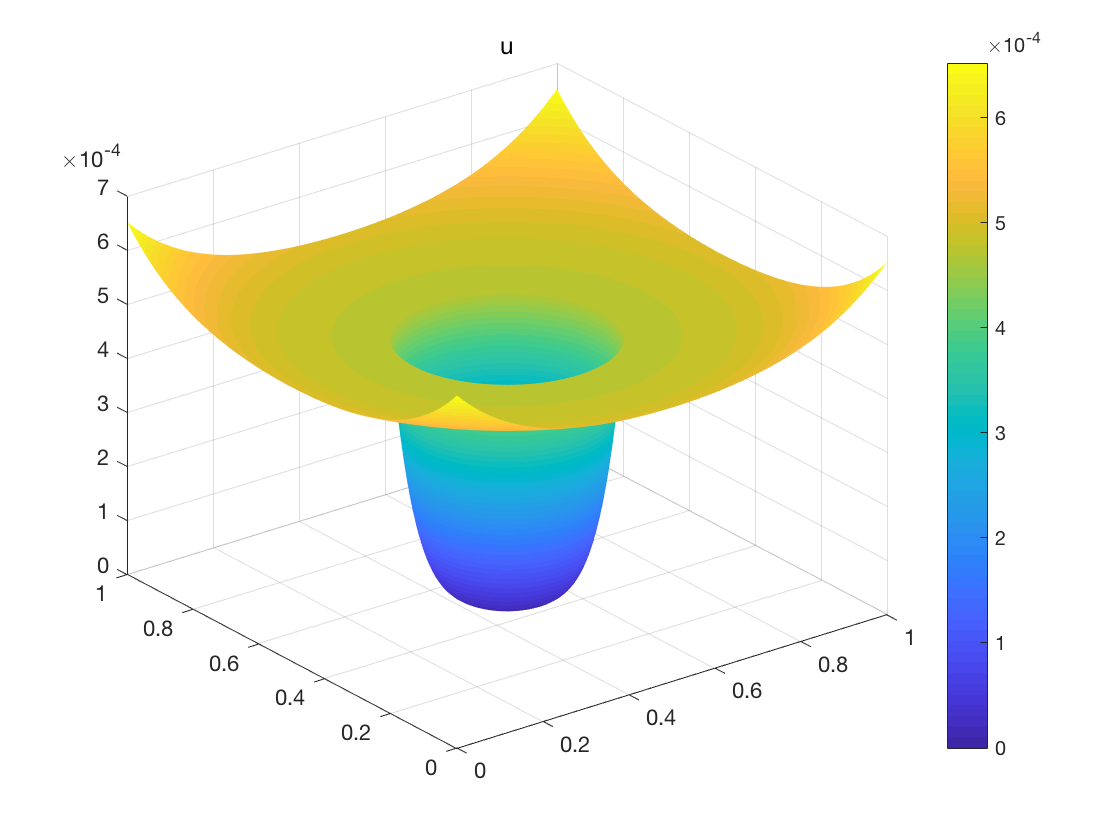} 	
	\tiny\caption{The X-HDG solution (left) and the exact solution (right) for the case of $\alpha_1:\alpha_2=1000:1$ with $k=1$ for Example \ref{circle}.}\label{1000:1}
\end{figure}

\begin{figure}[htbp]	
	\centering	
	\includegraphics[height = 5 cm,width=6 cm]{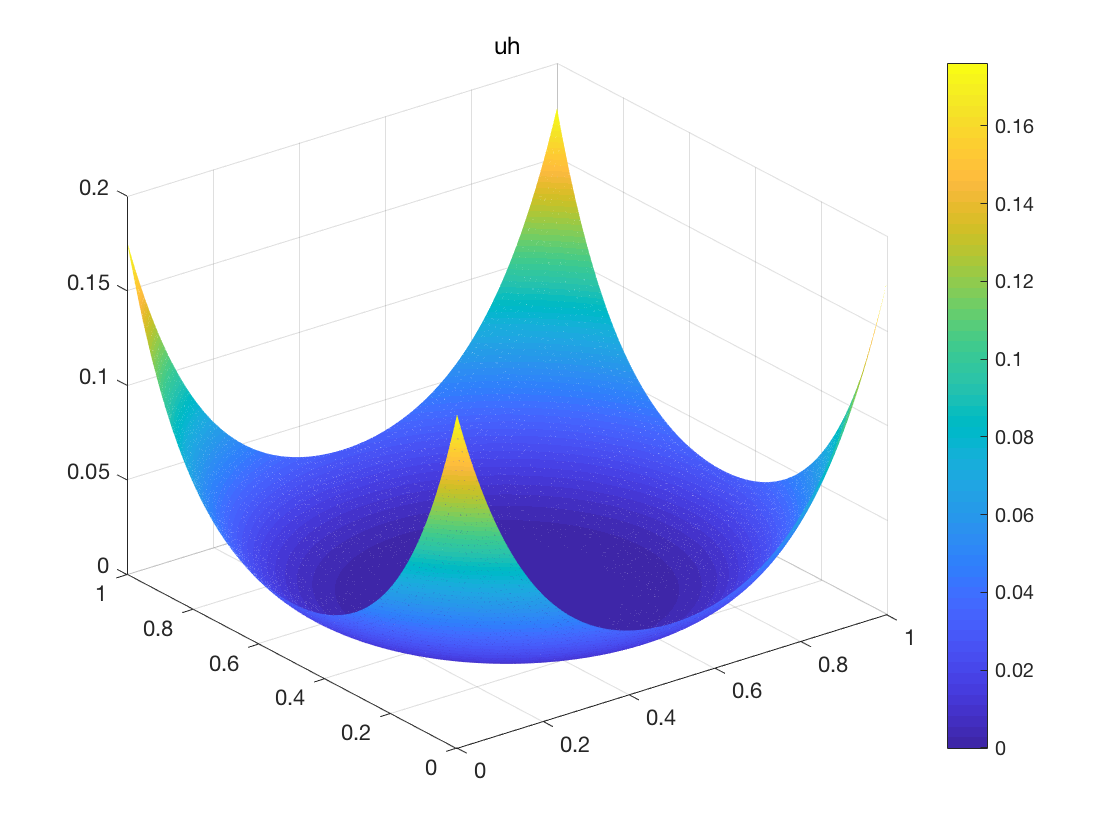} 			
	\includegraphics[height = 5 cm,width=6 cm]{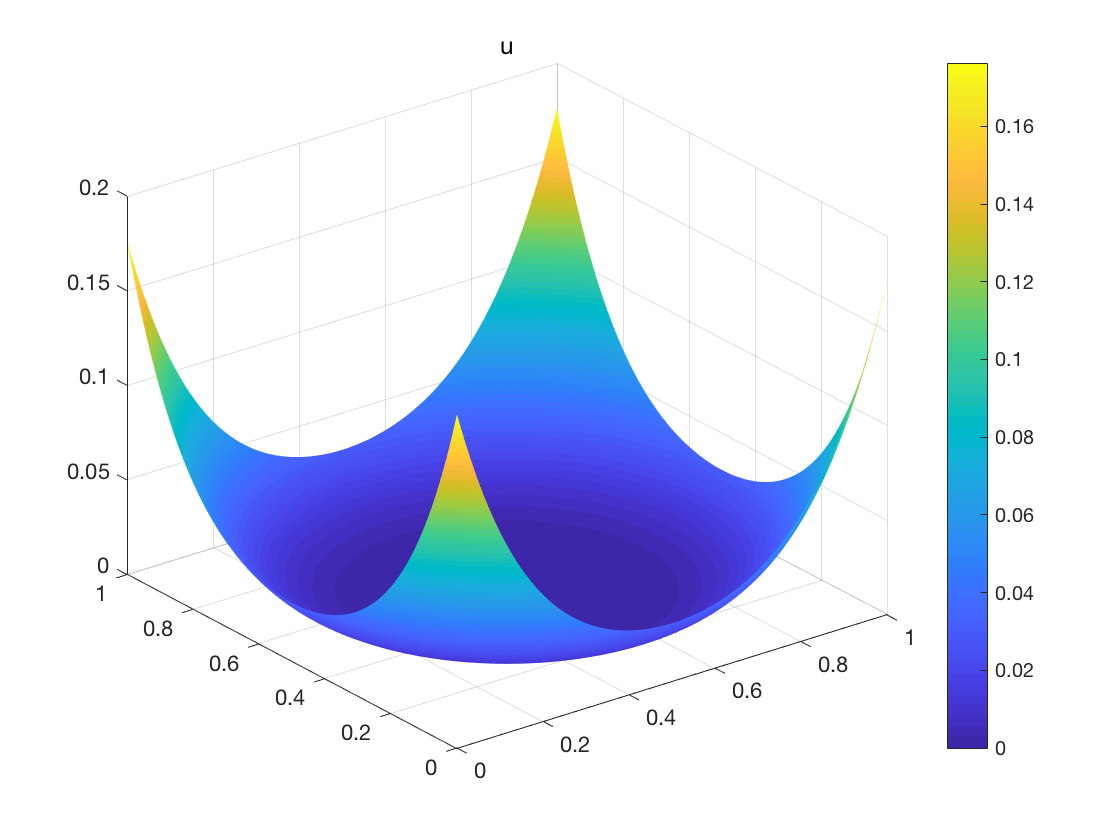} 	
	\tiny\caption{The X-HDG solution (left) and the exact solution (right) for the case of $\alpha_1:\alpha_2=1:1000$ with $k=1$ for Example \ref{circle}.}\label{1:1000}
\end{figure}

\begin{exmp}\label{cicularjump} \textbf{Circular interface with nonhomogeneous jump conditions}.
	
	Set  $\Omega = [0,1]\times [0,1]$ with a circular interface (cf. Figure \ref{circledomain}). The exact solution to  \eqref{pb1}  is given by
	\begin{align*}
	u(x,y) = \left \{
	\begin{array}{rl}
	e^{x}cosy,  \quad \quad \quad \,{\rm if} \  r < r_0,\\
	sin(\pi x)sin(\pi y),  \,{\rm if} \   r > r_0 ,
	\end{array}
	\right.
	\end{align*}
	where $r = \sqrt{(x-1/2)^2+(y-1/2)^2}$, $r_0 = \sqrt{3}/8$. Take $\alpha_1=1000, \alpha_2=1$.  The jump conditions $g_D$ and $g_N$, which  are not zero in this case, can be derived from the analytic solution.
\end{exmp}

\begin{table}[H]
	\normalsize
	\caption{History of convergence for Example \ref{cicularjump}:  X-HDG scheme (\ref{X-HDGscheme}) with $\alpha_1=1000,\alpha_2=1$. }\label{numericaljumpresultline11}
	\centering
	\footnotesize
		\subtable[ $k =1$]{
		\begin{tabular}{p{1.15cm}<{\centering}|p{1.2cm}<{\centering}|p{0.45cm}<{\centering}|p{1.2cm}<{\centering}|p{0.45cm}<{\centering}|p{1.2cm}<{\centering}|p{0.45cm}<{\centering}}
		\hline   
		\multirow{2}{*}{mesh}
		&\multicolumn{2}{c|}{$\frac{\lVert u-u_{h}\rVert_0}{\lVert u\rVert_0}$ }&\multicolumn{2}{c|}{$\frac{\lVert \bm{q}-\bm{q}_h \rVert_0}{\lVert \bm{q}\rVert_0}$}&\multicolumn{2}{c}{$\frac{\lVert \nabla u-\nabla_h u_h \rVert_0}{\lVert \nabla u\rVert_0}$}\cr\cline{2-7}  
		&error&order&error&order&error&order\cr  
		\cline{1-7}
			$8\times 8$	    &2.65E-02    &--       &1.23E-01	  &    --     &1.36E-01	  &  --    \\
			\hline
			$16\times 16$	 &6.77E-03   &1.97	&6.15E-02	  &0.99		  &6.16E-02	  &1.15	\\
			\hline
			$32\times 32$	&1.69E-03    &2.00	&3.09E-02	  &0.99  &3.04E-02	  &1.02  \\
			\hline
			$64\times 64$	&4.25E-04    &2.00	&1.55E-02	  &1.00 &1.41E-02	&1.11  \\
			\hline
			$128\times 128$	&1.06E-04    &2.00	&7.74E-03  &1.00  	&6.90E-03	  &1.03  \\
			\hline
		\end{tabular}
	}
		\subtable[ $k =2$]{
		\begin{tabular}{p{1.15cm}<{\centering}|p{1.2cm}<{\centering}|p{0.45cm}<{\centering}|p{1.2cm}<{\centering}|p{0.45cm}<{\centering}|p{1.2cm}<{\centering}|p{0.45cm}<{\centering}}
			\hline   
			\multirow{2}{*}{mesh}
			&\multicolumn{2}{c|}{$\frac{\lVert u-u_{h}\rVert_0}{\lVert u\rVert_0}$ }&\multicolumn{2}{c|}{$\frac{\lVert \bm{q}-\bm{q}_h \rVert_0}{\lVert \bm{q}\rVert_0}$}&\multicolumn{2}{c}{$\frac{\lVert \nabla u-\nabla_h u_h \rVert_0}{\lVert \nabla u\rVert_0}$}\cr\cline{2-7}  
			&error&order&error&order&error&order\cr  
			\cline{1-7}
			$8\times 8$	    &1.80E-03    &	   --  &1.01E-02	  &    --		 &3.10E-02  &--  \\
			\hline
			$16\times 16$	 &2.38E-04   &2.92	&2.52E-03	  &2.00	   &7.83E-03	  &1.99\\
			\hline
			$32\times 32$	&3.10E-05    &2.94	&6.31E-04	  &2.00	 &1.97E-03	  &1.99  \\
			\hline
			$64\times 64$	&3.89E-06    &2.99	&1.58E-04	  &2.00  &4.92E-04	  &2.00  \\
			\hline
			$128\times 128$	&4.87E-07    &3.00	&3.95E-05    &2.00    &1.23E-04	  &2.00 \\
			\hline
		\end{tabular}
	}
\end{table}

\begin{figure}[htbp]	
	\centering	
	\includegraphics[height = 5.5 cm,width=6.5 cm]{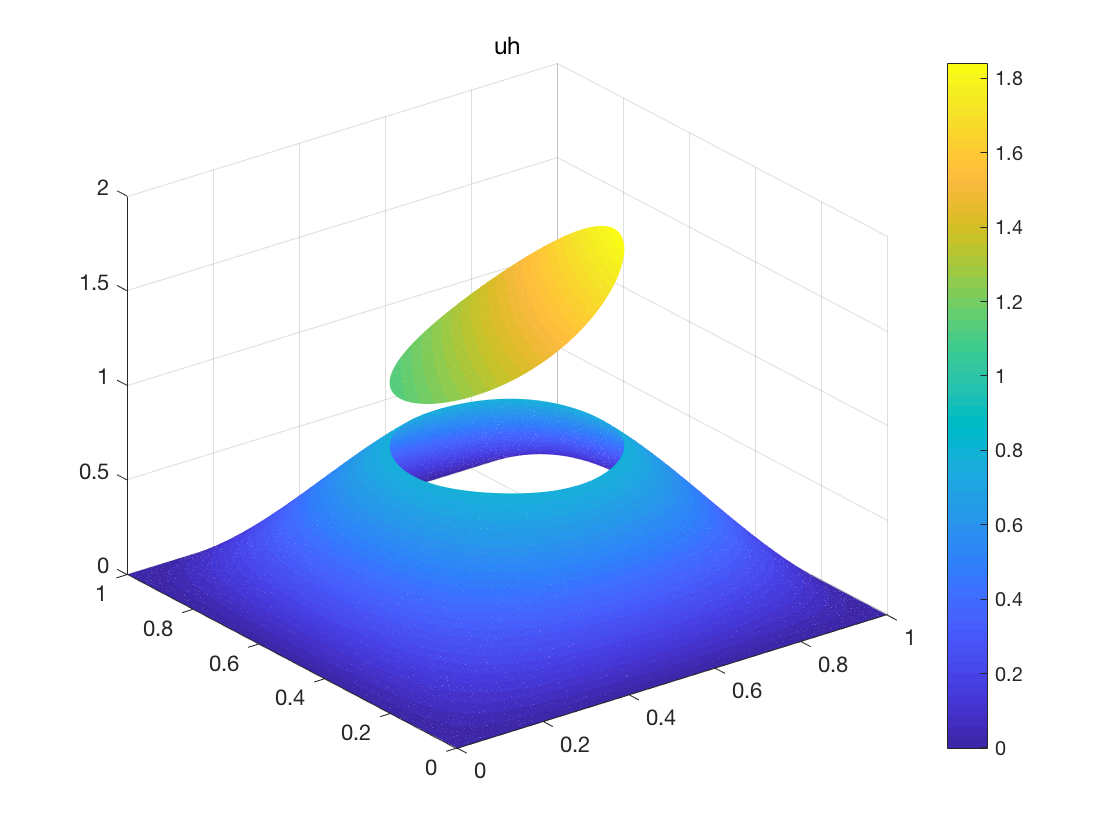} 			
	\includegraphics[height = 5.5 cm,width=6.5 cm]{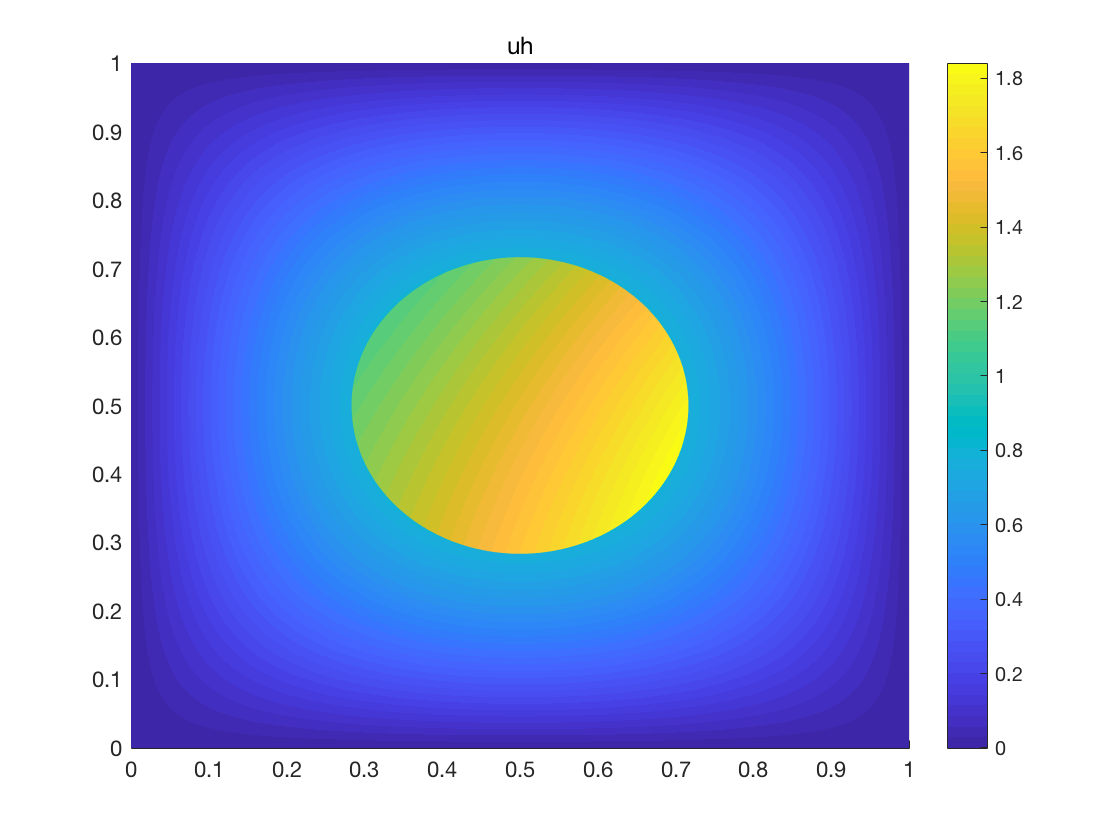} 			
	\tiny\caption{The X-HDG solution with $k=1$ for Example \ref{cicularjump}.}\label{circularjumpfig}
\end{figure}

Table \ref{numericaljumpresultline11} gives the numerical results obtained by the X-HDG scheme (\ref{X-HDGscheme}) with $k=1,k=2$, and  Fig \ref{circularjumpfig} shows he numerical solution with $k=1$  at $128\times 128$ mesh. We can see that the proposed method yields optimal convergence rates, i.e. $k$-th order  rates of convergence for  the error $\lVert \nabla u-\nabla_h u_h \rVert_0, \lVert \bm{q}-\bm{q}_h \rVert_0$, and $(k+1)$-th order rates of convergence for $\lVert u-u_{h}\rVert_0$.  In particular, the convergence rate of  $\lVert u-u_{h}\rVert_0$ is better than the  theoretical result in Theorem \ref{L2}, though  in this case $g_D\neq 0$ and the interface is not fold line.

\begin{exmp}\label{linejump} \textbf{Straight segment interface}.

	Set $\Omega = [0,1]\times [0,1]$ with a straight segment interface (Figure \ref{segment-domain}).   The exact solution  to  \eqref{pb1} is given by
	\begin{align*}
	u(x,y) = \left \{
	\begin{array}{rl}
	&5y^4+1, \,\  {\rm if} \,  y > b_0:= 0.2031,\\
	&y^4+4b_0^4, \ {\rm if} \, y < b_0.
	\end{array}
	\right.
	\end{align*}
	%where $b_0 = 0.2031$. 
	We consider two cases of  the coefficient $\alpha$: $\alpha_1=1000, \alpha_2 =1$ and $\alpha_1=1, \alpha_2 =1000$. 
\end{exmp}

Tables \ref{numericalresultline2}-\ref{numericalresultline1} list   the  numerical results obtained by the X-HDG scheme (\ref{X-HDGscheme}) and the modified X-HDG scheme  (\ref{X-HDGscheme1}) with $k=1,2$, and  Figure \ref{1:10} show 
 the numerical solution $u_h$  at $128\times 128$ mesh with $k=1$ and  $\alpha_1:\alpha_2 = 1000:1$. We can see that both of the methods yield optimal convergence rates. This is conformable to the theoretical results in Theorems \ref{estimateofenergynorm},\ref{L2}, \ref{energy-modified} and \ref{L2-modified}.

\begin{figure}[htbp]\label{segment-domain}	
	\centering	
	\includegraphics[height = 4.5 cm,width=5.5 cm]{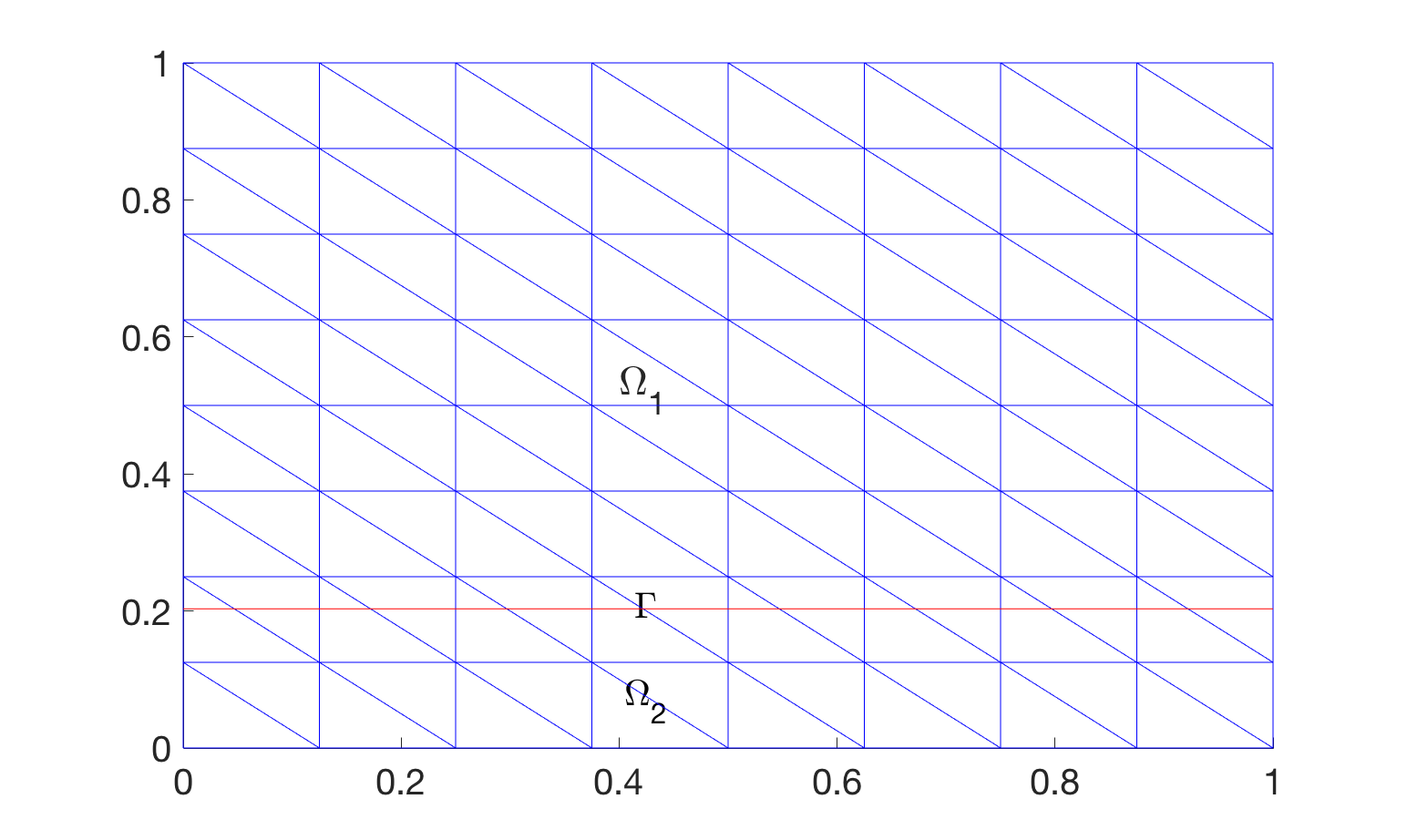} 				
	\caption{The domain with a straight segment interface: $8\times 8$ mesh}
\end{figure}

\begin{table}[H]
	\normalsize
	\caption{History of convergence  for Example \ref{linejump}: X-HDG scheme (\ref{X-HDGscheme}) with   $k=1$ }\label{numericalresultline2}
	\centering
	\footnotesize
		\subtable[ $\alpha_1:\alpha_2=1000:1$]
	{
			\begin{tabular}{p{1.15cm}<{\centering}|p{1.2cm}<{\centering}|p{0.45cm}<{\centering}|p{1.2cm}<{\centering}|p{0.45cm}<{\centering}|p{1.2cm}<{\centering}|p{0.45cm}<{\centering}}
			\hline   
			\multirow{2}{*}{mesh}
			&\multicolumn{2}{c|}{$\frac{\lVert u-u_{h}\rVert_0}{\lVert u\rVert_0}$ }&\multicolumn{2}{c|}{$\frac{\lVert \bm{q}-\bm{q}_h \rVert_0}{\lVert \bm{q}\rVert_0}$}&\multicolumn{2}{c}{$\frac{\lVert \nabla u-\nabla_h u_h \rVert_0}{\lVert \nabla u\rVert_0}$}\cr\cline{2-7}  
			&error&order&error&order&error&order\cr  
			\cline{1-7}
			$8\times 8$	    &2.60E-02    &	   --  &1.24E-01	  &   --   &1.23E-01	  &  --  \\
			\hline
			$16\times 16$	 &6.53E-03   &1.99	&6.27E-02	   &0.99  &6.01E-02	  &1.03	 \\
			\hline
			$32\times 32$	&1.63E-03    &2.00	&3.15E-02	  &1.00		  &2.98E-02	  &1.01  \\
			\hline
			$64\times 64$	&4.09E-04    &2.00	&1.57E-02	  &1.00   &1.49E-02	&1.00  \\
			\hline
			$128\times 128$	&1.02E-04    &2.00	&7.87E-03  &1.00  	&7.44E-03	  &1.00   \\
			\hline
		\end{tabular}
	}
		\subtable[$\alpha_1:\alpha_2=1:1000$]
	{
			\begin{tabular}{p{1.15cm}<{\centering}|p{1.2cm}<{\centering}|p{0.45cm}<{\centering}|p{1.2cm}<{\centering}|p{0.45cm}<{\centering}|p{1.2cm}<{\centering}|p{0.45cm}<{\centering}}
			\hline   
			\multirow{2}{*}{mesh}
			&\multicolumn{2}{c|}{$\frac{\lVert u-u_{h}\rVert_0}{\lVert u\rVert_0}$ }&\multicolumn{2}{c|}{$\frac{\lVert \bm{q}-\bm{q}_h \rVert_0}{\lVert \bm{q}\rVert_0}$}&\multicolumn{2}{c}{$\frac{\lVert \nabla u-\nabla_h u_h \rVert_0}{\lVert \nabla u\rVert_0}$}\cr\cline{2-7}  
			&error&order&error&order&error&order\cr  
			\cline{1-7}
			$8\times 8$	    &2.65E-02    &	   --  &1.24E-01	  &    -- &1.23E-01	  &   --   \\
			\hline
			$16\times 16$	 &6.65E-03   &1.99	&6.27E-02	  &0.99	 &6.00E-02  &1.03\\
			\hline
			$32\times 32$	&1.66E-03    &2.00	&3.14E-02	  &1.00    &2.98E-02	  &1.01  \\
			\hline
			$64\times 64$	&4.16E-04    &2.00	&1.57E-02	  &1.00		 &1.49E-02	  &1.00  \\
			\hline
			$128\times 128$	&1.04E-04    &2.00	&7.87E-03  &1.00 &7.44E-03	  &1.00 \\
			\hline
		\end{tabular}
	}
\end{table}

\begin{table}[H]
	\normalsize
	\caption{History of convergence  for Example \ref{linejump}: X-HDG scheme (\ref{X-HDGscheme}) with   $k=2$ }\label{numericalresultline12}
	\centering
	\footnotesize
		\subtable[ $\alpha_1:\alpha_2=1000:1$]{
	\begin{tabular}{p{1.15cm}<{\centering}|p{1.2cm}<{\centering}|p{0.45cm}<{\centering}|p{1.2cm}<{\centering}|p{0.45cm}<{\centering}|p{1.2cm}<{\centering}|p{0.45cm}<{\centering}}
		\hline   
		\multirow{2}{*}{mesh}
		&\multicolumn{2}{c|}{$\frac{\lVert u-u_{h}\rVert_0}{\lVert u\rVert_0}$ }&\multicolumn{2}{c|}{$\frac{\lVert \bm{q}-\bm{q}_h \rVert_0}{\lVert \bm{q}\rVert_0}$}&\multicolumn{2}{c}{$\frac{\lVert \nabla u-\nabla_h u_h \rVert_0}{\lVert \nabla u\rVert_0}$}\cr\cline{2-7}  
		&error&order&error&order&error&order\cr  
		\cline{1-7}
			$8\times 8$	    &1.01E-03    &	 --    &4.77E-03	  &  --    &1.47E-02	  &  --   \\
			\hline
			$16\times 16$	 &1.27E-04   &2.99	&1.21E-03	  &1.98	  &3.68E-03	  &2.00	 \\
			\hline
			$32\times 32$	&1.59E-05    &3.00	&3.02E-04	  &2.00  &9.20E-04	  &2.00   \\
			\hline
			$64\times 64$	&1.99E-06    &3.00	&7.58E-05	  &2.00  &2.30E-04	&2.00  \\
			\hline
			$128\times 128$	&2.49E-07    &3.00	&1.90E-05  &2.00  	&5.75E-05	  &2.00    \\
			\hline
		\end{tabular}
	}
\subtable[$\alpha_1:\alpha_2=1:1000$]{
	\begin{tabular}{p{1.15cm}<{\centering}|p{1.2cm}<{\centering}|p{0.45cm}<{\centering}|p{1.2cm}<{\centering}|p{0.45cm}<{\centering}|p{1.2cm}<{\centering}|p{0.45cm}<{\centering}}
		\hline   
		\multirow{2}{*}{mesh}
		&\multicolumn{2}{c|}{$\frac{\lVert u-u_{h}\rVert_0}{\lVert u\rVert_0}$ }&\multicolumn{2}{c|}{$\frac{\lVert \bm{q}-\bm{q}_h \rVert_0}{\lVert \bm{q}\rVert_0}$}&\multicolumn{2}{c}{$\frac{\lVert \nabla u-\nabla_h u_h \rVert_0}{\lVert \nabla u\rVert_0}$}\cr\cline{2-7}  
		&error&order&error&order&error&order\cr  
		\cline{1-7}
		$8\times 8$	    &1.01E-03    &	 --    &4.77E-03	  &  --  &1.47E-02	  &   --   \\
		\hline
		$16\times 16$	 &1.27E-04   &2.99	&1.21E-03	  &1.98		 &3.68E-03  &2.00\\
		\hline
		$32\times 32$	&1.59E-05    &3.00	&3.02E-04	  &2.00   &9.20E-04	  &2.00  \\
		\hline
		$64\times 64$	&1.99E-06    &3.00	&7.58E-05	  &2.00	 &2.30E-04	  &2.00  \\
		\hline
		$128\times 128$	&2.49E-07    &3.00	&1.90E-05  &2.00 &5.75E-05	  &2.00 \\
		\hline
	\end{tabular}
}
\end{table}

\begin{table}[H]
	\normalsize
	\caption{History of convergence  for Example \ref{linejump}: modified X-HDG scheme (\ref{X-HDGscheme1}) with   $k=1$ }\label{numericalresultline}
	\centering
	\footnotesize
		\subtable[ $\alpha_1:\alpha_2=1000:1$]{
		\begin{tabular}{p{1.15cm}<{\centering}|p{1.2cm}<{\centering}|p{0.45cm}<{\centering}|p{1.2cm}<{\centering}|p{0.45cm}<{\centering}|p{1.2cm}<{\centering}|p{0.45cm}<{\centering}}
			\hline   
			\multirow{2}{*}{mesh}
			&\multicolumn{2}{c|}{$\frac{\lVert u-u_{h}\rVert_0}{\lVert u\rVert_0}$ }&\multicolumn{2}{c|}{$\frac{\lVert \bm{q}-\bm{q}_h \rVert_0}{\lVert \bm{q}\rVert_0}$}&\multicolumn{2}{c}{$\frac{\lVert \nabla u-\nabla_h u_h \rVert_0}{\lVert \nabla u\rVert_0}$}\cr\cline{2-7}  
			&error&order&error&order&error&order\cr  
			\cline{1-7}
			$8\times 8$	    &3.10E-02    &	   --  &1.44E-01	  &   --   &1.59E-01	  &  --    \\
			\hline
			$16\times 16$	 &7.89E-03   &1.98	&7.33E-02	   &0.97  &7.98E-02	  &1.00	 \\
			\hline
			$32\times 32$	&1.98E-03    &1.99	&3.69E-02	  &0.99		  &3.99E-02	  &1.00   \\
			\hline
			$64\times 64$	&4.97E-04    &2.00	&1.85E-02	  &1.00   &2.00E-02	&1.00  \\
			\hline
			$128\times 128$	&1.24E-04    &2.00	&9.24E-03  &1.00  	&1.00E-02	  &1.00    \\
			\hline
		\end{tabular}
	}
		\subtable[$\alpha_1:\alpha_2=1:1000$]{
		\begin{tabular}{p{1.15cm}<{\centering}|p{1.2cm}<{\centering}|p{0.45cm}<{\centering}|p{1.2cm}<{\centering}|p{0.45cm}<{\centering}|p{1.2cm}<{\centering}|p{0.45cm}<{\centering}}
			\hline   
			\multirow{2}{*}{mesh}
			&\multicolumn{2}{c|}{$\frac{\lVert u-u_{h}\rVert_0}{\lVert u\rVert_0}$ }&\multicolumn{2}{c|}{$\frac{\lVert \bm{q}-\bm{q}_h \rVert_0}{\lVert \bm{q}\rVert_0}$}&\multicolumn{2}{c}{$\frac{\lVert \nabla u-\nabla_h u_h \rVert_0}{\lVert \nabla u\rVert_0}$}\cr\cline{2-7}  
			&error&order&error&order&error&order\cr  
			\cline{1-7}
			$8\times 8$	    &3.15E-02    &	   --  &1.44E-01	  &    --  &1.58E-01	  &   --   \\
			\hline
			$16\times 16$	 &8.01E-03   &1.98	&7.33E-02	  &0.97	 &7.97E-02  &0.99\\
			\hline
			$32\times 32$	&2.01E-03    &1.99	&3.69E-02	  &0.99	    &4.00E-02	  &1.00  \\
			\hline
			$64\times 64$	&5.04E-04    &2.00	&1.85E-02	  &1.00	 &2.00E-02	  &1.00  \\
			\hline
			$128\times 128$	&1.26E-04    &2.00	&9.24E-03  &1.00 &1.00E-02	  &1.00 \\
			\hline
		\end{tabular}
	}
\end{table}

\begin{table}[H]
	\normalsize
	\caption{History of convergence  for Example \ref{linejump}: modified X-HDG scheme (\ref{X-HDGscheme1}) with   $k=2$ }\label{numericalresultline1}
	\centering
	\footnotesize
		\subtable[ $\alpha_1:\alpha_2=1000:1$ ]{
		\begin{tabular}{p{1.15cm}<{\centering}|p{1.2cm}<{\centering}|p{0.45cm}<{\centering}|p{1.2cm}<{\centering}|p{0.45cm}<{\centering}|p{1.2cm}<{\centering}|p{0.45cm}<{\centering}}
			\hline   
			\multirow{2}{*}{mesh}
			&\multicolumn{2}{c|}{$\frac{\lVert u-u_{h}\rVert_0}{\lVert u\rVert_0}$ }&\multicolumn{2}{c|}{$\frac{\lVert \bm{q}-\bm{q}_h \rVert_0}{\lVert \bm{q}\rVert_0}$}&\multicolumn{2}{c}{$\frac{\lVert \nabla u-\nabla_h u_h \rVert_0}{\lVert \nabla u\rVert_0}$}\cr\cline{2-7}  
			&error&order&error&order&error&order\cr  
			\cline{1-7}
			$8\times 8$	    &1.12E-03    &	 --    &4.96E-03	  &  --    &1.56E-02	  &  --   \\
			\hline
			$16\times 16$	 &1.41E-04   &2.99	&1.26E-03	  &1.98  &3.91E-03	  &2.00	 \\
			\hline
			$32\times 32$	&1.77E-05    &2.99	&3.16E-04	  &1.99	  &9.77E-04	  &2.00   \\
			\hline
			$64\times 64$	&2.22E-06    &3.00	&7.93E-05	  &1.99  &2.44E-04	&2.00  \\
			\hline
			$128\times 128$	&2.77E-07    &3.00	&1.98E-05  &2.00  	&6.11E-05	  &2.00   \\
			\hline
		\end{tabular}
	}
		\subtable[$\alpha_1:\alpha_2=1:1000$]{
		\begin{tabular}{p{1.15cm}<{\centering}|p{1.2cm}<{\centering}|p{0.45cm}<{\centering}|p{1.2cm}<{\centering}|p{0.45cm}<{\centering}|p{1.2cm}<{\centering}|p{0.45cm}<{\centering}}
			\hline   
			\multirow{2}{*}{mesh}
			&\multicolumn{2}{c|}{$\frac{\lVert u-u_{h}\rVert_0}{\lVert u\rVert_0}$ }&\multicolumn{2}{c|}{$\frac{\lVert \bm{q}-\bm{q}_h \rVert_0}{\lVert \bm{q}\rVert_0}$}&\multicolumn{2}{c}{$\frac{\lVert \nabla u-\nabla_h u_h \rVert_0}{\lVert \nabla u\rVert_0}$}\cr\cline{2-7}  
			&error&order&error&order&error&order\cr  
			\cline{1-7}
			$8\times 8$	    &1.12E-03    &	   --  &4.96E-03	  &    --   &1.56E-02	  &   --   \\
			\hline
			$16\times 16$	 &1.41E-04   &2.99	&1.25E-03	  &1.98	 &3.91E-03  &2.00\\
			\hline
			$32\times 32$	&1.77E-05    &2.99	&3.16E-04	  &1.99    &9.77E-04	  &2.00  \\
			\hline
			$64\times 64$	&2.22E-06    &3.00	&7.93E-05	  &1.99	 &2.44E-04	  &2.00  \\
			\hline
			$128\times 128$	&2.77E-07    &3.00	&1.98E-05  &2.00 &6.11E-05	  &2.00 \\
			\hline
		\end{tabular}
	}
\end{table}

\begin{figure}[ht]\label{linefig}
	\centering
	\begin{minipage}[t]{0.4\textwidth}		
		\includegraphics[height = 5 cm,width=6 cm]{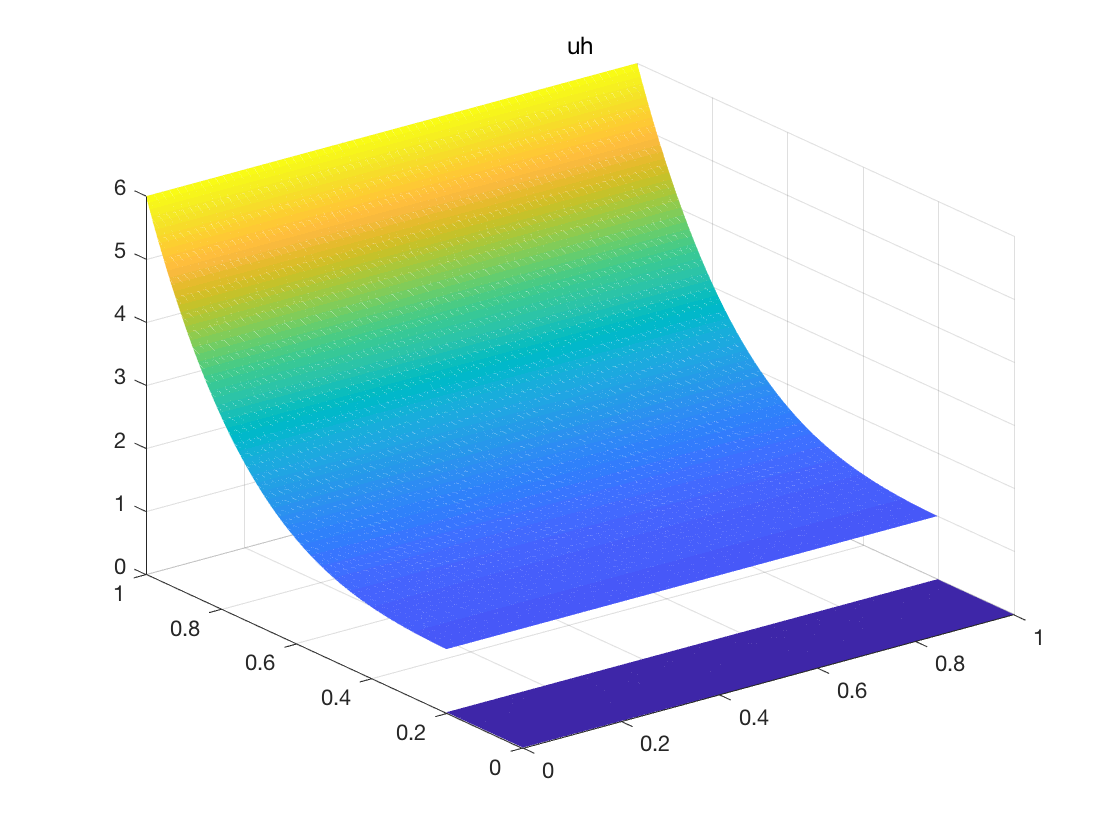} 	
	\end{minipage}
	\tiny\caption{The X-HDG solution at $128\times 128$ mesh with $k=1$    and  $\alpha_1:\alpha_2 = 1000:1$ 
	for Example \ref{linejump}.}	\label{1:10}
\end{figure}

\begin{exmp}\label{polygonalinterface1} \textbf{Polygonal interface with nonhomogeneous  jump conditions}\cite{yangcc2018interface}

	Set $\Omega = [0,2]\times [0,2]$ with a polygonal interface (Figure \ref{polygonaldomain}) 
	\begin{align*}
	\Gamma :=\{(x,y): \phi(x,y) = 0,\ a_0\leq x,y \leq 2-a_0\},
	\end{align*}
	where $\phi(x,y) = (y-(-x+1+a_0))(y-(x-1+a_0))(y-(-x-a_0+3))(y-(x+1-a_0))$, $a_0 = \sqrt{3}/4$. The exact solution to \eqref{pb1}  is given by 
	\begin{align*}
	u(x,y) = \left \{
	\begin{array}{rl}
	sin(x+y)+x^2y^2,  \quad      {\rm outside} \,\Gamma,\\
	e^{(x+y)},  \ \ \ \ \ \ \ \  {\rm inside} \,\Gamma .
	\end{array}
	\right.
	\end{align*}
	For  the coefficient $\alpha$, we take $ \alpha_1 =1000,\alpha_2=1$. 
	We note that the interface jump conditions, derived from the analytical solution, are  non-homogeneous. 
\end{exmp}

\begin{figure}[htbp]	
	\centering	
	\includegraphics[height = 5 cm,width=6 cm]{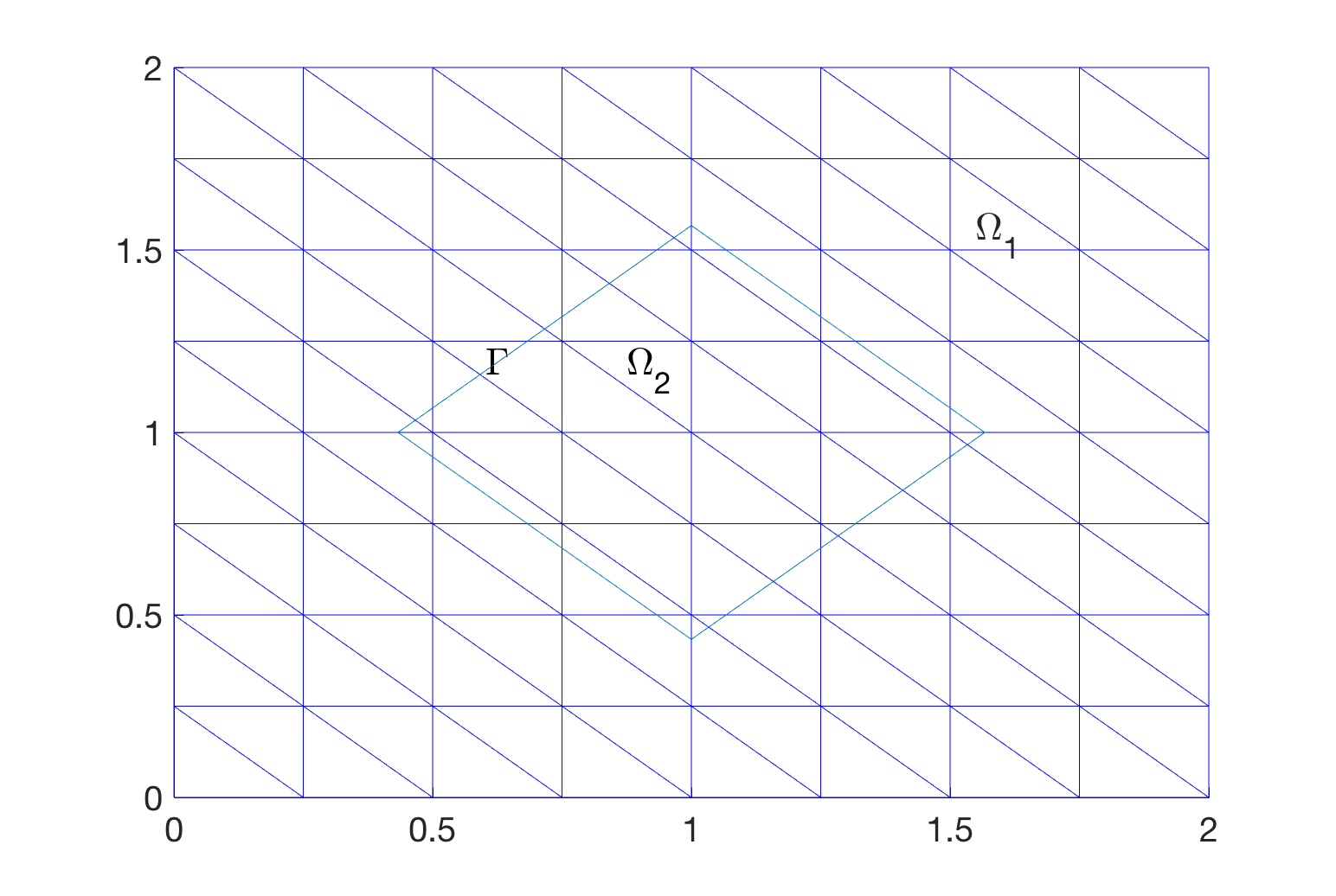} 			
	\tiny\caption{The domain with a polygonal interface: $8\times 8$ mesh.}\label{polygonaldomain}
\end{figure}

\begin{table}[H]
	\normalsize
	\caption{History of convergence for Example \ref{polygonalinterface1}:  X-HDG scheme (\ref{X-HDGscheme}) %,  $\alpha_1:\alpha_2=1000:1$.
	}\label{polygonaltable12}
	\centering
	\footnotesize
				\subtable[ $k =1$]{
		\begin{tabular}{p{1.15cm}<{\centering}|p{1.2cm}<{\centering}|p{0.45cm}<{\centering}|p{1.2cm}<{\centering}|p{0.45cm}<{\centering}|p{1.2cm}<{\centering}|p{0.45cm}<{\centering}}
			\hline   
			\multirow{2}{*}{mesh}
			&\multicolumn{2}{c|}{$\frac{\lVert u-u_{h}\rVert_0}{\lVert u\rVert_0}$ }&\multicolumn{2}{c|}{$\frac{\lVert \bm{q}-\bm{q}_h \rVert_0}{\lVert \bm{q}\rVert_0}$}&\multicolumn{2}{c}{$\frac{\lVert \nabla u-\nabla_h u_h \rVert_0}{\lVert \nabla u\rVert_0}$}\cr\cline{2-7}  
			&error&order&error&order&error&order\cr  
			\cline{1-7}
			$8\times 8$	    &1.09E-02   & -- 	&6.15E-02	  &	  --  &6.43E-02	  &   --    \\
			\hline
			$16\times 16$	 &2.87E-03    &1.93	&3.06E-02	  &1.01		 &3.21E-02  &1.00 \\
			\hline
			$32\times 32$	&7.30E-04    &1.97	&1.60E-02	  &0.94   &1.63E-02	  &0.97   \\
			\hline
			$64\times 64$	&1.84E-04   &1.99	  &8.00E-03  &0.99	 &8.12E-03	  &1.00   \\
			\hline
			$128\times 128$	&4.60E-05    &2.00	&4.00E-03  &1.00 &4.05E-03	  &1.00	\\
			\hline
		\end{tabular}
	}
			\subtable[ $k =2$]{
		\begin{tabular}{p{1.15cm}<{\centering}|p{1.2cm}<{\centering}|p{0.45cm}<{\centering}|p{1.2cm}<{\centering}|p{0.45cm}<{\centering}|p{1.2cm}<{\centering}|p{0.45cm}<{\centering}}
			\hline   
			\multirow{2}{*}{mesh}
			&\multicolumn{2}{c|}{$\frac{\lVert u-u_{h}\rVert_0}{\lVert u\rVert_0}$ }&\multicolumn{2}{c|}{$\frac{\lVert \bm{q}-\bm{q}_h \rVert_0}{\lVert \bm{q}\rVert_0}$}&\multicolumn{2}{c}{$\frac{\lVert \nabla u-\nabla_h u_h \rVert_0}{\lVert \nabla u\rVert_0}$}\cr\cline{2-7}  
			&error&order&error&order&error&order\cr  
			\cline{1-7}
			$8\times 8$	    &3.39E-04    &--	&1.58E-03	  &--	  &8.21E-03	  &--	   \\
			\hline
			$16\times 16$	 &4.24E-05    &3.00	&3.93E-04	  &2.00  &2.10E-03	  &1.97	 \\
			\hline
			$32\times 32$	&5.65E-06    &2.91	&1.07E-04  &1.89 &5.51E-04	&1.93  \\
			\hline
			$64\times 64$	&7.11E-07    &2.99	&2.67E-05  &1.99 	&1.38E-04	  &1.99   \\
			\hline
			$128\times 128$	&8.90E-08    &3.00	&6.67E-06  &2.00 	&3.46E-05	  &2.00   \\
			\hline
		\end{tabular}
	}
\end{table}

\begin{table}[H]
	\normalsize
	\caption{History of convergence for Example \ref{polygonalinterface1}:  modified X-HDG scheme (\ref{X-HDGscheme1}) }
	\label{polygonaltable1}
	\centering
	\footnotesize
	\subtable[ $k =1$]{
			\begin{tabular}{p{1.15cm}<{\centering}|p{1.2cm}<{\centering}|p{0.45cm}<{\centering}|p{1.2cm}<{\centering}|p{0.45cm}<{\centering}|p{1.2cm}<{\centering}|p{0.45cm}<{\centering}}
			\hline   
			\multirow{2}{*}{mesh}
			&\multicolumn{2}{c|}{$\frac{\lVert u-u_{h}\rVert_0}{\lVert u\rVert_0}$ }&\multicolumn{2}{c|}{$\frac{\lVert \bm{q}-\bm{q}_h \rVert_0}{\lVert \bm{q}\rVert_0}$}&\multicolumn{2}{c}{$\frac{\lVert \nabla u-\nabla_h u_h \rVert_0}{\lVert \nabla u\rVert_0}$}\cr\cline{2-7}  
			&error&order&error&order&error&order\cr  
			\cline{1-7}
		$16\times 16$	    &2.85E-03   & -- 	&3.28E-02	  &	  --  &4.34E-02	  &   --   \\
		\hline
		$32\times 32$	 &7.84E-04    &1.86	&1.71E-02	  &0.94		 &2.43E-02  &0.84 \\
		\hline
		$64\times 64$	&1.97E-04    &1.99	&8.58E-03	  &0.99   &1.22E-02	  &0.98  \\
		\hline
		$128\times 128$	&4.93E-05   &2.00	  &4.29E-03  &1.00	 &6.14E-03	  &0.99   \\
		\hline
		$256\times 256$	&1.24E-05    &2.00	&2.15E-03  &1.00 &3.06E-03	  &1.00 \\
		\hline
		\end{tabular}
	}
			\subtable[ $k =2$]{
			\begin{tabular}{p{1.15cm}<{\centering}|p{1.2cm}<{\centering}|p{0.45cm}<{\centering}|p{1.2cm}<{\centering}|p{0.45cm}<{\centering}|p{1.2cm}<{\centering}|p{0.45cm}<{\centering}}
			\hline   
			\multirow{2}{*}{mesh}
			&\multicolumn{2}{c|}{$\frac{\lVert u-u_{h}\rVert_0}{\lVert u\rVert_0}$ }&\multicolumn{2}{c|}{$\frac{\lVert \bm{q}-\bm{q}_h \rVert_0}{\lVert \bm{q}\rVert_0}$}&\multicolumn{2}{c}{$\frac{\lVert \nabla u-\nabla_h u_h \rVert_0}{\lVert \nabla u\rVert_0}$}\cr\cline{2-7}  
			&error&order&error&order&error&order\cr  
			\cline{1-7}
			$16\times 16$	    &4.75E-05    &--	&4.08E-04	  &--	  &2.58E-03	  &--	   \\
			\hline
			$32\times 32$	 &6.60E-06    &2.85	&1.10E-04	  &1.88  &7.09E-04	  &1.87	 \\
			\hline
			$64\times 64$	&8.36E-07    &2.98	&2.77E-05  &2.00 &1.79E-04	&1.99  \\
			\hline
			$128\times 128$	&1.05E-07    &2.99	&6.92E-06  &2.00 	&4.49E-05	  &2.00   \\
			\hline
			$256\times 256$	&1.32E-08    &2.99	&1.49E-06  &2.00 	&1.12E-05	  &2.00   	\\
			\hline
		\end{tabular}
	}
\end{table}

Tables \ref{polygonaltable12}-\ref{polygonaltable1} give   the  numerical results obtained by the X-HDG scheme (\ref{X-HDGscheme}) and the modified X-HDG scheme  (\ref{X-HDGscheme1}) with $k=1,2$, and  Figure  \ref{polygonal1} shows 
the numerical solution $u_h$  at $128\times 128$ mesh with $k=1$. We can see that both of the schemes are of optimal convergence rates for the potential and flux approximations for all cases.

\begin{figure}[htbp]	
	\centering	
	\includegraphics[height = 5 cm,width=6.5 cm]{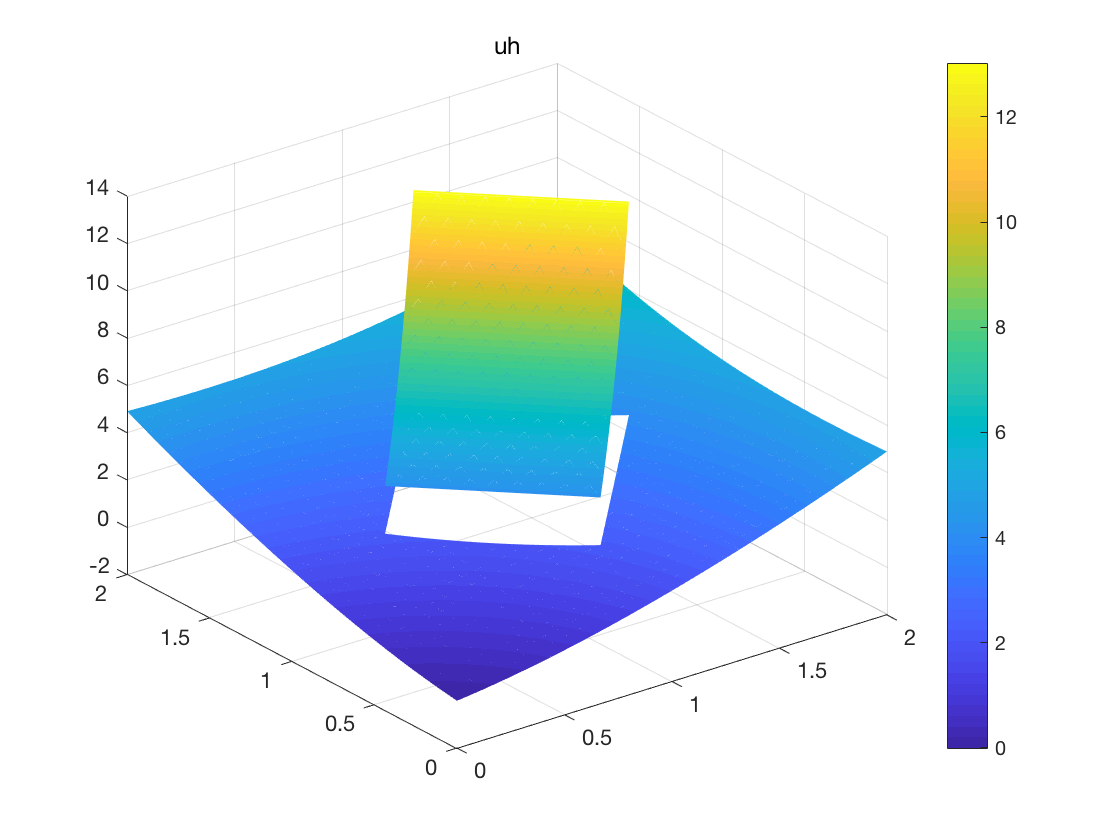} 			
	\includegraphics[height = 5 cm,width=6.5 cm]{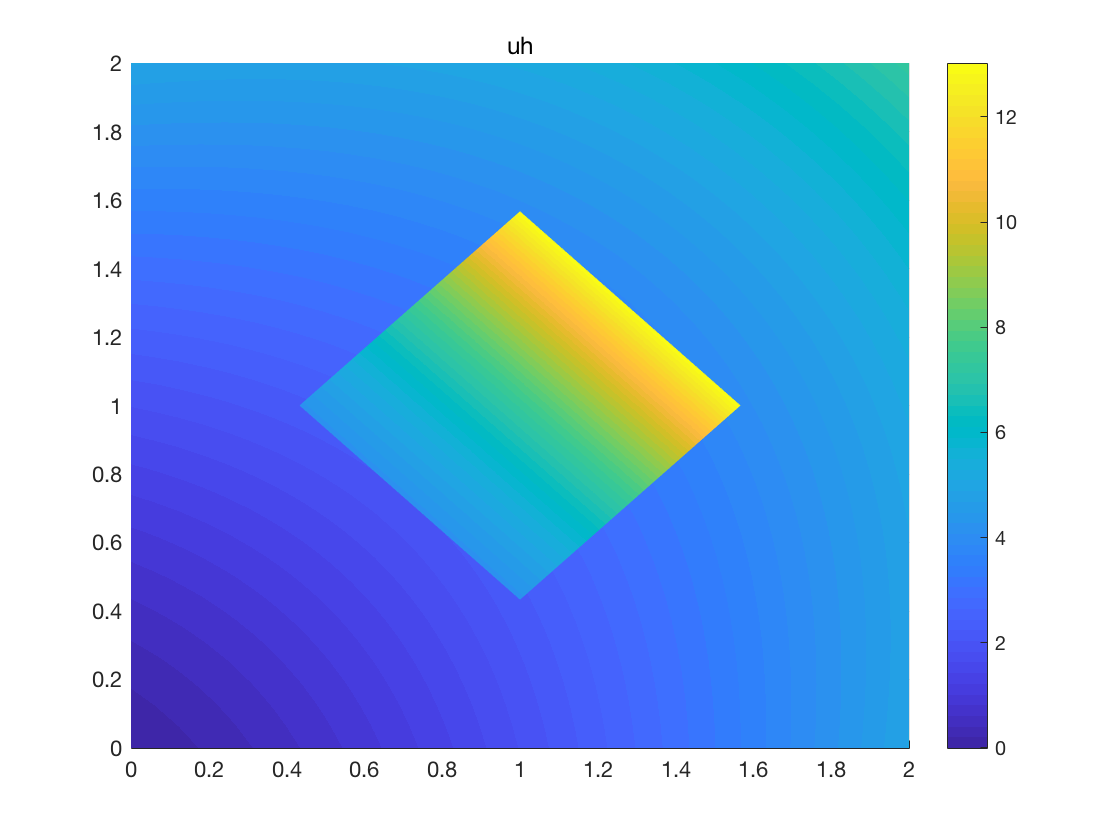} 			
	\tiny\caption{The X-HDG solution with $k=1$ for Example \ref{polygonalinterface1}.}\label{polygonal1}
\end{figure}

\section{Conclusions}
In this paper, we have proposed  two    arbitrary order  eXtended HDG methods for  the two and three dimensional second order elliptic interface problems.  Optimal error estimates have been derived for the flux and potential  approximations  without requiring   ``sufficiently large” stabilization parameters in the schemes.  Numerical experiments have  verified the theoretical results. 

In the future, we shall extend the eXtended HDG methods  to the Stokes and Brinkman interface problems, which have   important applications in the simulation of flow in porous media.

\footnotesize
%\bibliographystyle{plain}
%\bibliography{yihui_reference_papers,yihui_reference_books}

\end{document}